\documentclass{amsart}
\usepackage{mathrsfs}
\usepackage{amssymb,latexsym,amsmath,amsthm,enumitem,mathrsfs}
\usepackage[margin=1in]{geometry}
\usepackage{color}
\usepackage{stmaryrd}
\usepackage{mathrsfs}
\usepackage{lineno}
\usepackage{bbm}   
\usepackage{scalerel,stackengine}
\usepackage{ esint }
\geometry{letterpaper}                   
\usepackage{graphicx}
\usepackage{amssymb}
\newtheorem{theorem}{Theorem}
\newtheorem{lemma}{Lemma}






\newcommand{\R}{\mathbb R}

\newcommand{\Z}{\mathbb Z}
\newcommand{\C}{\mathbb C}
\newcommand{\N}{\mathbb N}
\newcommand{\T}{\mathbb T}

\newcommand{\w}{\omega}

\newcommand{\e}{\varepsilon}
\newcommand{\g}{\gamma}
\newcommand{\p}{\varphi}
\newcommand{\s}{\psi}
\newcommand{\z}{\zeta}

\renewcommand{\a}{\alpha}
\renewcommand{\b}{\beta}
\renewcommand{\d}{\delta}

\renewcommand{\P}{\mathbb{P}}

\newcommand{\mc}{\mathcal}
\newcommand{\mb}{\mathbb}

\def\set4{\mathcal I}
\def\tup14{(1,2,3,4)}

\newcommand\vwidehat[1]{\arraycolsep=0pt\relax%
\begin{array}{c}
\stretchto{
  \scaleto{
    \scalerel*[\widthof{\ensuremath{#1}}]{\kern-.5pt\bigwedge\kern-.5pt}
    {\rule[-\textheight/2]{1ex}{\textheight}} 
  }{\textheight} %
}{0.5ex}\\           
#1\\                 
\rule{-1ex}{0ex}
\end{array}
}
\newtheorem*{comm*}{Comment}
\newtheorem*{rmk}{Remark}
\newtheorem{definition}{Definition}
\newtheorem*{lemma*}{Lemma}
\newtheorem*{theorem*}{Theorem}
\newtheorem{corollary}{Corollary}
\newtheorem{thm}{Theorem}

\newtheorem{conj}[theorem]{Conjecture}
\newtheorem{cor}[thm]{Corollary}
\newtheorem{proposition}{Proposition}

\usepackage{ bbold }
\usepackage{bbm}   
\usepackage{scalerel,stackengine}
\stackMath
\newcommand\widecheck[1]{%
\savestack{\tmpbox}{\stretchto{%
  \scaleto{%
    \scalerel*[\widthof{\ensuremath{#1}}]{\kern-.6pt\bigwedge\kern-.6pt}%
    {\rule[-\textheight/2]{1ex}{\textheight}}
  }{\textheight}%
}{0.5ex}}%
\stackon[1pt]{#1}{\scalebox{-1}{\tmpbox}}%
}
\usepackage{ esint }

\usepackage{enumitem}

\newcommand{\supp}{\mathrm{supp}}

\newtheorem{prop}[theorem]{Proposition}

\begin{document}

\author{Larry Guth}
\address{Department of Mathematics\\
Massachusetts Institute of Technology\\
Cambridge, MA 02142-4307, USA}
\email{lguth@math.mit.edu}

\author{Dominique Maldague}
\address{Department of Mathematics\\
Massachusetts Institute of Technology\\
Cambridge, MA 02142-4307, USA}
\email{dmal@mit.edu}

\keywords{decoupling inequalities, superlevel set}
\subjclass[2020]{42B15, 42B20}

\date{}

\title{Small cap decoupling for the moment curve in $\R^3$ }
\maketitle

\begin{abstract}
    We prove sharp small cap decoupling estimates for the moment curve in $\R^3$. Our formulation of the small caps is motivated by a conjecture about $L^p$ estimates for exponential sums from \cite{smallcap}. 
\end{abstract}

\section{Introduction} 

We use high/low frequency methods to prove small cap decoupling inequalities for the moment curve $\mc{M}^3=\{(t,t^2,t^3):\,t\in[0,1]\}$ in $\R^3$. We begin by describing the problem and our results in terms of exponential sums. The motivation for this paper is to prove Conjecture 2.5 with $n=3$ from \cite{smallcap}, which we state now. Use the standard notation $e(t)=e^{2\pi i t}$.

\begin{conj}\label{conj} For each $N\ge 1$, $0\le \sigma\le 2$, and $s\ge 1$, 
\[  \int_{[0,1]^2\times[0,\frac{1}{N^{\sigma}}]}|\sum_{k=1}^Ne(kx_1+k^2x_2+k^3x_3)|^{2s}dx\le C_\e N^\e  \Big[N^{s-\sigma}+N^{2s-6}\Big] . \]
\end{conj}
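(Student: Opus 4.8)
\emph{Plan of proof.} Write $D_N(y)=\sum_{k=1}^N e(y_1k+y_2k^2+y_3k^3)$ and $\Omega=[0,1]^2\times[0,N^{-\sigma}]$, so the claim is $\int_\Omega|D_N|^{2s}\lesssim_\e N^\e(N^{s-\sigma}+N^{2s-6})$. I would prove this by the high/low frequency method, but first reduce to a single, critical, exponent. Two inputs are elementary: orthogonality in the $y_1$ variable gives $\int_\Omega|D_N|^2=N^{1-\sigma}$, and trivially $|D_N|\le N$. The claim is then that it suffices to prove the critical bound
\[
\int_\Omega |D_N(y)|^{12-2\sigma}\,dy\ \lesssim_\e\ N^{6-2\sigma+\e}.
\]
By Chebyshev this is equivalent to the superlevel-set estimate $|\{y\in\Omega:|D_N(y)|>\alpha\}|\lesssim_\e N^\e\,N^{6-2\sigma}\alpha^{2\sigma-12}$ on the range $N^{1/2}\le\alpha\le N$; combining it with the $L^2$ bound (which reads $|\{|D_N|>\alpha\}\cap\Omega|\le N^{1-\sigma}\alpha^{-2}$ and is the better estimate precisely for $\alpha\le N^{1/2}$) and with $|D_N|\le N$, and integrating against $2s\alpha^{2s-1}$, recovers the full conjecture for every $s\ge1$: the small superlevels produce $N^{s-\sigma}$ when $s\le 6-\sigma$, the large superlevels produce $N^{2s-6}$ when $s\ge 6-\sigma$, and the two regimes meet at the critical exponent $s=6-\sigma$, where both equal $N^{6-2\sigma}$ and the crossover in $\alpha$ sits at level $N^{1/2}$.

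\emph{The high/low iteration.} For the critical bound I would run the high/low scheme, in the spirit of the arguments for the parabola and the cone, but carried out on the slab $\Omega$ rather than a ball. Partition $\{1,\dots,N\}$ into blocks defining caps $\theta$ at a scale $\l$, set $D_\theta=\sum_{k\in\theta}e(y\cdot(k,k^2,k^3))$ and $g=\sum_\theta|D_\theta|^2$. The Fourier support of each $|D_\theta|^2$ clusters near the tangent direction $(1,2k_\theta,3k_\theta^2)$ of $\mathcal{M}^3$ through the origin, so $\widehat g$ concentrates near the cone generated by these tangent lines; in particular it has a genuine low-frequency part. Split $g=g_{\mathrm{low}}+g_{\mathrm{high}}$ at a frequency scale $\rho\ge N^\sigma$ (so that enlarging $\Omega$ by $\rho^{-1}$ in the $y_3$-direction is harmless), and note that pointwise on $\{|D_N|>\alpha\}$ either $g_{\mathrm{low}}\gtrsim\alpha^2$ or the complementary piece $|D_N|^2-g_{\mathrm{low}}$, whose Fourier support is bounded away from the origin, is $\gtrsim\alpha^2$.

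\emph{The two cases and the recursion.} In the high case I expect a reverse square function/orthogonality estimate: the surviving frequency differences $(k,k^2,k^3)-(k',k'^2,k'^3)$ are separated enough to be resolved by the dual box of $\Omega$, and together with the curvature and torsion of $\mathcal{M}^3$ this should give the desired $L^{12-2\sigma}(\Omega)$ control of the high part with only a subpolynomial loss. In the low case one recurses: $g_{\mathrm{low}}$ should be comparable to a square function built from coarser caps, and since $\mathcal{M}^3$ must be rescaled twice before it flattens, at the coarsest scales the problem degenerates to the parabola (the moment curve in $\R^2$), where the analogous estimate — quadratic Vinogradov or the known parabola small-cap bounds — is available as a black box; the base case of the whole scheme at $\sigma=0$ is cubic Vinogradov $\int_{\T^3}|D_N|^{12}\lesssim_\e N^{6+\e}$, which one may invoke or reprove in the same framework. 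Iterating over $\sim\log N$ dyadic scales with a loss of at most $N^{\delta}$ per step and taking $\delta$ small yields the $N^\e$.

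\emph{Main obstacle.} The delicate part is not any single inequality but organizing the recursion so that the exponents close. One must simultaneously track the cap scale $\l$, the \emph{two} curvature scales of $\mathcal{M}^3$, and the slab thickness $N^{-\sigma}$; the geometric lemma comparing $g_{\mathrm{low}}$ to coarser square functions has to be calibrated against $\sigma$, the pruning at each scale must be compatible with the $\Omega$-restriction, and one has to verify that after $\log N$ iterations the output is exactly $N^{6-2\sigma}$ at the critical exponent and not an extra power of $N$. I expect this multi-scale bookkeeping, together with isolating the correct low-frequency (parabola) mechanism, to be the heart of the proof.
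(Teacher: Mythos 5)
Your reduction to a single critical exponent is sound and in fact parallels the paper: with $R=N^{3-\sigma}$ and $R^{\b}=N$ one has $\b=\frac{1}{3-\sigma}$, so the paper's critical exponent $p_c=6+\frac{2}{\b}=12-2\sigma$ is exactly your $2s=12-2\sigma$, and the layer-cake interpolation against $\int_\Omega|D_N|^2=N^{1-\sigma}$ and $|D_N|\le N$ does recover both terms $N^{s-\sigma}+N^{2s-6}$ up to $N^\e$ (minor slip: for $\a\le N^{1/2}$ it is the trivial bound $|\{|D_N|>\a\}\cap\Omega|\le|\Omega|=N^{-\sigma}$, not the $L^2$ bound, that is the stronger one, but either input yields $N^{s-\sigma}$ there). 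The paper likewise reduces everything to a superlevel-set estimate at $p_c$, so up to this point you are on the paper's track.

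The genuine gap is in the high case, which is where the real content lies. You correctly note that the Fourier transform of the square function concentrates near the cone of tangent lines, but you then propose to control the high part by ``a reverse square function/orthogonality estimate'' coming from separation of the frequency differences against the dual box of $\Omega$ plus ``curvature and torsion.'' Separation arguments of that kind give only $L^2$- or $L^4$-type information; at exponent $12-2\sigma\in[8,12]$ they are nowhere near sufficient, and no amount of multiscale bookkeeping (which you flag as the main obstacle) can close the exponents without a much stronger input on the high part. The paper's crucial new ingredient at precisely this point is small cap decoupling for the cone, Theorem \ref{conesmallcapthm} from \cite{ampdep}: after an explicit affine map the high-frequency supports of the pieces $|f_{\g_k}^{k+1}|^2$ land in distinct conical small caps (Propositions \ref{geo1}--\ref{geo3}), and one applies cone small-cap decoupling (Lemmas \ref{high1} and \ref{high2}) or canonical $L^6$ cone decoupling (Lemma \ref{high3}), at an exponent $3+p$ tuned using the locally constant property so as to hit the cone theorem's critical exponent; moreover, to even reach the square function one needs the broad/narrow reduction together with a local trilinear restriction estimate for $\mc{M}^3$ (Proposition \ref{trirestprop}), the amplitude pruning of wave packets, and cylindrical decoupling over $\P^1$ (Theorem \ref{cyldec}) to unwind the pruning. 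Your low-case picture is also not how the scheme closes: the recursion does not bottom out in quadratic or cubic Vinogradov (invoking the $\sigma=0$ case gives nothing for $\sigma>0$); the parabola enters only through the cylindrical decoupling used to unwind the pruning, and the low set is finished with plain $L^2$ orthogonality. In short, the missing idea is that the high part must be handled by a (small cap) decoupling theorem for the cone rather than by orthogonality, and without that input the proposed iteration does not close.
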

The $s=1$ and $s=\infty$ versions of this conjecture are easily verified using $L^2$-orthogonality and the triangle inequality, respectively. When $\sigma=0$, this is Viongradov's mean value theorem, solved in three dimensions by Wooley \cite{woocubic} and using decoupling for the moment curve by Bourgain, Demeter, and Guth \cite{bdg}. The case of $\sigma=2$ was proven by Bombieri and Iwaniec \cite{bombiwa} and by Bourgain \cite{smallcapb1} using a different argument. In \cite{smallcap}, they prove a slightly more general statement which implies Conjecture \ref{conj} in the range $0\le \sigma\le\frac{3}{2}$. We prove the following general exponential sum estimate which implies Conjecture \ref{conj} for the full range of $\sigma$. 
\begin{theorem}\label{mainexp}
For each $N\ge 1$, $0\le \sigma\le 2$, interval $H$ of length $\frac{1}{N^\sigma}$, and $s\ge 1$, 
\[ \int_{[0,1]^2\times H}|\sum_{k=1}^N a_k e(kx_1+k^2x_2+k^3x_3)|^{2s}dx\le  C_\e N^\e\big[ N^{s-\sigma}+N^{2s-6}\big] \] 
for any $a_k\in\C$ satisfying $|a_k|\lesssim 1$. 
\end{theorem}
The terms in the upper bound come from two examples. The upper bound $N^{s-\sigma}$ follows from taking random $a_\xi\in\{\pm1\}$, by Khintchine's inequality. The upper bound $N^{2s-6}$ follows from the example $a_\xi=1$ and noting that the integrand is $\gtrsim N^{2s}$ on roughly the box $[0,\frac{1}{N}]\times[0,\frac{1}{N^2}]\times[0,\frac{1}{N^3}]$. Theorem \ref{mainexp} is an estimate for the moments of exponential sums over subsets smaller than the full domain of periodicity (i.e. $N^3$ in the $x_3$-variable). Bourgain investigated examples of this type of inequality in \cite{smallcapb1,smallcapb2}. 

Theorem \ref{mainexp} is a corollary of a small cap decoupling problem for $\mc{M}^3$ which we now describe. For $R\ge 1$, and small cap parameter $\b\in[\frac{1}{3},1]$, consider the anisotropic small cap neighborhood 
\[\mc{M}^3(R^\b,R)=\{(\xi_1,\xi_2,\xi_3): \xi_1\in[0,1],\,|\xi_2-\xi_1^2|\le R^{-{2\b}},\,|\xi_3-3\xi_1\xi_2+2\xi_1^3|\le R^{-1} \}.  \]
This is the anisotropic neighborhood of $\mc{M}^3$ at scale $R^\b$ (for which canonical decoupling for the moment curve applies) plus a vertical interval of length $R^{-1}$. Next we define small caps $\g$, which form a partition of $\mc{M}^3(R^\b,R)$ and are defined precisely in \textsection\ref{blocks}. Each $\g$ has the form 
\begin{equation}\label{momblocksintro}
    \g=\{(\xi_1,\xi_2,\xi_3): lR^{-\b}\le \xi_1<(l+1)R^{-\b},\,|\xi_2-\xi_1^2|\le R^{-2\b},\,|\xi_3-3\xi_1\xi_2+2\xi_1^3|\le R^{-1} \} 
\end{equation}
for some integer $l$, $0\le l<R^\b$. When $\b=\frac{1}{3}$, then $\g$ coincides with canonical $R^{-\frac{1}{3}}\times R^{-\frac{2}{3}}\times R^{-1}$ moment curve blocks. In the range $\frac{1}{3}\le\b\le\frac{1}{2}$, $\g$ is essentially equivalent to the $R^{-1}$-neighborhood of a canonical $R^{-\b}\times R^{-2\b}\times R^{-3\b}$ moment curve block. In the range $\frac{1}{2}\le\b\le 1$, $\g$ looks like a canonical $R^{-\b}\times R^{-2\b}\times R^{-3\b}$ moment curve block plus a vertical $R^{-1}$-interval. In each case, $\g$ has dimensions $R^{-\b}\times R^{-2\b}\times R^{-1}$. Our definition of small caps using the vertical $R^{-1}$ neighborhood is motivated by Theorem \ref{mainexp}, which we explain further in \textsection\ref{imps}. 

The small cap decoupling theorem we obtain is 
\begin{theorem} \label{main} Let $\frac{1}{3}\le \b\le 1$ and $p\ge 2$. Then 
\[     \|f\|_{L^p(\R^3)}^p\le C_\e R^\e(R^{\b(\frac{p}{2}-1)}+R^{\b(p-4)-1})\sum_\g\|f_\g\|_{L^p(\R^3)}^p \]
for any Schwartz function $f:\R^3\to\C$ with Fourier transform supported in $\mc{M}^3(R^\b,R)$.  
\end{theorem}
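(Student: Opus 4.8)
The plan is to prove Theorem~\ref{main} by a high/low frequency iteration on the scale parameter, interpolating between the two extremes $\b=\tfrac13$ (where the small caps are honest moment-curve blocks and the estimate follows from $\ell^p$ decoupling of Bourgain--Demeter--Guth \cite{bdg}) and the trivial flat decoupling. More precisely, I would set up a bootstrap: let $D_\b(R)$ denote the best constant in the claimed inequality, and aim to show $D_\b(R)\lesssim_\e R^\e(R^{\b(p/2-1)}+R^{\b(p-4)-1})$ by running the argument down a sequence of scales $R^{1/2}, R^{1/4},\dots$ (or more naturally by a pruning/broad-narrow decomposition adapted to the anisotropic caps).

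\smallskip

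The core mechanism I would use is the high/low method of the authors: write $|f_\g|^2 = (|f_\g|^2)_{\mathrm{low}} + (|f_\g|^2)_{\mathrm{high}}$, where $(\cdot)_{\mathrm{low}}$ denotes a Fourier truncation to a small ball of radius comparable to $R^{-2\b}$ (or an intermediate scale). Summing the squares $g = \sum_\g |f_\g|^2$, the low part $g_{\mathrm{low}}$ has Fourier support in a small neighborhood of the origin and its superlevel sets can be controlled by an $L^2$ (Plancherel / local constancy) argument together with the geometry of how the caps $\g$ project; the high part is handled by a square-function / bi-orthogonality estimate exploiting that the frequency supports $\g - \g'$ for the high pieces are separated, which after Plancherel gives an $L^4$-type gain. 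On the superlevel set $U_\a = \{|f| > \a\}$, one estimates $\a^p |U_\a|$: where $g_{\mathrm{low}}$ dominates, one reduces to the same decoupling problem at a smaller scale (this is where the induction feeds in, with the worse of the two candidate exponents being reproduced); where $g_{\mathrm{high}}$ dominates, one picks up a favorable power of $R$ from the $L^4$ orthogonality. Balancing the two contributions and tracking the exponents carefully through the iteration should reproduce exactly the two terms $R^{\b(p/2-1)}$ and $R^{\b(p-4)-1}$, which correspond to the random ($\pm1$ coefficient) example and the single-block example respectively, exactly as in the discussion after Theorem~\ref{mainexp}.

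\smallskip

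The main obstacle, I expect, is the \emph{anisotropy} of the caps: each $\g$ has dimensions $R^{-\b}\times R^{-2\b}\times R^{-1}$, so in the regime $\b>\tfrac12$ the third dimension $R^{-1}$ is \emph{larger} than the ``natural'' moment-curve thickness $R^{-3\b}$, i.e. the cap is a genuine block plus a long vertical interval. This means the standard parabolic/moment-curve rescaling that makes high/low arguments clean does not directly apply, and one must either (i) foliate $\g$ into $R^{3\b-1}$ many vertically-stacked canonical blocks and run a two-parameter induction, or (ii) find the right affine-invariant quantity that is preserved. Getting the bookkeeping right so that the vertical thickening contributes the precise power $R^{-1}$ (and not $R^{-3\b}$) in the second term is the delicate point. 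A secondary technical issue is that the wave-packet/locally-constant heuristics must be made rigorous at the level of weighted $L^p$ estimates on superlevel sets, including the usual pigeonholing over dyadic values of $\a$ and over the ``significant'' caps, and controlling the number of iteration steps (logarithmically many) so that the accumulated $R^\e$ losses remain acceptable. Once the anisotropic rescaling and the choice of low-frequency truncation scale are pinned down, the remainder should follow the now-standard template for high/low small-cap decoupling arguments, and Theorem~\ref{mainexp} then follows by a standard conversion from the decoupling inequality to the exponential-sum estimate (periodicity in $x_3$ at scale $N^3$ versus the restricted domain $H$ of length $N^{-\sigma}$).
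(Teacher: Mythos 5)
Your overall architecture (square function $g=\sum_\g|f_\g|^2$, high/low splitting, superlevel sets, pruning, broad--narrow with rescaling, pigeonholing in $\a$) matches the paper's framework, but the mechanism you propose for the high-frequency part is where the argument genuinely breaks. You suggest handling $g_{\mathrm{high}}$ by ``bi-orthogonality'' of the supports $\g-\g'$ plus Plancherel, i.e.\ an $L^4$ gain. That alone cannot reach the critical exponent $p_c=6+\tfrac{2}{\b}$ for the full range $\frac13\le\b\le 1$ (at $\b=\frac13$ one needs the strength of $L^{12}$ moment-curve decoupling). The paper's crucial ingredient, flagged explicitly in the introduction, is that after the pruning one shows (Propositions \ref{geo1}--\ref{geo3}) that the Fourier support of the high part of each intermediate square function $g_k^h$, restricted to a dyadic shell $\{\xi_3\sim r^{-1}\}$ and after an affine map $T$, lies inside \emph{small caps of the cone}; one then applies the small cap cone decoupling theorem of \cite{ampdep} (Theorem \ref{conesmallcapthm}) at exponents $2+\tfrac{2}{\b_1}\in[4,6]$ in Lemma \ref{high2}, and canonical $L^6$ cone decoupling in Lemma \ref{high3} for the coarser scales, choosing the power $p$ in $\a^6|{\rm Br}_\a^K\cap\Lambda_k|\lesssim R^\e (A_\e^{M-k}R^\b)^{-p}\int|g_k^h|^{3+p}$ to hit the cone's critical exponent. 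An $L^4$ orthogonality estimate is only what is used in the single subcase $R^{-\b}\le r_k^{-1}\le R^{-1/2}$ (and even there it is cone small cap decoupling into clusters $\z$ of $r_k^2R^{-1}$ adjacent $\g_k$, not cap-by-cap orthogonality); without the conical structure the exponents do not close in Subcase 1b or in the $\tau_k$-scales.

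Two further points. First, your sketch jumps from ``$|f|>\a$'' to statements about $g_{\mathrm{low}}$/$g_{\mathrm{high}}$ without the step that makes this legitimate: the paper passes through the broad part ${\rm Br}_\a^K$ and a local trilinear restriction estimate for $\mc{M}^3$ (Proposition \ref{trirestprop}), which converts $\a^6$ times the measure of the broad set into $\int (\sum_\g|f_\g|^2*\w_\g)^3$ on a neighborhood; the narrow part is handled by rescaling the moment curve and re-applying the level-set proposition at a coarser scale, not by a self-similar bootstrap of the decoupling constant $D_\b(R)$. Second, your proposed fix for the anisotropy when $\b>\frac12$ (foliating each $\g$ into $R^{3\b-1}$ vertically stacked canonical blocks and recombining) would typically incur a lossy flat-decoupling factor; in the paper the vertical $R^{-1}$ thickness is absorbed directly because it aligns with the $\g'''$-direction of the difference sets, so that the high-frequency supports still sit in conical small caps (with the only technical concession being that the low-frequency cutoff is a cylinder rather than a ball). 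So the delicate point you identify is real, but the resolution is the cone geometry of $\g_k-\g_k$, not a two-parameter induction over stacked blocks.
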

The only other result of this form that we are aware of is the work of Jung in \cite{hjung}, which essentially proves the $\b=\frac{1}{2}$ case of Theorem \ref{main}. The proof of Theorem \ref{main} uses the same framework as the high-low argument from \cite{gmw}. We require a crucial new ingredient, which is small cap decoupling for the cone established in \cite{ampdep}. See \textsection\ref{highsec} for some discussion of the role of small cap decoupling for the cone in the proof of Theorem \ref{main}. In the special case that $\b=\frac{1}{3}$, our argument recovers canonical decoupling for the moment curve in $\R^3$ (first established in \cite{bdg}) using a high-low argument. 

The powers of $R$ in the upper bound of Theorem \ref{main} come from considering two natural sharp examples for the ratio $\|f\|_{p}^p/(\sum_\g\|f_\g\|_p^q)^{p/q}$. The first is the square root cancellation example, where $|f_\g|\sim \chi_{B_{R^{\max(2\b,1)}}}$ for all $\g$ and $f=\sum_\g e_\g f_\g$ where $e_\g$ are $\pm1$ signs chosen (using Khintchine's inequality) so that $\|f\|_{p}^p\sim R^{\b p/2}R^{3\max(2\b,1)}$ and
\[ \|f\|_p^p/(\sum_\g\|f_\g\|_p^p)\gtrsim (R^{\b p/2}R^{3\max(2\b,1)})/(R^{\b }R^{3\max(2\b,1)})\sim R^{\b(\frac{p}{2}-1)}. \]
The second example is the constructive interference example. Let $f_\g=R^{\b+2\b+1}\widecheck{\eta}_\g$ where $\eta_\g$ is a smooth bump function approximating $\chi_\g$. Since $|f|=|\sum_\g f_\g|$ is approximately constant on unit balls and $|f(0)|\sim R^\b$, we have
\[\|f\|_p^p/(\sum_\g\|f_\g\|_p^p)\gtrsim (R^{\b p})/(R^{\b }R^{\b+2\b+1})\sim R^{\b (p-4)-1}. \]

We remark that the arguments in this paper could also be used to analyze the natural problem of small cap decoupling problem with $R^{-1}$ neighborhoods of canonical blocks. That set-up is slightly more complicated than the one considered here since for some choice of parameters $\b$ and $p$, the block example $f=\widecheck{\eta}_\theta$, with $\theta$ a canonical $R^{-\frac{1}{3}}\times R^{-\frac{2}{3}}\times R^{-1}$ block, may also dominate. This leads to extra cases and a more complicated proof that we do not present here. 

An immediate corollary of Theorem \ref{main} is the following general exponential sum estimate. 
\begin{cor}\label{maincor} For each $\frac{1}{3}\le \b\le 1$, $2\le p\le 6+\frac{2}{\b}$, and $r\ge R^{\max(2\b,1)}$, 
\[ |Q_{r}|^{-1}\int_{Q_{r}}|\sum_{\xi\in\Xi}a_\xi e(x\cdot(\xi,\xi^2,\xi^3))|^pdx \lesssim_\e R^{\b\frac{p}{2}+\e}  \]
for any $r$-cube $Q_r$ and any collection $\Xi\subset[0,1]$ with $|\Xi|\sim R^\b$ consisting of $\sim R^{-\b}$-separated points and  $a_\xi\in\C$
with $|a_\xi|\lesssim 1$. 
\end{cor}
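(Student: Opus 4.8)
The plan is to derive Corollary~\ref{maincor} from Theorem~\ref{main} by the standard recipe for converting a small-cap decoupling inequality into an estimate for exponential sums over a cube.

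\begin{proof}[Proof sketch of Corollary~\ref{maincor}]
Set $f(x)=\sum_{\xi\in\Xi}a_\xi e(x\cdot(\xi,\xi^2,\xi^3))$. We want an $L^p$ bound on a cube of side $r\ge R^{\max(2\b,1)}$, so first I would replace the characters $e(x\cdot(\xi,\xi^2,\xi^3))$ by an honest Schwartz function with Fourier support in $\mc{M}^3(R^\b,R)$: fix a smooth bump $\p\ge 0$ with $\p\gtrsim 1$ on $[0,1]^2\times[-1,1]$ (say) and $\widehat{\p}$ supported in a small ball, and consider $F(x)=\p(x/r)f(x)$. Then $\widehat F$ is supported in an $O(1/r)$-neighborhood of the points $(\xi,\xi^2,\xi^3)$, and since $1/r\le R^{-\max(2\b,1)}\le \min(R^{-2\b},R^{-1})$ and the $\xi$ are $R^{-\b}$-separated, this neighborhood is contained in $\mc{M}^3(R^\b,R)$ and each point $(\xi,\xi^2,\xi^3)$ lies in its own small cap $\g$. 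Hence $F_\g(x)=\p(x/r)a_\xi e(x\cdot(\xi,\xi^2,\xi^3))$ for the corresponding $\xi$, and $\|F_\g\|_{L^p(\R^3)}^p\sim |a_\xi|^p\,|Q_r|\lesssim |Q_r|$ because $|a_\xi|\lesssim 1$ and $\p(\cdot/r)$ is an $L^\infty$-normalized bump on $Q_r$ (up to tails, which are harmless by the Schwartz decay of $\widehat\p$).

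Next I would apply Theorem~\ref{main} to $F$. There are $\sim R^\b$ caps $\g$ that are nonempty (one per point of $\Xi$), so
\[
\|F\|_{L^p(\R^3)}^p \le C_\e R^\e\big(R^{\b(\frac p2-1)}+R^{\b(p-4)-1}\big)\sum_\g \|F_\g\|_{L^p(\R^3)}^p
\lesssim_\e R^\e\big(R^{\b(\frac p2-1)}+R^{\b(p-4)-1}\big)\,R^\b\,|Q_r|.
\]
Dividing by $|Q_r|$ and using $\int_{Q_r}|f|^p\lesssim \int_{\R^3}|F|^p=\|F\|_{L^p}^p$ (since $\p(x/r)\gtrsim 1$ on $Q_r$), we get
\[
|Q_r|^{-1}\int_{Q_r}|f|^p\,dx \lesssim_\e R^\e\big(R^{\b\frac p2}+R^{\b(p-3)-1}\big).
\]
The first term is the claimed bound $R^{\b p/2+\e}$. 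It remains to check that the second term does not dominate, i.e. that $R^{\b(p-3)-1}\lesssim R^{\b p/2}$, equivalently $\b(p-3)-1\le \b p/2$, i.e. $\b(\tfrac p2-3)\le 1$, i.e. $p\le 6+\tfrac2\b$. This is exactly the hypothesis $2\le p\le 6+\tfrac2\b$, so in this range the constructive-interference term is absorbed and the bound follows.

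The only genuinely delicate point is the bookkeeping in the first paragraph: verifying that with $r\ge R^{\max(2\b,1)}$ the $O(1/r)$-Fourier-neighborhood of $\{(\xi,\xi^2,\xi^3):\xi\in\Xi\}$ both sits inside $\mc{M}^3(R^\b,R)$ and is separated at the scale of the small caps, so that each character contributes to exactly one $F_\g$ and $\|F_\g\|_p$ is comparable to $|a_\xi|\,|Q_r|^{1/p}$; the tail contributions from the Schwartz decay of $\widehat\p$ must be controlled by a routine (but necessary) pigeonholing/averaging over translates of $Q_r$. Everything after that is a one-line application of Theorem~\ref{main} together with the arithmetic constraint on $p$.
\end{proof}
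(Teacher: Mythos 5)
Your proposal is correct and takes essentially the same route as the paper: multiply by a Schwartz bump adapted to $Q_r$ with Fourier support at scale $r^{-1}$ (which fits inside $\mc{M}^3(R^\b,R)$ precisely because $r\ge R^{\max(2\b,1)}$), apply Theorem \ref{main}, sum over the $\sim R^\b$ occupied caps, and observe that the second term is dominated exactly when $p\le 6+\frac{2}{\b}$. The bookkeeping you flag as delicate is handled in the paper by splitting $\Xi$ into $O(1)$ subsets via the triangle inequality so that each $(\xi,\xi^2,\xi^3)$ has its $r^{-1}$-neighborhood inside a unique cap, which is the same routine fix you describe.
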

Note that the corresponding corollary of canonical decoupling $\mc{M}^3$ only holds in the range $r\ge R^{3\b}$. 

For $a,b>0$, the notation $a\lesssim b$ means that $a\le Cb$ where $C>0$ is a universal constant whose definition varies from line to line, but which only depends on fixed parameters of the problem. Also, $a\sim b$ means $C^{-1}b\le a\le Cb$ for a universal constant $C$, and $a\lesssim_\e b$ means that the implicit constant depends on $\e>0$. 

The paper is organized as follows. We explain the implications of Theorem \ref{main} in \textsection\ref{imps} and give some intuition for the proof of Theorem \ref{main} in \textsection\ref{highsec}. Then in \textsection\ref{tools}, we develop multi-scale high/low frequency tools and lemmas. Some of these tools are very similar to those developed in \cite{gmw}, but the high-frequency analysis uses the geometry of the moment curve and relies on small cap decoupling estimates for the cone recently established in \cite{ampdep}. We use these tools in \textsection\ref{alphapc} to prove a weak (superlevel set) version of Theorem \ref{main} for the critical exponent $p_c=6+\frac{2}{\b}$. Then in \textsection\ref{M3pigeon}, we perform a sequence of pigeonholing steps analogous to those in Section 5 of \cite{gmw} to show that Theorem \ref{main} follows from the superlevel set version with the critical exponent. 

\noindent\textbf{Acknowledgement:} Thank you to Hong Wang for helpful conversations related to this work.  In particular, she suggested to us the idea of using decoupling in place of orthogonality within the high/low method.

LG is supported by a Simons Investigator grant. DM is supported by the National Science Foundation under Award No. 2103249.

\subsection{Implications of Theorem \ref{main}\label{imps}}

\begin{proof}[Corollary \ref{maincor} follows from Theorem \ref{main}]
Let $\phi_{Q_r}$ be a nonnegative Schwartz function satisfying $\phi_{Q_r}\gtrsim 1$ on $Q_r$, $\supp \,\widehat{\phi}_{Q_r}\subset B_{r^{-1}}$, and $\int|\phi_{Q_r}|^p\sim_p |Q_r|$. Then the function 
\[ f(x)=\sum_{\xi\in\Xi}a_\xi e(x\cdot(\xi,\xi^2,\xi^3))\phi_{Q_r}(x) \]
satisfies the hypotheses of Theorem \ref{main}. Using the triangle inequality, we may split the indexing set $\Xi$ into $O(1)$ many subsets $\Xi'$ so that each $\xi\in\Xi'$ is identified with a unique small cap $\g$ which completely contains the $r^{-1}$-neighborhood of $(\xi,\xi^2,\xi^3)$. This is possible because $r\ge R^{\max(2\b,1)}$, so a ball of radius $r^{-1}$ can be completely contained in an $R^{-\b}\times R^{-2\b}\times R^{-1}$ small cap $\g$, whose geometry is described in detail in \textsection\ref{blocks}. Applying Theorem \ref{main} in the range $2\le p\le 6+\frac{2}{\b}$ gives 
\begin{align*}
    \int_{Q_r}|f|^{p}\lesssim_\e R^{\b(\frac{p}{2}-1)+\e}\sum_{\xi\in\Xi}\|a_\xi e(\cdot(\xi,\xi^2,\xi^3))\phi_{Q_R}\|_p^p\sim R^{\b\frac{p}{2}+\e} |Q_r|.
\end{align*}
\end{proof}
\[\]
\begin{proof}[Theorem \ref{mainexp} follows from Theorem \ref{main}]
Begin with the integral on the left hand side of Theorem \ref{mainexp}. Perform the change of variables $(x_1,x_2,x_3)=(\frac{y_1}{N},\frac{y_2}{N^2},\frac{y_3}{N^3})$:
\[ \int_{[0,1]^2\times H}|\sum_{k=1}^N a_k e(x\cdot (k,k^2,k^3))|^{2s}dx=N^{-6} \int_{[0,N]\times[0,N^2]\times N^3H}|\sum_{k=1}^Na_k e(y\cdot (\frac{k}{N},\frac{k^2}{N^2},\frac{k^3}{N^3}))|^{2s}dy.  \]
Using the periodicity of the exponential sum in the first two variables, 
\[\int_{[0,N]\times[0,N^2]\times N^3H}|\sum_{k=1}^Na_k e(y\cdot (\frac{k}{N},\frac{k^2}{N^2},\frac{k^3}{N^3}))|^{2s}dy = N^{-3} \int_{[0,N^3]^2\times N^3H}|\sum_{k=1}^Na_k e(y\cdot (\frac{k}{N},\frac{k^2}{N^2},\frac{k^3}{N^3}))|^{2s}dy . \]
Let $\phi_H$ be a bump function which satisfies $\phi_H\gtrsim 1$ on $[0,N^3]^2\times N^3H$, $\supp\,\widehat{\phi}_H\subset[0,N^{-3}]^2\times[0,N^{\sigma-3}]$, and $\int|\phi_H|^p\sim_p N^{9-\sigma}$. Then \[ \int_{[0,N^3]^2\times N^3H}|\sum_{k=1}^Na_k e(y\cdot (\frac{k}{N},\frac{k^2}{N^2},\frac{k^3}{N^3}))|^{2s}dy\lesssim \int_{\R^3}|\sum_{k=1}^Na_k e(y\cdot (\frac{k}{N},\frac{k^2}{N^2},\frac{k^3}{N^3}))\phi_H(y)|^{2s}dy. \]
Then apply Theorem \ref{main} with $p=2s$, $R=N^{3-\sigma}$, and $\b$ defined by $R^\b=N$, which means that $\b=\frac{1}{3-\sigma}\in[\frac{1}{3},1]$ (since $\sigma\in[0,2]$), giving
\[ \int_{\R^3}|\sum_{k=1}^Na_k e(y\cdot (\frac{k}{N},\frac{k^2}{N^2},\frac{k^3}{N^3}))\phi_H(y)|^{2s}dy\lesssim_\e R^\e[R^{\b(s-1)}+R^{\b(2s-4)-1}]\sum_{k=1}^N|a_k|^{2s}\|\phi_H\|_{2s}^{2s} . \]
Incorporate the extra factors from the substitution and the periodicity steps, and use the assumption $|a_k|\lesssim 1$ and the property $\|\phi_H\|_{2s}^{2s}\sim_s N^{9-\sigma}$ to get the bound
\[ \int_{[0,1]^2\times H}|\sum_{k=1}^N a_k e(x\cdot (k,k^2,k^3))|^{2s}dx\lesssim_\e N^{-9}R^\e[R^{\b(s-1)}+R^{\b(2s-4)-1}]NN^{9-\sigma}.\]
Finally, using the relationship between $R$, $N$, $\b$, and $\sigma$, the upper bound simplifies to 
\[ N^\e[N^{(s-1)}+N^{(2s-4)-(3-\sigma)}]N^{1-\sigma}= N^\e[N^{s-\sigma}+N^{2s-6}] ,\]
as desired. 
\end{proof}

\subsection{Some intuition behind the proof of Theorem \ref{main}\label{highsec}}

Here we describe one of the cases from the proof of Theorem \ref{main} which illustrates the role of small cap decoupling for the cone. 
After a series of standard reductions which are also used in \cite{gmw}, to prove Theorem \ref{main}, it suffices to show that
\begin{equation}\label{simpuppbd} \a^{6+\frac{2}{\b}}|\{x\in B_{R^{\max(2\b,1)}}:\a\le|f(x)|\}|\lesssim_\e R^\e R^{2\b+1}\sum_\g\|f_\g\|_2^2 \end{equation}
where $\a>0$, $B_{R^{\max(2\b,1)}}$ is a ball of radius $R^{\max(2\b,1)}$, and we have the extra assumption that $\|f_\g\|_\infty\lesssim 1$ for all $\g$. The spatial localization to a ball of radius $R^{\max(2\b,1)}$ is natural since this is the smallest size of ball that contains an $R^\b\times R^{2\b}\times R$ wave packet dual to each $\g^*$. Consider the special case of maximal $\a$, so $\a\sim \#\g\sim R^\b$, and call $\{x\in B_{R^{\max(2\b,1)}}:R^\b\sim |f(x)|\}$ the high set $H$. Using a local trilinear restriction estimate for the moment curve, recorded below in Proposition \ref{trirestprop}, we show roughly that  
\[ (R^\b)^6|H|\lesssim \int_{\mc{N}_{R^\b}(H)}|\sum_\g|f_\g|^2(x)|^3dx.  \]
Suppose that on most of $\mc{N}_{R^\b}(H)$, $\sum_\g|f_\g|^2(x)\lesssim |\sum_\g|f_\g|^2*\widecheck{\eta}_{>\frac{1}{2}R^{-\b}}(x)|$ where $\eta_{>\frac{1}{2}R^{-\b}}$ is a smooth approximation of the characteristic function of the set $\frac{1}{2}R^{-\b}\le |\xi|\le 2R^{-\b}$. Each $\widehat{|f_\g|^2}$ is supported in $\g-\g$. Writing $\g(t)=(t,t^2,t^3)$ and using the definition \eqref{momblocksintro}, the support of each $\widehat{|f_\g|^2}\eta_{>\frac{1}{2}R^{-\b}}$ is approximately contained in 
\[    \{A\g'(lR^{-\b})+B\g''(lR^{-\b})+C\g'''(lR^{-\b}):\frac{1}{2}R^{-\b}\le A\le R^{-\b},|B|\le R^{-2\b},|C|\le R^{-1}\}. \]
In \textsection\ref{blocks}, we show that (1) these sets are disjoint for distinct $l\in\{1,\ldots, R^\b\}$, and (2) each of the above sets is contained in the $R^{-\b}$-dilation of a conical small cap. Note that this is not exactly true when $\b=1$, which is why we use use cylinders instead of balls to cut out the low set in the actual argument. Ignoring this technicality, this means that we may apply a small cap decoupling theorem for the cone to bound the integral 
\[ \int_{\mc{N}_{R^\b}(H)}|\sum_\g|f_\g|^2*\widecheck{\eta}_{>\frac{1}{2}R^{-\b}}|^3. \]
Finally, the functions $\sum_\g|f_\g|^2$ and $|\sum_\g|f_\g|^2*\widecheck{\eta}_{>\frac{1}{2}R^{-\b}}|$ are roughly constant on $R^\b$ balls, which implies that for any $p\ge 0$, we have
\[  (R^\b)^6|H|\lesssim \frac{1}{R^{\b p}}\int_{\mc{N}_{R^\b}(H)}|\sum_\g|f_\g|^2*\widecheck{\eta}_{>\frac{1}{2}R^{-\b}}(x)|^{3+p}dx  . \]
This is an important observation since we have more factors of $R^\b$ in the denominator on the right hand side and we may choose $p$ so that $3+p$ is the critical exponent for the scale of conical small caps that we have, thus using the full strength of the small cap decoupling theorem for the cone. Our argument shows that each of these steps can be sharp, which leads to the upper bound \eqref{simpuppbd}.

\section{Tools for the high/low approach to $\mc{M}^3$ \label{tools}}

We perform a high/low frequency analysis of square functions at various scales, incorporating the pruning process for wave packets analogous to \cite{gmw}. We develop language to discuss canonical caps and small caps of various scales, associated wave packets, and averaged versions of functions which satisfy useful locally constant properties. Then we write a series of key lemmas to analyze the high/low frequency portions of averaged, pruned square functions at various scales.

Begin by fixing some notation. Fix a ball $B_{R^{\max(2\b,1)}}$ of radius $R^{\max(2\b,1)}$. The parameter $\a>0$ describes the superlevel set 
\[ U_\a=\{x\in B_{R^{\max(2\b,1)}}:|f(x)|\ge \a\}.\]

Fix $\b\in[\frac{1}{2},1]$ and $R\ge 2$. Let $\e>0$ be given and consider scales $R_k\in 8^\N$ closest to $R^{k\e}$, for $R^{-1/3}\le R_k^{-1/3}\le 1$, and scales $r_k\in 2^\N$ closest to $R^{\frac{1}{3}+k\e}$, for $R^{-\b}\le r_k^{-1}\le R^{-1/3}$. Let $N$ distinguish the index so that $R_N$ is closest to $R$. Since $R$ and $R_N$ differ at most by a factor of $R^\e$, we will ignore the distinction between $R_N$ and $R$ in the rest of the argument. Similarly, assume that $r_M=R^\b$ for some index $M\in\N$. The relationship between the parameters is
\[ 1=R_0\le R_k^{\frac{1}{3}}\le R_{k+1}^{\frac{1}{3}}\le R_N^{\frac{1}{3}}=r_0\le r_m\le r_{m+1}\le r_M=R^\b. \]

Next we fix notation for moment curve blocks and small caps of various sizes. For the explicit definitions, see \textsection\ref{blocks} below. 
\begin{enumerate}
    \item $\{\g\}$ are small caps associated to $R^\b$ and $R$, meaning $\sim R^{-\b}\times R^{-2\b}\times R^{-3\b}$ moment curve blocks plus the set $\{(0,0,z):|z|\le R^{-1}\}$. 
    \item $\{\g_k\}$ are small caps associated to $r_k$ and $R$ (so $\sim r_k^{-1}\times r_k^{-2}\times r_k^{-3}$ moment curve blocks plus $\{(0,0,z):|z|\le R^{-1}\}$).
    \item $\{\theta\}$ are canonical $\sim R^{-\frac{1}{3}}\times R^{-\frac{2}{3}}\times R^{-1}$ moment curve blocks. 
    \item $\{\tau_k\}$ are canonical $R_k^{-\frac{1}{3}}\times R_k^{-\frac{2}{3}}\times R_k^{-1}$ moment curve blocks.  
\end{enumerate}
The specific definitions of $\g,\g_k,\theta,\tau_k$ in \textsection\ref{blocks} provide the additional property that if $\g_k\cap\g_{k+m}\not=\emptyset$, then $\g_{k+m}\subset\g_k$ (and similarly for the $\tau_k$).

We assume throughout this section (actually until \textsection\ref{M3pigeon}) that the $f_\g$ satisfy the extra condition that
\begin{equation}\label{unihyp} 
\frac{1}{2}\le \|f_\g\|_{L^\infty(\R^3)}\le 2\qquad\text{or}\qquad \|f_\g\|_{L^\infty(\R^3)}=0. \end{equation}

\subsection{A pruning step \label{prusec}}

Here we define wave packets for blocks $\g_k,\tau_k$, and prune the wave packets associated to $f_{\g_k},f_{\tau_k}$ according to their amplitudes. 

For each $\g_k$, fix a dual block $\g_k^*$ with dimensions $r_k^{-1}\times r_k^{-2}\times R$ which is comparable to the convex set
\[ \{x\in\R^3:|x\cdot\xi|\le 1\quad\forall\xi\in\g_k-\g_k\}. \]
For each $\tau_k$, fix a dual block $\tau_k^*$ of dimensions $R_k^{1/3}\times R_k^{2/3}\times R_k$ which is comparable to the convex set
\[ \{x\in\R^3:|x\cdot\xi|\le 1\quad\forall\xi\in\tau_k-\tau_k\}. \]
The main difference between dual small caps $\g_k^*$ and dual canonical caps $\tau_k^*$ is that for each $k$,  $\g_k^*=\tilde{\g_k}^*$ if $\g_k,\tilde{\g_k}\subset\theta$, whereas the $\tau_k^*$ are all distinct. 

We will describe wave packet decompositions for small caps $\{\g_k\}$ and for canonical caps $\{\tau_k\}$ in parallel. 
Let $\T_{\g_k},\T_{\tau_k}$ be the collection of tubes $T_{\g_k},T_{\tau_k}$ which are dual to $\g_k,\tau_k$, contain $\g_k^*,\tau_k^*$, and which tile $\R^3$, respectively. Next, define associated partitions of unity $\s_{T_{\g_k}},\s_{T_{\tau_k}}$. 
Let $\p(\xi)$ be a bump function supported in $[-\frac{1}{4},\frac{1}{4}]^3$. For each $m\in\Z^3$, let 
\[ \s_m(x)=c\int_{[-\frac{1}{2},\frac{1}{2}]^3}|\widecheck{\p}|^2(x-y-m)dy, \]
where $c$ is chosen so that $\sum_{m\in\Z^3}\s_m(x)=c\int_{\R^3}|\widecheck{\p}|^2=1$. Since $|\widecheck{\p}|$ is a rapidly decaying function, for any $n\in\N$, there exists $C_n>0$ such that
\[ \s_m(x)\le c\int_{[-\frac{1}{2},\frac{1}{2}]^3}\frac{C_n}{(1+|x-y-m|^2)^n}dy \le \frac{\tilde{C}_n}{(1+|x-m|^2)^n}. \]
Define the partitions of unity $\s_{T_{\g_k}},\s_{T_{\tau_k}}$ associated to $\g_k,{\tau_k}$ to be $\s_{T_{\g_k}}=\s_m\circ A_{\g_k}$ $\s_{T_{\tau_k}}(x)=\s_m\circ A_{\tau_k}$, where $A_{\g_k},A_{\tau_k}$ are linear transformations taking $\g_k^*$,$\tau_k^*$ to $[-\frac{1}{2},\frac{1}{2}]^3$ and $A_{\g_k}(T_{\g_k})=m+[-\frac{1}{2},\frac{1}{2}]^3$, $A_{\tau_k}(T_{\tau_k})=m+[-\frac{1}{2},\frac{1}{2}]^3$. The important properties of $\s_{T_{\g_k}},\s_{T_{\tau_k}}$ are (1) rapid decay off of $T_{\g_k},T_{\tau_k}$ and (2) Fourier support contained in $\g_k,\tau_k$ translated to the origin. 

To prove upper bounds for the size of $U_\a$, we will actually bound the sizes of $\sim \e^{-1}$ many subsets which will be denoted $U_\a\cap H$, $U_\a\cap\Lambda_k$, $U_\a\cap\Omega_k$, and $U_\a\cap L$. The pruning process sorts between important and unimportant wave packets on each of these subsets, as described in Lemma \ref{ftofk} below.

In the following definition, $A_\e\gg 1$ is a large enough (determined by Lemma \ref{ftofk}) constant depending on $\e$ which also satisfies $A_\e\ge D_\e$, where $D_\e$ is given by Lemma \ref{low}. We partition the wave packets $\T_{\g_{k}}=\T_{\g_{k}}^{g}\sqcup\T_{\g_{k}}^{b}$ and $\T_{\tau_k}=\T_{\tau_k}^g\sqcup\T_{\tau_k}^b$ into ``good" and ``bad" sets, and define corresponding versions of $f$, as follows. 


\begin{rmk} In the following definitions, let $K\ge 1$  be a large parameter which will be used to define the broad set in Proposition \ref{mainprop}. 
\end{rmk}
\begin{definition}[Pruning with respect to $\g_k$]\label{thetakprune} Let $f_\g^M=f_\g$ and $f_{\g_{M-1}}^{M}=f_{\g_{M-1}}$. For each $1\le k<M$, let 
\begin{align*} \T_{\g_k}^{g}&=\{T_{\g_k}\in\T_{\g_{k}}:\|\s_{T_{\theta_{k}}}f_{\g_{k}}^{k+1}\|_{L^\infty(R^3)}\le K^3A_\e^{M-k+1}\frac{R^\b}{\a}\}, \\
f_{\g_{k}}^{k}=\sum_{T_{\g_k}\in\T_{\g_k}^{g}}&\s_{T_{\g_k}}f^{k+1}_{\g_k}\qquad\text{and}\qquad  f_{\g_{k-1}}^{k}=\sum_{\g_k\subset\g_{k-1}}f_{\g_k}^k .
\end{align*}
\end{definition}
Recall that $\g_0=\theta=\tau_N$. Once the wave packets corresponding to all of the small caps have been pruned, we have $f^1=\sum_{\g_1}f_{\g_1}^1$. 


\begin{definition}[Pruning with respect to $\tau_k$]\label{taukprune} Let $F^{N+1}=f^1$, $F^{N+1}_{\tau_N}=f^1_{\theta}$. 
For each $1\le k\le N$, let 
\begin{align*} \T_{\tau_k}^{g}&=\{T_{\tau_k}\in\T_{\tau_{k}}:\|\s_{T_{\tau_{k}}}F_{\tau_{k}}^{k+1}\|_{L^\infty(R^3)}\le K^3A_\e^{M+N-k+1}\frac{R^\b}{\a}\}, \\
F_{\tau_{k}}^{k}=\sum_{T_{\tau_k}\in\T_{\tau_k}^{g}}&\s_{T_{\tau_k}}F^{k+1}_{\tau_k}\qquad\text{and}\qquad  F_{\tau_{k-1}}^{k}=\sum_{\tau_k\subset\tau_{k-1}}F_{\tau_k}^k .
\end{align*}
\end{definition}
For each $k$, define the $k$th versions of $f$, $F$ to be $f^k=\underset{\g_k}{\sum} f_{\g_k}^k$ and $F^k=\underset{\tau_k}{\sum}F_{\tau_k}^k$.
\begin{lemma}[Properties of $f^k$ and $F^k$] \label{pruneprop}
\begin{enumerate} 
\item\label{item1} $ | f_{\g_{k}}^k (x) | \le |f_{ \g_{k}}^{k+1}(x)|\lesssim \#\g\subset\g_k$ and $| F_{\tau_{k}}^k (x) | \le |F_{ \tau_{k}}^{k+1}(x)|\lesssim \#\g\subset\tau_k.$
\item \label{item2} $\|f_{\g_k}^k\|_{L^\infty(\R^3)}\le K^3A_\e^{M-k+1}R^{3\e}\frac{R^\b}{\a}$ and $\| F_{\tau_k}^k \|_{L^\infty(\R^3)} \le K^3A_\e^{M+N-k+1}R^{3\e}\frac{R^\b}{\a}$.
\item\label{item3} There is some constant $\underline{C}_\e\lesssim \e^{-2}$ so that $\text{supp} \widehat{f_{\g_k}^{k+1}}\subset\text{supp} \widehat{f_{\g_k}^k}\subset \underline{C}_\e \g_k$ and $ \text{supp} \widehat{F_{\tau_k}^{k+1}}\subset\text{supp} \widehat{f_{\tau_k}^k}\subset \underline{C}_\e \tau_k . $
\end{enumerate}
\end{lemma}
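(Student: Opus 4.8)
## Proof proposal for Lemma \ref{pruneprop}

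The plan is to prove the three items by downward induction on $k$, tracking the two parallel pruning processes (first the $\g_k$-pruning for $1 \le k \le M$, then the $\tau_k$-pruning for $1 \le k \le N$) in the same way. The base cases are immediate from the definitions: $f_\g^M = f_\g$ and $f_{\g_{M-1}}^M = f_{\g_{M-1}} = \sum_{\g \subset \g_{M-1}} f_\g$, and similarly $F^{N+1}_{\tau_N} = f^1_\theta$, so item \eqref{item1} at the top level reduces to the hypothesis $\|f_\g\|_\infty \lesssim 1$ (via \eqref{unihyp}) summed over the $\lesssim \#\g\subset\g_k$ many caps inside, item \eqref{item2} follows since $R^\b/\a \gtrsim 1$ would need checking or is absorbed into the constant, and item \eqref{item3} holds because $\widehat{f_\g}$ is supported in $\g$.

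\textbf{Item \eqref{item3} (Fourier support).} This is the structural backbone and I would establish it first, independently. Each $\s_{T_{\g_k}}$ has Fourier support in $\g_k - c_{\g_k}$ (a translate of $\g_k$ to the origin), as recorded in the construction of the partition of unity. Passing from $f^{k+1}_{\g_k}$ to $f^k_{\g_k} = \sum_{T \in \T^g_{\g_k}} \s_T f^{k+1}_{\g_k}$ convolves the Fourier transform with something supported in a translate of $\g_k$, so $\supp \widehat{f^k_{\g_k}} \subset \supp\widehat{f^{k+1}_{\g_k}} + (\g_k - \g_k)$. The nesting $\g_{k+1} \subset \g_k$ (when they intersect) plus the fact that $f^{k+1}_{\g_k} = \sum_{\g_{k+1}\subset\g_k} f^{k+1}_{\g_{k+1}}$ with each summand Fourier-supported in $\underline C_\e \g_{k+1} \subset \underline C_\e \g_k$ lets the constant $\underline C_\e$ grow additively by $O(1)$ at each of the $\sim \e^{-1}$ stages; summing the geometric-type increments over the $O(\e^{-1})$ scales gives $\underline C_\e \lesssim \e^{-2}$. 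The same argument verbatim handles the $\tau_k$ side.

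\textbf{Items \eqref{item1} and \eqref{item2} (pointwise and $L^\infty$ bounds).} Item \eqref{item1} is nearly trivial from the definitions: $f^k_{\g_k} = \sum_{T\in\T^g_{\g_k}} \s_T f^{k+1}_{\g_k}$ where $\{\s_T\}$ is a partition of unity, so $|f^k_{\g_k}| \le \sum_T \s_T |f^{k+1}_{\g_k}| = |f^{k+1}_{\g_k}|$ pointwise; the bound $|f^{k+1}_{\g_k}| \lesssim \#\g\subset\g_k$ then propagates down by induction from the top. For item \eqref{item2}, the point is that the pruning threshold in Definition \ref{thetakprune} directly controls $\|\s_{T_{\g_k}} f^{k+1}_{\g_k}\|_\infty \le K^3 A_\e^{M-k+1} R^\b/\a$ for good tubes, and summing over overlapping good tubes at a point costs only the locally-constant / finite-overlap factor: since the $\s_T$ decay rapidly off $T_{\g_k}$ and the tubes tile $\R^3$, at most $\lesssim R^{O(\e)}$ of them are non-negligible at any given $x$, and — crucially — each $\s_T f^{k+1}_{\g_k}$ has Fourier support in a translate of $\g_k$ (by item \eqref{item3}), so it is locally constant at the scale of the dual box $\g_k^*$, which lets one replace a sup by an average over $\g_k^*$ and bound the overlapping sum by $R^{3\e}$ times the single-tube bound. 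This yields $\|f^k_{\g_k}\|_\infty \le K^3 A_\e^{M-k+1} R^{3\e} R^\b/\a$. The $\tau_k$ side is identical with $A_\e^{M-k+1}$ replaced by $A_\e^{M+N-k+1}$.

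\textbf{Main obstacle.} The only genuinely delicate point is the $R^{3\e}$ loss in item \eqref{item2}: one must argue that summing $\s_T f^{k+1}_{\g_k}$ over the good tubes $T$ at a fixed point $x$ costs at most a subpolynomial factor. The clean way is the locally-constant principle — because $\s_T f^{k+1}_{\g_k}$ has Fourier support in a translate of $\g_k$, its absolute value is $\lesssim$ its average over a dual box, and these boxes have bounded overlap when translated along the tube directions — but making the bookkeeping of the overlap uniform in $k$ (so that the loss is $R^{3\e}$ and not, say, $R_k^{C\e}$ accumulated over scales) requires care; I expect this to be where the paper invokes a previously established locally-constant lemma rather than reproving it. Everything else is a routine induction.
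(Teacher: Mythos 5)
Your proposal follows essentially the same route as the paper's proof: item \eqref{item1} comes directly from the nonnegative partition of unity and iterating down the scales with $\|f_\g\|_\infty\lesssim 1$, item \eqref{item2} from splitting the good-tube sum at a point $x$ into the $\lesssim R^{3\e}$ tubes with $x\in R^\e T_{\g_k}$ (each bounded by the pruning threshold) plus the rapidly decaying far tubes, and item \eqref{item3} from the Fourier support of $\s_{T_{\g_k}}$ in a translate of $\g_k$, with the dilation constant growing additively over the $O(\e^{-1})$ scales. One small remark: the locally-constant/averaging step you invoke in item \eqref{item2} is not needed (and is not what the paper does) — the definition of $\T_{\g_k}^g$ already gives the $L^\infty$ bound on each term $\s_{T_{\g_k}}f_{\g_k}^{k+1}$, so the only inputs are the $R^{3\e}$ count of near tubes and the rapid decay of $\s_{T_{\g_k}}$ off $R^\e T_{\g_k}$, the far contribution being absorbed because the trivial bound $|f_{\g_k}^{k+1}|\lesssim R$ together with $\a\lesssim R^\b$ makes it far smaller than $\frac{R^\b}{\a}$; likewise there is no accumulation of losses over $k$ here, since the bound is a single pruning step from $f_{\g_k}^{k+1}$ to $f_{\g_k}^k$.
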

\begin{proof} For the first property, recall that $\sum_{T_{\g_k}\in\T_{\g_k}}\s_{T_{\g_k}},\sum_{T_{\tau_k}\s_{T_{\tau_k}} \in \T_{\tau_k}}$ are partitions of unity so we may iterate the inequalities 
\begin{align*}
|F_{\tau_k}^k|\le |F_{\tau_k}^{k+1}|&\le \sum_{\tau_{k+1}\subset\tau_k}|F_{\tau_{k+1}}^{k+1}|\le\cdots\le \sum_{\tau_N\subset\tau_k}|F_{\tau_N}^N|\le  \sum_{\g_1\subset\tau_k}|f_{\g_1}^1|\\
\text{and }\qquad |f_{\g_1}^1|\le&|f_{\g_1}^2|\le \sum_{\g_2\subset\g_1}|f_{\g_2}^2|\le  \cdots \sum_{\g_N\subset\g_1}|f_{\g_N}^N|\le\sum_{\g\subset\g_1}\|f_{\g}\|_{L^\infty(\R^3)}.  
\end{align*}
Then use the assumption that each $\|f_\g\|_{L^\infty(\R^3)}\lesssim 1$. Now consider the $L^\infty$ bound in the second property.  We write
$$ f_{ \g_k}^k(x) = \sum_{\substack{T_{\g_k} \in \T_{\g_k^h},\\ x \in R^\e T_{\g_k}}} \s_{T_{\g_k}} f_{ \g_k}^{k+1} + \sum_{\substack{T_{\g_k} \in \T_{\g_k}^h,\\ x \notin R^\e T_{\g_k}}} \s_{T_{\g_k}} f_{\g_k}^{k+1}. $$

\noindent The first sum has at most $ R^{3\e}$ terms, and each term has norm bounded by $K^3A_\e^{N-k}\frac{R^\b}{\a}$, by the definition of $\T_{\g_k}^h$.  By the first property, we may trivially bound $f_{\tau_k}^{k+1}$ by $\#\g\subset\tau_k \max_\g\|f_\g\|_\infty\lesssim R$. But if $x \notin R^\e T_{\g_k}$, then $\s_{T_{\g_k}}(x) \le R^{-1000}$. Thus 
\begin{align*} 
|\sum_{\substack{T_{\g_k} \in \T_{\g_k}^h,\\  x \notin R^\e T_{\g_k}}} \s_{T_{\g_k}} f^{k+1}_{ \g_k}|&\le \sum_{\substack{T_{\g_k} \in \T_{\g_k}^h,\\ x \notin R^\e T_{\g_k}}} R^{-500}\s_{T_{\g_k}}^{1/2}(x) \|f^{k+1}_{ \g_k}\|_\infty\le R^{-250}\max_\g\|f_\g\|_\infty. 
\end{align*} 
Since $\a\lesssim|f(x)|\lesssim \sum_\g\|f_\g\|_\infty\lesssim R^\b$, we certainly have $R^{-250}\le \frac{R^\b}{\a}$. The argument for $\|F_{\tau_k}^k\|_{L^\infty(\R^3)}$ is analogous. 

The third property depends on the Fourier supports of $\s_{T_{\g_k}}, \s_{T_{\tau_k}}$, which are contained in $\g_k$, $\tau_k$ shifted to the origin. If each $f_{\g_k}^{k+1}$ has Fourier support in $C\g_k$ (that is, a dilated copy of $\g_k$ by a factor of $C$, taken with respect to its centroid), then $\supp\widehat{f_{\g_k}^k}$ is contained in $(1+C)\g_k$. The same type of argument is true for the claims about $F_{\tau_k}^k$ and $F_{\tau_k}^{k+1}$.

\end{proof}

\begin{definition} \label{M3ballweight} Let $\phi:\R^3\to\R$ be a smooth function supported in $[-\frac{1}{4},\frac{1}{4}]^3$. Define
\[ w(x)=|\widecheck{\phi}|^2(x)+\sum_{k=1}^\infty\frac{1}{(1+k^2)^{100}}\Big(|\widecheck{\phi}|^2(t-k). \]
Let $w(t_1,t_2,t_3)=w_0(t_1)w_0(t_2)w_0(t_3)$ and let $Q=[-\frac{1}{2},\frac{1}{2}]^3$ denote the unit cube centered at the origin. For any set $U=T(B)$ where $T$ is an affine transformation $T:\R^3\to\R^3$, define
\[ w_{U}(x)=|U|^{-1}w(T^{-1}(x)). \]
For $\g_k$, $\tau_k$, let $A_{\g_k}$,$A_{\tau_k}$ be affine transformations taking $\g_k^*$, $\tau_k^*$ to $[-\frac{1}{2},\frac{1}{2}]^3$ and define $\w_{\g_k}$,$\w_{\tau_k}$ by
\[  \w_{\g_k}(x)=|\g_k^*|^{-1}w(R_{\g_k}(x))\qquad\text{and}\qquad \w_{\tau_k}(x)=|\tau_k^*|^{-1}w(R_{\tau_k}(x)). \]
Let the capital-W version of weight functions denote the $L^\infty$-normalized (as opposed to $L^1$-normalized) versions, so for example, for any cube $Q_s$ of sidelength $s$, $W_{Q_s}(x)=|Q_s|w_{Q_s}(x)$. If a weight function has subscript which is only a scale, say $s$, then the functions $w_s,W_s$ are weight function localized to the $s$-cube centered at the origin. We will ignore the distinction between an $s$-ball and an $s$-cube. 
\end{definition}

\begin{rmk}
Note the additional property that $\widehat{w}(\xi_1,\xi_2,\xi_3)$ is supported in $[-\frac{1}{2},\frac{1}{2}]^3$, so $w_{s}$ is Fourier supported in an $s^{-1}$-cube at the origin. Similarly, $\w_{\g_k}$ and $\w_{\tau_k}$ are Fourier supported in $\g_k$ and $\tau_k$ translated to the origin, respectively. The same is true for the $W_{B_s},W_{\g_k^*},W_{\tau_k^*}$ weight functions. Finally, note that if $S_1=T_1(Q)$ and $S_2=T_2(Q)$ where $T_i$ are anisotropic dilations  with respect to the standard basis and $S_1\subset S_2$, then $w_{S_1}*w_{S_2}\lesssim w_{S_2}$. 
\end{rmk}

The weights $\w_{\tau_k}$, $\w_{\theta}=\w_{\tau_N}$, and $w_{s}$ are useful when we invoke the locally constant property. By locally constant property, we mean generally that if a function $f$ has Fourier transform supported in a convex set $A$, then for a bump function $\p_A\equiv 1$ on $A$, $f=f*\widecheck{\p_A}$. Since $|\widecheck{\p_A}|$ is an $L^1$-normalized function which is positive on a set dual to $A$, $|f|*|\widecheck{\p_A}|$ is an averaged version of $|f|$ over a dual set $A^*$. We record some of the specific locally constant properties we need in the following lemma.  
\begin{lemma}[Locally constant property]\label{locconst} For each $\g_k,\tau_k$ and $T_{\g_k}\in\T_{\g_k},T_{\tau_k}\in\T_{\tau_k}$ respectively, 
\begin{align*} 
\|f_{\g_k}\|_{L^\infty(T_{\g_k})}^2\lesssim |f_{\g_k}|^2*\w_{\g_k}(x)\qquad\text{for any}\quad x\in T_{\g_k} \\
\text{and}\qquad \|f_{\tau_k}\|_{L^\infty(T_{\tau_k})}^2\lesssim |f_{\tau_k}|^2*\w_{\tau_k}(x)\qquad\text{for any}\quad x\in T_{\tau_k} .\end{align*}
Also, for any $r_k$-ball $B_{r_k}$ or $R_k^{\frac{1}{3}}$-ball $B_{R_k^{\frac{1}{3}}}$, 
\begin{align*} 
\|\sum_{\g_k}|f_{\g_k}|^2\|_{L^\infty(B_{r_k})}\lesssim \sum_{\g_k}|f_{\g_k}|^2*w_{B_{r_k}}(x)\qquad\text{for any}\quad x\in B_{r_k} \\
\text{and}\qquad \|\sum_{\tau_k}|f_{\tau_k}|^2\|_{L^\infty(B_{R_k^{\frac{1}{3}}})}\lesssim |f_{\tau_k}|^2*w_{B_{R_k^{1/3}}}(x)\qquad\text{for any}\quad x\in B_{R_k^{\frac{1}{3}}} .\end{align*}
\end{lemma}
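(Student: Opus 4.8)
The plan is to prove Lemma~\ref{locconst} as a direct consequence of the Fourier support localization of the relevant functions together with the rapid decay of the weight functions introduced in Definition~\ref{M3ballweight}. The underlying principle is the standard locally constant heuristic: if $g$ has Fourier support in a convex body $A$, then $g = g * \widecheck{\p_A}$ for any bump $\p_A$ that is identically $1$ on $A$, and $|\widecheck{\p_A}|$ is (up to a tail) an $L^1$-normalized bump on the dual body $A^*$. The four displayed inequalities are all instances of this, applied in turn to $f_{\g_k}$ (Fourier support in $\g_k$, which we may translate to the origin since multiplication by a character doesn't change $|f_{\g_k}|$), to $f_{\tau_k}$ (Fourier support in $\tau_k$), and then to the square functions $\sum_{\g_k}|f_{\g_k}|^2$ and $\sum_{\tau_k}|f_{\tau_k}|^2$, whose Fourier supports lie in $\bigcup_{\g_k}(\g_k-\g_k) \subset \underline{C}_\e\, (\text{an }r_k^{-1}\times r_k^{-2}\times r_k^{-3}\text{ box})$ and in an $R_k^{-1/3}\times R_k^{-2/3}\times R_k^{-1}$ box, respectively, hence in $r_k^{-1}$- and $R_k^{1/3}$-cubes after crude containment.

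First I would record the elementary pointwise fact: if $g$ is Fourier-supported in $A$ (a translate to the origin of $\g_k$, $\tau_k$, or a cube $B_s^{-1}$), then for $x$ in the relevant dual tube or ball,
\[
\|g\|_{L^\infty(T)}^2 \;\le\; \sup_{y\in T}|g|^2(y) \;\lesssim\; |g|^2 * w_A(x),
\]
where $w_A$ is the $L^1$-normalized weight adapted to the dual body $A^*$. To see this, write $g = g * \widecheck{\p}$ with $\widehat{\p}\equiv 1$ on $A$ and $\widehat{\p}$ supported in, say, $2A$; then $|g(y)| \le |g| * |\widecheck{\p}|(y)$, and since $|\widecheck{\p}|$ is rapidly decaying and essentially supported on $A^*$, for any two points $y, x$ in a common translate of $A^*$ we have $|\widecheck{\p}|(y - z) \lesssim w_A(x - z)$ pointwise in $z$ (the rapid decay absorbs the bounded shift $y - x \in A^* - A^*$). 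Squaring and using Cauchy--Schwarz (or just $|g|^2 \le (|g|*|\widecheck\p|)^2 \lesssim |g|^2 * |\widecheck\p|$ after normalizing, since $|\widecheck\p|$ is $L^1$-normalized) gives the claim. The transformation $A_{\g_k}, A_{\tau_k}$ from Definition~\ref{M3ballweight} reduces the anisotropic cases to the cube case by change of variables.

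Next I would apply this to each of the four displays. For the first two, take $g = f_{\g_k}$ (resp.\ $f_{\tau_k}$) after multiplying by a character so its Fourier support is $\g_k$ (resp.\ $\tau_k$) translated to the origin; the dual body is comparable to $\g_k^*$ (resp.\ $\tau_k^*$), and $\w_{\g_k}$ (resp.\ $\w_{\tau_k}$) is exactly the adapted weight, giving the stated inequalities for $x\in T_{\g_k}$ (resp.\ $x\in T_{\tau_k}$). For the last two, set $g$-role played by the square function $\sum_{\g_k}|f_{\g_k}|^2$: each $\widehat{|f_{\g_k}|^2}$ is supported in $\g_k - \g_k$, and by the nesting/geometry of the small caps these difference sets all sit inside a single box of dimensions $\lesssim r_k^{-1}\times r_k^{-2}\times r_k^{-3}$, hence inside an $r_k^{-1}$-cube at the origin; thus the whole sum is Fourier-supported in an $r_k^{-1}$-cube and the locally constant property over $B_{r_k}$ applies with weight $w_{B_{r_k}}$. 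The $\tau_k$ case is identical with $r_k^{-1}$ replaced by $R_k^{-1/3}$ (so a ball of radius $R_k^{1/3}$). I would note the harmless bookkeeping constant $\underline C_\e$ from Lemma~\ref{pruneprop}\eqref{item3} only inflates the box by a fixed factor and is absorbed into $\lesssim$.

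The main obstacle — really the only non-bookkeeping point — is the passage from a pointwise bound involving $|\widecheck\p|$ to one involving the fixed weights $w_{\g_k}, w_{\tau_k}, w_{B_{r_k}}$: one must check that the bounded displacement by an element of the dual body (which is what the ``$L^\infty$ over a tube/ball'' costs us) is genuinely absorbed by the super-polynomial decay built into $w$ in Definition~\ref{M3ballweight}, uniformly in the scale and in $R$. This is where the specific form $w = \prod w_0(t_i)$ with $w_0$ a sum of shifted $|\widecheck\phi|^2$'s weighted by $(1+k^2)^{-100}$ is used: it guarantees $w_A(x - z) \gtrsim |\widecheck\p|(y-z)$ whenever $y - x$ lies in $O(1)$ copies of $A^*$, with an implied constant independent of everything. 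Once that comparison is in hand, all four inequalities follow by the same three-line argument, and the rest is the routine change-of-variables reduction to the unit cube.
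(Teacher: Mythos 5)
Your proposal is correct and follows essentially the same route as the paper: write the function (or square function) as a convolution with $\widecheck{\rho}$ for a bump $\rho\equiv 1$ on the Fourier support, use $\|\widecheck{\rho}\|_1\lesssim 1$ to get $|g|^2\lesssim |g|^2*|\widecheck{\rho}|$, dominate $|\widecheck{\rho}|$ by the adapted weight, and use that the weight is comparable under translations within $T_{\g_k}$, $T_{\tau_k}$, or the ball. The only slip is your intermediate claim that the sets $\g_k-\g_k$ lie in an $r_k^{-1}\times r_k^{-2}\times r_k^{-3}$ box (the last dimension is really $R^{-1}$), but this is harmless since your actual conclusion, containment in an $O(r_k^{-1})$-cube at the origin, is exactly what the paper uses.
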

Because the pruned versions of $f$, $f_{\g_k}$, and $f_{\tau_k}$ have similar Fourier supports as the unpruned versions (see Lemma \ref{pruneprop}), the locally constant lemma applies to the pruned versions as well.

\begin{proof}[Proof of Lemma \ref{locconst}] For the first claim, we write the argument for $f_{\tau_k}$ in detail (the argument for the $f_{\g_k}$ is analogous). Let $\rho_{\tau_k}$ be a bump function equal to $1$ on $\tau_k$ and supported in $2\tau_k$. Then using Fourier inversion and H\"{o}lder's inequality, 
\[ |f_{\tau_k}(y)|^2=|f_{\tau_k}*\widecheck{\rho_{\tau_k}}(y)|^2\le\|\widecheck{\rho_{\tau_k}}\|_1 |f_{\tau_k}|^2*|\widecheck{\rho_{\tau_k}}|(y). \]
Since $\rho_{\tau_k}$ may be taken to be an affine transformation of a standard bump function adapted to the unit ball, $\|\widecheck{\rho_{\tau_k}}\|_1$ is a constant. The function $\widecheck{\rho_{\tau_k}}$ decays rapidly off of $\tau_k^*$, so $|\widecheck{\rho_{\tau_k}}|\lesssim w_{{\tau_k}}$.
Since for any $T_{\tau_k}\in\T_{\tau_k}$, $\w_{\tau_k}(y)$ is comparable for all $y\in T_{\tau_k}$, we have
\begin{align*} \sup_{x\in T_{\tau_k}}|f_{\tau_k}|^2*\w_{\tau_k}(x)&\le \int|f_{\tau_k}|^2(y)\sup_{x\in T_{\tau_k}}\w_{\tau_k}(x-y)dy\\
&\sim \int|f_{\tau_k}|^2(y)\w_{\tau_k}(x-y)dy\qquad \text{for all}\quad x\in T_{\tau_k}. 
\end{align*}

For the second part of the lemma, repeat analogous steps as above, except begin with $\rho_{r_k}$ which is identically $1$ on a ball of radius $2r_k^{-1}$ containing $\g_k-\g_k$ (which is the Fourier support of $|f_{\g_k}|^2$). Then 
\[  \sum_{\g_k}|f_{\g_k}(y)|^2=|\sum_{\g_k}|f_{\g_k}|^2*\widecheck{\rho_{r_k}}(y)|\lesssim \sum_{\g_k}|f_{\g_k}|^2*|\widecheck{\rho_{r_k}}|(y). \]
The rest of the argument is analogous to the first part. The argument for $\sum_{\tau_k}|f_{\tau_k}|^2$ is the same.

\end{proof}

For ease of future reference, we record the following standard local and global $L^2$-orthogonality lemma. For $U\subset\R^3$, let $U^*=\{\xi\in\R^3:|\xi\cdot x|\le 1\qquad\forall x\in U-U\}$.  
\begin{lemma}[Local and global $L^2$ orthogonality]\label{L2orth} Let $U=T(Q)$ where $Q$ is the unit ball centered at the origin and $T:\R^3\to\R^3$ is an affine transformation. Let $h:\R^3\to\C$ be a Schwartz function with Fourier transform supported in a disjoint union $X=\sqcup_k X_k$, where $X_k\subset B$ are Lebesgue measurable. If the maximum overlap of the sets $U^*+X_k$ is $L$, then
\[ \int |h_X|^2w_U\lesssim L\sum_{X_k}\int|h_{X_k}|^2w_U, \]
where $h_{X_k}=\int_{X_k}\widehat{h}(\xi)e^{2\pi i x\cdot\xi}d\xi$.
The corresponding global statement is 
\[ \int|h_X|^2=\sum_{X_k}\int|h_{X_k}|^2. \]
\end{lemma}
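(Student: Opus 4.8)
The plan is to treat the global statement first, since it is just Plancherel, and then deduce the local version by a standard uncertainty-principle / weight argument. For the global statement, write $h_X = \sum_k h_{X_k}$ with $\widehat{h_{X_k}} = \widehat h \cdot \mathbbm{1}_{X_k}$; since the $X_k$ are pairwise disjoint and measurable, the functions $\widehat{h_{X_k}}$ have disjoint supports, so $\|\widehat{h_X}\|_2^2 = \sum_k \|\widehat{h_{X_k}}\|_2^2$, and Plancherel converts this into the claimed $L^2$ identity. This requires only that $h$ is Schwartz (or $L^2$) so that all the pieces are genuinely in $L^2$.

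For the local statement, the idea is that $w_U$ is (comparable to) the square of a function whose Fourier transform is supported in $U^* = (T(Q))^*$, up to a harmless dilation. Concretely, recall from Definition~\ref{M3ballweight} and the following remark that $w_U(x) = |U|^{-1} w(T^{-1}x)$ and that $\widehat w$ is supported in $[-\frac12,\frac12]^3$, so $\widehat{w_U}$ is supported in a set comparable to $U^*$ (shrinking $Q$ slightly if necessary, which only changes constants). Then for each $k$, the function $h_{X_k}\, w_U^{1/2}$ — after absorbing the square root of the weight, whose Fourier transform is supported in a $O(1)$-dilate of $U^*$ — has Fourier transform supported in $X_k + C U^*$. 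I would expand
\[
\int |h_X|^2 w_U = \sum_{k,k'} \int h_{X_k}\, \overline{h_{X_{k'}}}\, w_U,
\]
and note that the $(k,k')$ term vanishes unless $(X_k + CU^*) \cap (X_{k'} + CU^*) \neq \emptyset$, i.e. unless $X_k$ and $X_{k'}$ are "close" on the scale of $U^*$; by the hypothesis that the sets $U^* + X_k$ overlap at most $L$ times (again after a fixed dilation of $U^*$, absorbed into the implicit constant), each $k$ is paired with $O(L)$ values of $k'$. Applying Cauchy–Schwarz to each surviving term, $\int |h_{X_k} \overline{h_{X_{k'}}}| w_U \le \frac12\int(|h_{X_k}|^2 + |h_{X_{k'}}|^2) w_U$, and summing gives $\int|h_X|^2 w_U \lesssim L \sum_k \int |h_{X_k}|^2 w_U$, as desired.

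The main technical point — and the only place one must be slightly careful — is the bookkeeping of dilation constants: the weight's Fourier support is comparable to $U^*$ rather than literally contained in it, so one must pass from "$U^* + X_k$ has overlap $L$" to "$C U^* + X_k$ has overlap $\lesssim_C L$", which is true with an extra dimensional constant folded into the $\lesssim$. This is routine, so I would state it as such rather than belabor it. (Alternatively, one can avoid the dilation entirely by choosing $\phi$ in Definition~\ref{M3ballweight} supported in a sufficiently small cube so that $w_U^{1/2}$ has Fourier support genuinely inside $U^*$; I would remark on this option but not pursue it in detail.) No other obstacle is expected; everything else is Plancherel and Cauchy–Schwarz.
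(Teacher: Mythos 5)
Your argument is correct and is essentially the paper's own proof: Plancherel for the global statement, and for the local one an expansion into cross terms $\int h_{X_k}\overline{h_{X_{k'}}}\,w_U$, which vanish unless $X_k$ meets $X_{k'}+\supp\widehat{w_U}\subset X_{k'}+O(1)\,U^*$, followed by the overlap hypothesis (up to a harmless dilation constant) and Cauchy--Schwarz. One minor caveat: $w_U^{1/2}$ need not itself have compactly supported Fourier transform (square roots of band-limited functions are generally not band-limited), but this framing remark is inessential, since the computation you actually carry out only uses the Fourier support of $w_U$ itself, exactly as in the paper.
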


\begin{proof} The global statement is just Plancherel's theorem.
 For the local statement, we have
\begin{align*}
    \int|h_X|^2w_U&=\int h_X \overline{h_Xw_U}=\int \widehat{h_X}\overline{\widehat{h_X}*\widehat{w_U}}
\end{align*}
by Plancherel's theorem again. Next we used the definition of $\widehat{h_X}$ and $\widehat{h_{X_k}}$ to write
\[\int \widehat{h_X}\overline{\widehat{h_X}*\widehat{w_U}}=\sum_{X_k}\sum_{X_k'}\int\widehat{h_{X_k}}\overline{\widehat{h_{X_k'}}*\widehat{w_U}}.  \]
The function $\widehat{h_{X_k}}$ is supported in $X_k$ and the function $\widehat{h_{X_k'}}*\widehat{w_U}$ is supported in $X_k'+U^*$. Write $X_k'\sim X_k$ to denote the property that $(X_k+U^*)\cap(X_k'+U^*)\not=\emptyset$. By hypothesis, for each $X_k$, there are at most $L$ many $X_k'$ such that $X_k'\sim X_k$. Since $X_k\cap(X_k'+U^*)\subset(X_k+U^*)\cap(X_k'+U^*)$, this leads to the bound
\begin{align*} \sum_{X_k}\sum_{X_k'}\int\widehat{h_{X_k}}\overline{\widehat{h_{X_k'}}*\widehat{w_U}} &= \sum_{X_k}\sum_{X_k'\sim X_k}\int h_{X_k}\overline{h_{X_k'}}w_U\\
    &\le \sum_{X_k}\sum_{X_k'\sim X_k}\int (|h_{X_k}|^2+|h_{X_k'}|^2)w_U \\
    &\le \sum_{X_k}\sum_{X_k'\sim X_k}\int (|h_{X_k}|^2+|h_{X_k'}|^2)w_U\le 2L\sum_{X_k}\int|h_{X_k}|^2w_U. 
\end{align*}

\end{proof}

\begin{definition}[Auxiliary functions] For $i=1,2$, let $\p_i:\R^{i}\to[0,\infty)$ be a radial, smooth bump function satisfying $\p_i(x)=1$ on the unit ball in $\R^{i}$ and supported in the ball of radius $2$. Then for each $\s>0$, let $\rho:\R^3\to[0,\infty)$ be defined by 
\[ \rho_{\le s^{-1}}(\xi_1,\xi_2,\xi_3)=\p_2(s(\xi_1,\xi_2))\p_1(\xi_3). \]
Write ${\mc{C}}_{s^{-1}}$ for the set where $\rho_{\le s^{-1}}=1$. 
\end{definition}
We will sometimes abuse the notation from the previous definition by writing $h*\widecheck{\rho}_{>s^{-1}}=h-h*\widecheck{\rho}_{\le s^{-1}}$.
\vspace{3mm}
\begin{definition} Let $g_M(x)=\sum_{\g}|f_\g|^2*\w_{\g}(x)$. For $1\le k\le M-1$, let 
\[ g_k(x)=\sum_{\g_k}|f_{\g_k}^{k+1}|^2*\w_{\g_k}, \qquad g_k^{\ell}(x)=g_k*\widecheck{\rho}_{\le r_{k+1}^{-1}}, \qquad\text{and}\qquad g_k^h=g_k-g_k^{\ell}. \]
For $1\le k\le N$, let
\[ G_k(x)=\sum_{\tau_k}|F_{\tau_k}^{k+1}|^2*\w_{\tau_k},\qquad G_k^{\ell}(x)=G_k*\widecheck{\rho}_{\le R_{k+1}^{-1/3}},\qquad \text{and}\qquad G_k^h(x)=G_k-G_k^{\ell}.  \]
\end{definition}
\vspace{3mm}

In the following definition, $A_\e\gg 1$ is the same $\e$-dependent constant from the pruning definition of $f^k$ and $F^k$. 
\begin{definition} \label{impsets}Define the high set by 
\[ H=\{x\in B_{R^{\max(2\b,1)}}: A_\e R^\b \le g_{M-1}(x)\}. \]
For each $k=1,\ldots,M-2$, let $H=\Lambda_{M-1}$ and let
\[ \Lambda_k=\{x\in B_{R^{\max(2\b,1)}}\setminus \cup_{l=k+1}^{M-1}\Lambda_{l}: (A_\e)^{(M-k)}R^\b\le g_k(x) \}. \] 
For each $k=1,\ldots,N$, let $\Omega_{N+1}=\cup_{l=1}^{M-1}\Lambda_l$ and let
\[ \Omega_k=\{x\in B_{R^{\max(2\b,1)}}\setminus\cup_{l=k+1}^{N+1}\Omega_{l}:  (A_\e)^{(M+N-k)}R^\b\le G_k(x)\}.  \]
Define the low set to be
\[ L=B_{R^{\max(2\b,1)}}\setminus[(\cup_{l=1}^{N+1}\Omega_N)\cup(\cup_{k=1}^{M-1}\Lambda_k)]. \]
\end{definition}
\vspace{3mm}

\subsection{Lemmas related to the pruning process for wave packets }

\begin{lemma}[Low lemma]\label{low} There is a constant $D=D_\e>0$ depending on $\e$ so that for each $x$, $|g_k^\ell(x)|\le D_\e g_{k+1}(x)$ and $|G_k^\ell(x)|\le D_\e G_{k+1}(x)$. 
\end{lemma}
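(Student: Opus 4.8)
The plan is to exploit the fact that convolving with $\widecheck{\rho}_{\le r_{k+1}^{-1}}$ in physical space corresponds to a Fourier-side localization to the region $\mathcal{C}_{r_{k+1}^{-1}}$, i.e.\ to a cylinder of radius $r_{k+1}^{-1}$ in the $(\xi_1,\xi_2)$-variables and length $\lesssim 1$ in $\xi_3$. Recall $g_k=\sum_{\g_k}|f_{\g_k}^{k+1}|^2*\w_{\g_k}$; the Fourier support of $|f_{\g_k}^{k+1}|^2$ is contained in $\underline{C}_\e(\g_k-\g_k)$ by Lemma \ref{pruneprop}\eqref{item3}, and $\w_{\g_k}$ is Fourier-supported in $\g_k$ translated to the origin, so each summand $|f_{\g_k}^{k+1}|^2*\w_{\g_k}$ has Fourier transform in an $O_\e(1)$-dilate of $\g_k-\g_k$. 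The key geometric observation is that $(\g_k-\g_k)\cap \mathcal{C}_{C r_{k+1}^{-1}}$ forces $\xi_1\in[-O(r_{k+1}^{-1}),O(r_{k+1}^{-1})]$, and on that slab the small cap $\g_k$ (which lives over a $\xi_1$-interval of length $r_k^{-1}\ge r_{k+1}^{-1}$) is contained in a single $\g_{k+1}$. Hence, after the low-frequency cutoff, the only surviving contributions are organized by the coarser caps $\g_{k+1}$, and one passes from the $r_k$-scale square function to the $r_{k+1}$-scale square function.

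Concretely, I would argue as follows. First, write $g_k^\ell = g_k * \widecheck{\rho}_{\le r_{k+1}^{-1}} = \sum_{\g_{k+1}}\big(\sum_{\g_k\subset\g_{k+1}}|f_{\g_k}^{k+1}|^2*\w_{\g_k}\big)*\widecheck{\rho}_{\le r_{k+1}^{-1}}$, grouping the $\g_k$ according to which $\g_{k+1}$ contains them (using the nesting property from \textsection\ref{blocks}). For a fixed $\g_{k+1}$, the inner sum has Fourier support in $O_\e(\g_{k+1}-\g_{k+1})$, but intersecting with the support of $\widehat{\rho}_{\le r_{k+1}^{-1}}$ (a $Cr_{k+1}^{-1}$-cylinder about the $\xi_3$-axis) restricts to a set whose first coordinate is $\lesssim r_{k+1}^{-1}$; crucially these restricted sets are \emph{disjoint} (up to bounded overlap) across distinct $\g_{k+1}$ because distinct $\g_{k+1}$ sit over disjoint $\xi_1$-intervals of length $r_{k+1}^{-1}$ and differencing preserves enough separation near $\xi_1=0$. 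Then I bound pointwise
\[
|g_k^\ell(x)| \le \sum_{\g_{k+1}}\Big|\Big(\sum_{\g_k\subset\g_{k+1}}|f_{\g_k}^{k+1}|^2*\w_{\g_k}\Big)*\widecheck{\rho}_{\le r_{k+1}^{-1}}(x)\Big| \lesssim \sum_{\g_{k+1}}\sum_{\g_k\subset\g_{k+1}}|f_{\g_k}^{k+1}|^2*\w_{\g_k}*|\widecheck{\rho}_{\le r_{k+1}^{-1}}|(x).
\]
Since $\widecheck{\rho}_{\le r_{k+1}^{-1}}$ is an $L^1$-normalized bump essentially adapted to a dual slab containing $\g_{k+1}^*$, we have $\w_{\g_k}*|\widecheck{\rho}_{\le r_{k+1}^{-1}}| \lesssim \w_{\g_{k+1}}$ (both are, up to constants, $L^1$-normalized weights on nested dual boxes, so their convolution is dominated by the weight on the larger box, as in the remark after Definition \ref{M3ballweight}). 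Finally, using $|f_{\g_k}^{k+1}| \le |f_{\g_k}^{k+2}|\cdot(\text{nesting})$ and the relation between the $(k{+}1)$st pruned functions and the $k$th ones — i.e.\ that $f_{\g_{k+1}}^{k+2} = \sum_{\g_{k+2}\subset\g_{k+1}}f_{\g_{k+2}}^{k+2}$ and the pruning only removes wave packets — one gets $\sum_{\g_k\subset\g_{k+1}}|f_{\g_k}^{k+1}|^2 \lesssim D_\e |f_{\g_{k+1}}^{k+2}|^2$ pointwise after absorbing the overlap from summing the square function within $\g_{k+1}$ (this is where the local $L^2$-orthogonality of Lemma \ref{L2orth} enters, producing the $\e$-dependent constant $D_\e$). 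Assembling these gives $|g_k^\ell(x)| \lesssim D_\e \sum_{\g_{k+1}}|f_{\g_{k+1}}^{k+2}|^2*\w_{\g_{k+1}}(x) = D_\e\, g_{k+1}(x)$. The argument for $G_k^\ell$ and $G_{k+1}$ is identical, replacing small caps $\g_k$ by canonical caps $\tau_k$, the cylinder cutoff $\rho_{\le r_{k+1}^{-1}}$ by the ball cutoff $\rho_{\le R_{k+1}^{-1/3}}$, and the nesting of small caps by the nesting of the $\tau_k$.

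The main obstacle I anticipate is the geometric claim that the low-frequency cutoff genuinely collapses the $\g_k$-indexing to the $\g_{k+1}$-indexing with only bounded overlap — one has to check that $(\g_{k+1}-\g_{k+1})\cap\{|\xi_1|\lesssim r_{k+1}^{-1}\}$ really does determine $\g_{k+1}$ up to $O(1)$ ambiguity, which uses the explicit coordinates \eqref{momblocksintro} of the caps and the curvature of the moment curve (the $\xi_2$-thickness $r_{k+1}^{-2k}\!$... i.e.\ $r_{k+1}^{-2}$ and the tangency condition). The second delicate point is tracking the exact power of $D_\e$: the square-function sum over the $\sim r_{k+1}/r_k$ caps $\g_k$ inside a fixed $\g_{k+1}$ must be controlled by a single $\e$-power via local $L^2$-orthogonality on the relevant dual scale, and one must confirm this constant is of the claimed form $D_\e$ (polynomial in $\e^{-1}$), consistent with its later use in Definition \ref{impsets} and the pruning thresholds. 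Everything else is routine manipulation of rapidly-decaying weights.
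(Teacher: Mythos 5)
Your proposal has the scale hierarchy backwards and, more importantly, replaces the heart of the lemma by a false pointwise inequality. In the paper's notation $r_k\sim R^{\frac{1}{3}+k\e}$ increases with $k$, so $\g_{k+1}$ is the \emph{finer} cap and the nesting is $\g_{k+1}\subset\g_k$, with $f_{\g_k}^{k+1}=\sum_{\g_{k+1}\subset\g_k}f_{\g_{k+1}}^{k+1}$. Your decomposition $g_k^\ell=\sum_{\g_{k+1}}\big(\sum_{\g_k\subset\g_{k+1}}|f_{\g_k}^{k+1}|^2*\w_{\g_k}\big)*\widecheck{\rho}_{\le r_{k+1}^{-1}}$, and the claim that after restricting to $|\xi_1|\lesssim r_{k+1}^{-1}$ each $\g_k$ ``is contained in a single $\g_{k+1}$,'' are therefore not meaningful as written: the cutoff radius $r_{k+1}^{-1}$ is \emph{smaller} than the $\xi_1$-extent $r_k^{-1}$ of $\g_k-\g_k$, and the content of the lemma is precisely to pass from the square function over the coarser caps $\g_k$ to the one over the finer caps $\g_{k+1}$ appearing in $g_{k+1}$. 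The asserted bounded overlap of the sets $(\g_{k+1}-\g_{k+1})\cap\{|\xi_1|\lesssim r_{k+1}^{-1}\}$ over distinct caps is also false: every difference set contains the origin, and the disjointness statement in the paper (Proposition \ref{geo1}) concerns the high parts $\tilde{\g}_k$, not the low parts.

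The fatal gap is your final step, the pointwise bound $\sum_{\g_k\subset\g_{k+1}}|f_{\g_k}^{k+1}|^2\lesssim D_\e|f_{\g_{k+1}}^{k+2}|^2$ attributed to Lemma \ref{L2orth}. A sum of squares of pieces is not pointwise controlled by the square of their sum: the right-hand side can vanish by cancellation at points where individual pieces are large, and local $L^2$ orthogonality is an integrated statement going in the opposite direction, so it cannot yield such a reverse square-function estimate. Without that step your argument only produces $|g_k^\ell|\lesssim$ a smoothed copy of $g_k$ itself, which is useless for the high/low dichotomy. The missing idea is the bilinear expansion the paper uses: insert $f_{\g_k}^{k+1}=\sum_{\g_{k+1}\subset\g_k}f_{\g_{k+1}}^{k+1}$, so that $\widehat{|f_{\g_k}^{k+1}|^2}=\sum_{\g_{k+1},\g_{k+1}'\subset\g_k}\widehat{f_{\g_{k+1}}^{k+1}}*\widehat{\overline{f_{\g_{k+1}'}^{k+1}}}$, with each term supported in $\underline{C}_\e\g_{k+1}-\underline{C}_\e\g_{k+1}'$. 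Multiplying by $\rho_{\le r_{k+1}^{-1}}$ annihilates every pair except those with $\g_{k+1}$ within $\sim\underline{C}_\e r_{k+1}^{-1}$ of $\g_{k+1}'$ (the distance of the difference set from the vertical cylinder is read off from its $(\xi_1,\xi_2)$-projection), and the $O(\underline{C}_\e)$ surviving near-diagonal products are bounded via AM--GM by $\sum_{\g_{k+1}\subset\g_k}|f_{\g_{k+1}}^{k+1}|^2$; then Lemma \ref{pruneprop}, the locally constant property (Lemma \ref{locconst}), and the weight inequality $w_{\g_{k+1}}*\w_{\g_k}*|\widecheck{\rho}_{\le r_{k+1}^{-1}}|\lesssim w_{\g_{k+1}}$ give $|g_k^\ell(x)|\lesssim_\e g_{k+1}(x)$. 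It is this cancellation of off-diagonal terms enforced by the frequency cutoff, not any pointwise reverse orthogonality, that makes the lemma true; your outline never exploits it.
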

\begin{proof} Prove the claim in detail for $g_k^\ell$ since the argument for $G_k^\ell$ is analogous. We perform a pointwise version of the argument in the proof of local/global $L^2$-orthogonality (Lemma \ref{L2orth}). For each $\g_k^{k+1}$, using Plancherel's theorem,
\begin{align}
|f_{\g_k}^{k+1}|^2*\widecheck{\rho}_{\le r_{k+1}^{-1}}(x)&= \int_{\R^3}|f_{\g_k}^{k+1}|^2(x-y)\widecheck{\rho}_{\le r_{k+1}^{-1}}(y)dy \nonumber \\
&=  \int_{\R^3}\widehat{f_{\g_k}^{k+1}}*\widehat{\overline{f_{\g_k}^{k+1}}}(\xi)e^{2\pi i x\cdot\xi}\rho_{\le r_{k+1}^{-1}}(\xi)d\xi \nonumber \\
&=  \sum_{\g_{k+1},\g_{k+1}'\subset\g_k}\int_{\R^3}e^{2\pi i x\cdot\xi}\widehat{f_{\g_{k+1}}^{k+1}}*\widehat{\overline{f_{\g_{k+1}'}^{k+1}}}(\xi)\rho_{\le r_{k+1}^{-1}}(\xi)d\xi .\label{dis2}\nonumber
\end{align}
The integrand is supported in $(\underline{C}_\e\g_{k+1}-\underline{C}_\e\g_{k+1}')\cap (2{\mc{C}}_{ r_{k+1}^{-1}})$ where $\underline{C}_\e$ comes from \eqref{item3} of Lemma \ref{pruneprop} and $2{\mc{C}}_{ r_{k+1}^{-1}}$ contains the support of $\rho_{\le r_{k+1}^{-1}}$. The set ${\mc{C}}_{ r_{k+1}^{-1}}$ is contained in a cylinder with a vertical axis, centered at the origin and of radius $2r_{k+1}^{-1}$. The distance between the sets $\underline{C}_\e \g_{k+1}$ and $\underline{C}_\e\g_{k+1}'$ is controlled by the distance of their projections to the $(\xi_1,\xi_2)$-plane. This means that the final integral displayed above vanishes unless $\g_{k+1}$ is within $\sim \underline{C}_\e r_{k+1}^{-1}$ of $\g_{k+1}'$, in which case we write $\g_{k+1}\sim\g_{k+1}'$. Then 
\[\sum_{\g_{k+1},\g_{k+1}'\subset\g_k}\int_{\R^3}e^{2\pi i x\cdot\xi}\widehat{f}_{\g_{k+1}}^{k+1}*\widehat{\overline{f}_{\g_{k+1}'}^{k+1}}(\xi)\rho_{\le r_{k+1}^{-1}}(\xi)d\xi=\sum_{\substack{\g_{k+1},\g_{k+1}'\subset\g_k\\
\g_{k+1}\sim\g_{k+1}'}}\int_{\R^3}e^{2\pi i x\cdot\xi}\widehat{f}_{\g_{k+1}}^{k+1}*\widehat{\overline{f}_{\g_{k+1}'}^{k+1}}(\xi)\rho_{\le r_{k+1}^{-1}}(\xi)d\xi. \]
Use Plancherel's theorem again to return to a convolution in $x$ and conclude that
\begin{align*}
|g_k*\widecheck{\rho}_{\le r_{k+1}^{-1}}(x)|&=\Big|\sum_{\substack{\g_{k+1},\g_{k+1}'\subset\g_k\\
\g_{k+1}\sim\g_{k+1}'}}(f_{\g_{k+1}}^{k+1}\overline{f_{\g_{k+1}'}^{k+1}})*\w_{\tau_k}*\widecheck{\rho}_{\le r_{k+1}^{-1}}(x) \Big|\lesssim \underline{C}_\e\sum_{\g_k} \sum_{\g_{k+1}\subset\g_k}|f_{\g_{k+1}}^{k+1}|^2*\w_{\tau_k}*|\widecheck{\rho}_{\le r_{k+1}^{-1}}|(x)
. 
\end{align*}
By the locally constant property (Lemma \ref{locconst}) and \eqref{item1} of Lemma \ref{pruneprop},
\[ \sum_{\g_k} \sum_{\g_{k+1}\subset\g_k}|f_{\g_{k+1}}^{k+1}|^2*\w_{\tau_k}*|\widecheck{\rho}_{\le r_{k+1}^{-1}}|(x)\lesssim \sum_{\g_k} \sum_{\g_{k+1}\subset\g_k}|f_{\g_{k+1}}^{k+2}|^2*w_{\g_{k+1}}*\w_{\tau_k}*|\widecheck{\rho}_{\le r_{k+1}^{-1}}|(x)\lesssim g_{k+1}(x). \]
It remains to note that
\[ w_{\g_{k+1}}*\w_{\g_k}*|\widecheck{\rho}_{\le r_{k+1}^{-1}}|(x)\lesssim w_{\g_{k+1}}(x) \]
since $\g_k^*$ is comparable to a dilation of $g_{k+1}^*$ and and $\widecheck{\rho}_{\le r_{k+1}^{-1}}$ is an $L^1$-normalized function that is rapidly decaying away from $B_{r_{k+1}}$ (actually, it decays rapidly away from the small set $B^{(2)}_{r_{k+1}}(0)\times B^{(1)}_1(0)$). 

\end{proof}

\begin{corollary}[High-dominance on $\Lambda_k$,$\Omega_k$]\label{highdom} For $R$ large enough depending on $\e$, 
\[ g_k(x)\le 2|g_k^h(x)| \quad\forall x\in\Lambda_k\qquad\text{and}\qquad G_k(x)\le 2|G_k^h(x)|\qquad\forall x\in\Omega_k. \]
\end{corollary}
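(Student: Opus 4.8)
The plan is to read off the Corollary directly from the Low Lemma (Lemma \ref{low}) together with the nested structure of the sets $\Lambda_k$, $\Omega_k$. Since $g_k^h=g_k-g_k^\ell$ by definition, and $g_k\ge 0$ (it is a sum of the nonnegative functions $|f_{\g_k}^{k+1}|^2$ convolved with the nonnegative weights $\w_{\g_k}$), the triangle inequality gives $|g_k^h(x)|\ge g_k(x)-|g_k^\ell(x)|$; hence $g_k(x)\le 2|g_k^h(x)|$ holds at any $x$ where $|g_k^\ell(x)|\le \frac{1}{2} g_k(x)$, and the same reduction applies to $G_k$, $G_k^h=G_k-G_k^\ell$ on $\Omega_k$. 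So the whole statement reduces to bounding the low part of the square function by half the full square function on the relevant superlevel set.

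For a generic small-cap index $1\le k\le M-2$ and $x\in\Lambda_k$: Lemma \ref{low} gives $|g_k^\ell(x)|\le D_\e\, g_{k+1}(x)$, while the definition of $\Lambda_k$ gives $g_k(x)\ge A_\e^{M-k}R^\b$ together with $x\notin\bigcup_{l=k+1}^{M-1}\Lambda_l$. In particular $x\notin\Lambda_{k+1}$ and $x\notin\bigcup_{l=k+2}^{M-1}\Lambda_l$, so by the very definition of $\Lambda_{k+1}$ we must have $g_{k+1}(x)<A_\e^{M-k-1}R^\b$. Combining,
\[ |g_k^\ell(x)|\le D_\e A_\e^{M-k-1}R^\b\le \frac{D_\e}{A_\e}\,g_k(x)\le \frac{1}{2}\, g_k(x), \]
where the last step uses that $A_\e$ is taken large depending on $\e$ (in particular $A_\e\ge 2D_\e$, consistent with the standing requirement $A_\e\ge D_\e$). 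The canonical-cap case $1\le k\le N-1$, $x\in\Omega_k$, is word for word the same with $\Lambda,M$ replaced by $\Omega,M+N$: from $x\in\Omega_k$ one gets $G_k(x)\ge A_\e^{M+N-k}R^\b$, and since $x$ lies in none of $\Omega_{k+1},\dots,\Omega_{N+1}$ the definition of $\Omega_{k+1}$ forces $G_{k+1}(x)<A_\e^{M+N-k-1}R^\b$, after which Lemma \ref{low} closes the estimate exactly as above.

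It remains to handle the two extreme indices, where there is no coarser superlevel set available to exclude $x$ from. At $k=M-1$ (so $\Lambda_{M-1}=H$ and $g_{M-1}(x)\ge A_\e R^\b$) Lemma \ref{low} still gives $|g_{M-1}^\ell(x)|\le D_\e\, g_M(x)$, and in place of the exclusion we use the deterministic bound
\[ g_M(x)=\sum_{\g}|f_\g|^2*\w_\g(x)\le \sum_{\g}\|f_\g\|_{L^\infty(\R^3)}^2\lesssim R^\b, \]
which follows from the standing hypothesis \eqref{unihyp} (so $\|f_\g\|_{L^\infty(\R^3)}\le 2$), the $L^1$-normalization of $\w_\g$, and the fact that $\#\{\g\}\sim R^\b$; hence $|g_{M-1}^\ell(x)|\lesssim D_\e R^\b\le \frac{1}{2} g_{M-1}(x)$ for $A_\e$ large. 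At $k=N$ (so $\Omega_{N+1}=\bigcup_{l=1}^{M-1}\Lambda_l$ and $G_N(x)\ge A_\e^M R^\b$) one argues the same way, now using the endpoint case of Lemma \ref{low} at the junction $\tau_N=\g_0=\theta$ of the two chains, which bounds $|G_N^\ell(x)|$ by $D_\e$ times the coarsest small-cap square function $g_1(x)$; since $x\notin\Omega_{N+1}$ forces $x$ out of every $\Lambda_l$, one has $g_1(x)<A_\e^{M-1}R^\b$, and therefore $|G_N^\ell(x)|\le D_\e A_\e^{M-1}R^\b\le\frac{1}{2} G_N(x)$.

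I expect no genuine obstacle here: the analytic content lies entirely in Lemma \ref{low}, and everything else is bookkeeping of the shrinking families $\{\Lambda_l\}$ and $\{\Omega_l\}$. The one point that must be gotten right is that ``$x\in\Lambda_k$ but in no higher $\Lambda_l$'' really does pin $g_{k+1}(x)$ strictly below the threshold $A_\e^{M-k-1}R^\b$ (and likewise for $\Omega$), and that the two chain endpoints are treated as above; the only realistic pitfall is an off-by-one in the exponents $A_\e^{M-k}$ versus $A_\e^{M-k-1}$. The hypothesis that $R$ be large depending on $\e$ is inherited from Lemma \ref{low} and from the requirement that the scale sequences $\{R_k\}$, $\{r_k\}$ be strictly increasing.
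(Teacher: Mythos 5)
Your proposal is correct and takes essentially the same route as the paper: the paper's proof is just the contrapositive of yours (if $g_k>2|g_k^h|$ then $g_k<2|g_k^\ell|\le 2D_\e g_{k+1}\le 2D_\e A_\e^{M-k-1}R^\b$, contradicting $g_k\ge A_\e^{M-k}R^\b$ on $\Lambda_k$, with the trivial bound $g_M\lesssim R^\b$ at the endpoint $k=M-1$), i.e.\ the same combination of Lemma \ref{low}, the exclusion structure in Definition \ref{impsets}, and $A_\e$ large relative to $D_\e$. Your explicit handling of the $\Omega_N$ endpoint (bounding $G_N^\ell$ by the coarsest small-cap square function and using $x\notin\Omega_{N+1}=\cup_l\Lambda_l$ to control $g_1$) is left implicit in the paper's ``analogous'' remark, but it is the intended argument.
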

\begin{proof}
This follows directly from Lemma \ref{low}. Indeed, since $g_k(x)=g_k^{\ell}(x)+g_k^h(x)$, the inequality $g_k(x)>2|g_k^h(x)|$ implies that $g_k(x)<2|g_k^{\ell}(x)|$. Then by Lemma \ref{low}, $|g_k(x)|<2D_\e g_{k+1}(x)$. Since $x\in\Lambda_k$, $g_{k+1}(x)\le A_\e^{M-k-1}R^\b$, or in the case that $k=M-1$, $g_M(x)=\sum_\g |f_\g|^2*\w_\g(x)\lesssim\|\sum_\g|f_\g|^2\|_\infty\lesssim  R^\b$ using the assumption that $\|f_\g\|_\infty\lesssim 1$ for all $\g$. Altogether gives the upper bound
\[ g_k(x)\le 2D_\e A_\e^{M-k-1}R^\b. \]
The contradicts the property that on $\Lambda_k$, $A_\e^{M-k}R^\b\le g_k(x)$, for $A_\e$ sufficiently larger than $D_\e$, which finishes the proof. The argument for $G_k$ on $\Omega_k$ is analogous. 

\end{proof}

\begin{lemma}[Pruning lemma]\label{ftofk} For any $\tau$, 
\begin{align*} 
|\sum_{\g_k\subset\tau}f_{\g_k}-\sum_{\g_k\subset\tau}f_{\g_k}^{k+1}(x)|&\le \frac{\a}{A_\e^{1/2}K^3} \qquad\text{for all $x\in \Lambda_k$},\\
|\sum_{\tau_k\subset\tau}f_{\tau_k}-\sum_{\tau_k\subset\tau}F_{\tau_k}^{k+1}(x)|&\le \frac{\a}{A_\e^{1/2}K^3}\qquad\text{for all $x\in\Omega_k$},\\
\text{and}\qquad |\sum_{\tau_1\subset\tau}f_{\tau_1}-\sum_{\tau_1\subset\tau}F_{\tau_1}^{1}(x)|&\le \frac{\a}{A_\e^{1/2}K^3}\qquad \text{ for all $x\in L$}. \end{align*}
\end{lemma}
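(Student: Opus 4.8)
The plan is to prove all three displayed inequalities by the same telescoping argument, tracking how much mass is removed at each stage of the pruning. I would write the difference $\sum_{\g_k\subset\tau}f_{\g_k}-\sum_{\g_k\subset\tau}f_{\g_k}^{k+1}$ as a telescoping sum over the pruning steps $j=k+1,\dots,M$: indeed, by Definition \ref{thetakprune}, $f_{\g_j}^{j}-f_{\g_j}^{j+1}=-\sum_{T_{\g_j}\in\T_{\g_j}^{b}}\s_{T_{\g_j}}f_{\g_j}^{j+1}$, i.e.\ at each stage we discard precisely the wave packets whose sup-norm (after the partition-of-unity cutoff) exceeds the threshold $K^3A_\e^{M-j+1}R^\b/\a$. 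Summing $f_{\g_{j-1}}^{j}=\sum_{\g_j\subset\g_{j-1}}f_{\g_j}^{j}$ down through the scales, the total difference is a sum over $j$ and over bad tubes $T_{\g_j}\in\T_{\g_j}^{b}$ of the contributions $\s_{T_{\g_j}}f_{\g_j}^{j+1}$ (suitably reassembled at the scale of $\tau$).

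The key point is a \emph{disjointness-of-bad-tubes} estimate: for a fixed $x\in\Lambda_k$, I claim there can be at most a controlled (in fact $\lesssim 1$ after summing a convergent geometric/rapidly-decaying series) amount of bad-tube mass through $x$, and moreover each such contribution is small. The mechanism is the following. If $x\in\Lambda_k$, then by Definition \ref{impsets} $x\notin\Lambda_l$ for $l>k$, and also $x$ is not in the high set unless $k=M-1$; by the definition of $\Lambda_l$ this means $g_l(x)=\sum_{\g_l}|f_{\g_l}^{l+1}|^2*\w_{\g_l}(x)<A_\e^{M-l}R^\b$ for all $l>k$. Now if some tube $T_{\g_j}\ni x$ (more precisely $x\in R^\e T_{\g_j}$) were bad, then $\|\s_{T_{\g_j}}f_{\g_j}^{j+1}\|_\infty>K^3A_\e^{M-j+1}R^\b/\a$, and by the locally constant property (Lemma \ref{locconst}) this forces $|f_{\g_j}^{j+1}|^2*\w_{\g_j}(x)\gtrsim (K^3A_\e^{M-j+1}R^\b/\a)^2$, which contributes to $g_{j-1}(x)$ — contradicting the upper bound $g_{j-1}(x)<A_\e^{M-j+1}R^\b$ coming from $x\notin\Lambda_{j-1}$, provided $\a\lesssim R^\b$ (which holds since $\a\lesssim|f(x)|\lesssim R^\b$) and $K,A_\e$ are large. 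Hence no bad tube with $x\in R^\e T_{\g_j}$ actually occurs at scales $j>k$; for tubes with $x\notin R^\e T_{\g_j}$ the rapid decay $\s_{T_{\g_j}}(x)\le R^{-1000}$ (as used in the proof of Lemma \ref{pruneprop}) makes their total contribution negligible, e.g.\ $\le R^{-250}\max_\g\|f_\g\|_\infty\le \a/(A_\e^{1/2}K^3)$ after summing over the $\lesssim R^{O(1)}$ relevant tubes and scales. The $A_\e^{1/2}$ (rather than $A_\e$) in the denominator is the slack that lets us absorb the $\sim\e^{-1}$ many scales and the constants $D_\e,\underline C_\e$ from Lemmas \ref{low} and \ref{pruneprop}.

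For the second inequality the argument is identical with $\g_k,\g_j,\Lambda$ replaced by $\tau_k,\tau_j,\Omega$ and the threshold $K^3A_\e^{M+N-j+1}R^\b/\a$ from Definition \ref{taukprune}; the relevant upper bounds now come from $x\notin\Omega_l$ for $l>k$, i.e.\ $G_l(x)<A_\e^{M+N-l}R^\b$, together with the fact that the $\tau$-pruning is applied to $f^1=\sum_{\g_1}f_{\g_1}^1$, whose $L^\infty$ norm is already controlled by Lemma \ref{pruneprop}\eqref{item2}. The third inequality is the special case $k=1$ evaluated on $L$, where by Definition \ref{impsets} $x$ avoids \emph{all} the $\Omega_l$ and $\Lambda_l$, so every relevant $G_l(x)$ and $g_l(x)$ obeys the good upper bound and the same contradiction argument rules out bad tubes through $x$ at every scale. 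The main obstacle is bookkeeping: correctly matching the exponent $M-j+1$ (resp.\ $M+N-j+1$) in the pruning threshold against the exponent $M-l$ (resp.\ $M+N-l$) in the definition of $\Lambda_l$ (resp.\ $\Omega_l$) so that the contradiction has room to spare, and checking that the locally constant property may indeed be applied to the pruned pieces $f_{\g_j}^{j+1}$ — which is exactly the content of the remark following Lemma \ref{locconst}, since by Lemma \ref{pruneprop}\eqref{item3} the pruned pieces have essentially the same Fourier support as the unpruned ones.
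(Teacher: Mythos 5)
Your telescoping setup over the scales $m=k+1,\dots,M-1$ matches the paper's, but the central step of your argument --- the claim that \emph{no} bad tube $T_{\g_j}$ with $x\in R^\e T_{\g_j}$ can occur at scales $j>k$ --- is not correct, and the lemma cannot be proved this way. If $T_{\g_j}$ is bad, the locally constant property gives $g_j(x)\ge |f_{\g_j}^{j+1}|^2*\w_{\g_j}(x)\gtrsim R^{-O(\e)}\big(K^3A_\e^{M-j+1}R^\b/\a\big)^2$, and the available upper bound from $x\notin\Lambda_j$ is $g_j(x)< A_\e^{M-j}R^\b$. These are contradictory only when $\big(R^\b/\a\big)^2\gg R^\b$ up to the $K,A_\e$ factors, i.e.\ only when $\a\lesssim K^3A_\e^{O(1)}R^{\b/2}$. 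In the regime that actually matters --- Proposition \ref{mainprop} reduces to $R^\b\le\a^2$, and the constructive-interference example has $\a\sim R^\b$ --- the squared threshold is at most comparable to (indeed can be far smaller than) $A_\e^{M-j}R^\b$, so there is no contradiction: bad tubes genuinely do pass through points of $\Lambda_k$, and your argument then gives no bound at all on their contribution at $x$. (There is also a small indexing slip: $|f_{\g_j}^{j+1}|^2*\w_{\g_j}$ contributes to $g_j$, not $g_{j-1}$, but that is cosmetic next to the main issue.)

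The paper's proof avoids any claim of absence or disjointness of bad tubes. For a bad tube one has, by definition, $1\le K^{-3}A_\e^{-(M-m+1)}\frac{\a}{R^\b}\,\|\s_{T_{\g_m}}f_{\g_m}^{m+1}\|_{L^\infty}$, and inserting this factor converts the first power of each bad wave-packet amplitude into a second power divided by the threshold; summing over tubes and using the locally constant property turns the sum into $\lesssim_\e K^{-3}A_\e^{-(M-m+1)}\frac{\a}{R^\b}\,|f_{\g_m}^{m+1}|^2*\w_{\g_m}(x)$, hence into $K^{-3}A_\e^{-(M-m+1)}\frac{\a}{R^\b}\,g_m(x)$ after summing over $\g_m\subset\tau$. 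Only now is the membership $x\in\Lambda_k$ used, through the upper bound $g_m(x)\le A_\e^{M-m}R^\b$ for $m>k$, giving $\lesssim K^{-3}A_\e^{-1}\a$ per scale, which is summable over the $\lesssim\e^{-1}$ scales after taking $A_\e$ large. If you want to salvage your write-up, replace the contradiction step by this threshold-division argument; the treatment of the far tubes via rapid decay and the remarks about Fourier support of the pruned pieces are fine as you have them, and the same scheme then carries over verbatim to $\Omega_k$ (with $G_m$ and the thresholds of Definition \ref{taukprune}) and to $L$.
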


\begin{proof} 

Begin by proving the claim about $\Lambda_k$. By the definition of the pruning process, we have 
\begin{equation}\label{prunedecomp} f_{\tau}=f^{M-1}_{\tau}+(f_{\tau}^M-f^{M-1}_{\tau})=\cdots=f^{k+1}_{\tau}(x)+\sum_{m=k+1}^{M-1}(f^{m+1}_{\tau}-f^{m}_{\tau})\end{equation}
where here, the subscript $\tau$ means $f_\tau=\sum_{\g\subset\tau}f_\g$ and $f_{\tau}^m=\sum_{\g_m\subset\tau}f_{\g_m}^m$. We will show that each difference in the sum is much smaller than $\a$.
For each $M-1\ge m\ge k+1$ and $\g_m$, use the notation $\T_{\g_m}^b=\T_{\g_m}\setminus\T_{\g_m}^g$ and write
\begin{align*}
    |f_{\g_m}^m(x)-f_{\g_m}^{m+1}(x)|&=|\sum_{T_{\g_m}\in\T_{\g_m}^{b}}\s_{T_{\g_m}}(x)f_{\g_m}^{m+1}(x)|  = \sum_{T_{\g_m}\in T_{\g_m}^b} |\s_{T_{\g_m}}^{1/2}(x)f_{\g_m}^{m+1}(x)|\s_{T_{\g_m}}^{1/2}(x) \\
     & \le\sum_{T_{\g_m}\in \T_{\g_m}^b}  K^{-3}A_\e^{-(M-m+1)}\frac{\a}{R^\b} \| \s_{T_{\g_m}}f_{{\g_m}}^{m+1} \|_{L^\infty(\R^3)} \| \s_{T_{\g_m}}^{1/2}f_{{\g_m}}^{m+1} \|_{L^\infty(\R^3)}  \s_{T_{\g_m}}^{1/2}(x) \\
     & \lesssim K^{-3}A_\e^{-(M-m+1)}\frac{\a}{R^\b} \sum_{T_{\g_m}\in \T_{\g_m}^b} \| \s_{T_{\g_m}}^{1/2}f_{{\g_m}}^{m+1} \|_{L^\infty(\R^3)}^2 \s_{T_{\g_m}}^{1/2}(x) \\
     & \lesssim K^{-3}A_\e^{-(M-m+1)}\frac{\a}{R^\b}\sum_{T_{\g_m}\in \T_{\g_m}^b}
      \sum_{\tilde{T}_{{\g_m}}} \| \s_{T_{\g_m}}|f_{{\g_m}}^{m+1}|^2 \|_{L^\infty(\tilde{T}_{{\g_m}})} \s_{T_{\g_m}}^{1/2}(x) \\
     & \lesssim K^{-3}A_\e^{-(M-m+1)}\frac{\a}{R^\b} \sum_{T_{\g_m},\tilde{T}_{\g_m}\in \T_{\g_m}} \| \s_{T_{\g_m}}\|_{L^\infty(\tilde{T}_{\g_m})}\||f_{{\g_m}}^{m+1} |^2\|_{{L}^\infty(\tilde{T}_{{\g_m}})} \s_{T_{\g_m}}^{1/2}(x) .
\end{align*}
Let $c_{\tilde{T}_{\g_m}}$ denote the center of $\tilde{T}_{\g_m}$ and note the pointwise inequality
\[ \sum_{{T}_{\g_m}}\|\s_{T_{\g_m}}\|_{L^\infty(\tilde{T}_{\g_m})}\s_{T_{\g_m}}^{1/2}(x)\lesssim |\g_m^*|\w_{\g_m}(x-c_{\tilde{T}_{\g_m}}) ,\]
which means that
\begin{align*}
|f_{\g_m}^m(x)-f_{\g_m}^{m+1}(x)| & \lesssim K^{-3}A_\e^{-(M-m+1)}\frac{\a}{R^\b} |\g_m^*|\sum_{\tilde{T}_{\g_m}\in \T_{\g_m}} \w_{\g_m}(x-c_{\tilde{T}_{\g_m}})\||f_{{\g_m}}^{m+1} |^2\|_{{L}^\infty(\tilde{T}_{{\g_m}})} \\
&\lesssim_\e K^{-3}A_\e^{-(M-m+1)}\frac{\a}{R^\b}|\g_m^*| \sum_{\tilde{T}_{\g_m}\in \T_{\g_m}} \w_{\g_m}(x-c_{\tilde{T}_{\g_m}})|f_{{\g_m}}^{m+1} |^2*\w_{\g_m}(c_{\tilde{T}_{\g_m}})\\
&\lesssim_\e K^{-3}A_\e^{-(M-m+1)}\frac{\a}{R^\b} |f_{{\g_m}}^{m+1} |^2*\w_{\g_m}(x)
\end{align*}
where we used the locally constant property in the second to last inequality. The last inequality is justified by the fact that $\w_{\g_m}(x-c_{\tilde{T}_{\g_m}})\sim \w_{\g_m}(x-y)$ for any $y\in\tilde{T}_{\g_m}$, and we have the pointwise relation $\w_{\g_m}*\w_{\g_m}\lesssim \w_{\g_m}$. The last two inequalities incorporate a dependence on $\underline{C}_\e$ from Lemma \ref{pruneprop} since the locally constant property uses that $\widehat{|f_{\g_m}^{m+1}|^2}$ is supported in the $\underline{C}_\e$-dilation of $\g_m-\g_m$. It is important to note that $\underline{C}_\e$ is a combinatorial factor that does not depend on $A_\e$. 
Then 
\[
    |\sum_{\g_m\subset\tau}f_{\g_m}^m(x)-f_{\g_m}^{m+1}(x)|\lesssim_\e K^{-3}A_\e^{-(M-m+1)}\frac{\a}{R^\b}\sum_{\g_m\subset\tau}|f_{\g_m}^{m+1}|^2*\w_{\g_m}(x)\sim_\e K^{-3}A_\e^{-(M-m+1)}\frac{\a}{R^\b}g_m(x). \]
At this point, choose $A_\e$ large enough so that if $g_m(x)\le A_\e^{M-m}R^\b$, then the above inequality implies that
\[ |\sum_{\g_m\subset\tau}f_{\g_m}^m(x)-f_{\g_m}^{m+1}(x)|\le  \e K^{-3}A_\e^{-1/2}\a  .\]
This finishes the proof since $M+N\lesssim\e^{-1}$, so the number of steps from \eqref{prunedecomp} is controlled. 
The argument for the pruning on $\Omega_k$ and on $L$ is analogous.  
\end{proof}

\subsection{Geometry related to the high frequency parts of square functions \label{blocks}}

We have seen in Corollary \ref{highdom} that on $\Lambda_k$ and $\Omega_k$, $g_k$ and $G_k$ are high-dominated. In this subsection, we describe the geometry of the Fourier supports of $g_k^h$ and $G_k^h$, which will allow us to apply certain decoupling theorems for the cone in \textsection\ref{hisec}. We begin with the precise definitions of canonical blocks and small cap blocks (which we also call ``small caps") of the moment curve. 

\begin{definition}[Canonical moment curve blocks]
For $S\in 2^{\N}$, $S\ge10$, consider the anisotropic neighborhood 
\[\mc{M}^3(S)=\{(\xi_1,\xi_2,\xi_3): \xi_1\in[0,1],\,|\xi_2-\xi_1^2|\le S^{-2},\,|\xi_3-3\xi_1\xi_2+2\xi_1^3|\le S^{-3} \}.  \]
Define canonical moment curve blocks at scale $S$ which partition $\mc{M}^3(S)$ as follows:
\[ \bigsqcup\limits_{l=0}^{S-1}\{(\xi_1,\xi_2,\xi_3): lS^{-1}\le \xi_1<(l+1)S^{-1},\,|\xi_2-\xi_1^2|\le S^{-2},\,|\xi_3-3\xi_1\xi_2+2\xi_1^3|\le S^{-3} \}.  \]
\end{definition}

\begin{definition}[``Small caps" of the moment curve]
Let $R\ge 10$ and let $S\in 2^\N$ satisfy $R^{-1}\le S^{-1}\le R^{-\frac{1}{3}}$. Consider the anisotropic small cap neighborhood
\[ \mc{M}^3(S,R)=\{(\xi_1,\xi_2,\xi_3): \xi_1\in[0,1],\,|\xi_2-\xi_1^2|\le S^{-2},\,|\xi_3-3\xi_1\xi_2+2\xi_1^3|\le R^{-1} \} .\]
Define small caps $\g$ associated to the parameters $S$ and $R$ by  
\begin{equation}\label{momblocks}
    \sqcup\g=\bigsqcup\limits_{l=0}^{S-1}\{(\xi_1,\xi_2,\xi_3): lS^{-1}\le \xi_1<(l+1)S^{-1},\,|\xi_2-\xi_1^2|\le S^{-2},\,|\xi_3-3\xi_1\xi_2+2\xi_1^3|\le R^{-1} \}. 
\end{equation}
\end{definition}
Note that the small caps $\g$ are essentially canonical moment curve blocks at scale $S$ plus a vertical ($\xi_3$-direction) $R^{-1}$-neighborhood.

To analyze $g_k^h$, we need to understand the Fourier support of $\sum_{\g_k}|f_{\g_k}^{k+1}|^2$ outside of a cylinder of radius $r_{k+1}^{-1}$. By \eqref{item3} of Lemma \ref{pruneprop}, the support of $\widehat{|f_{\g_k}^{k+1}|^2}$ is $\underline{C}_\e\g_k-\underline{C}_\e\g_k$. Suppose that $\g_k$ is the $l$th piece, meaning that 
\[ \g_k=\{(\xi_1,\xi_2,\xi_3): lr_k^{-1}\le \xi_1<(l+1)r_k^{-1},\,|\xi_2-\xi_1^2|\le r_k^{-2},\,|\xi_3-3\xi_1\xi_2+2\xi_1^3|\le R^{-1} \}   \]
where $l\in\{0,\ldots,r_k-1\}$. The small cap $\g_k$ is comparable to the set 
\[ \underline{\g_k}=\{\g(lr_k^{-1})+A\g'(lr_k^{-1})+B\g''(lr_k^{-1})+C\g'''(lr_k^{-1}):0\le A\le r_k^{-1},\quad |B|\le r_k^{-2},\quad |C|\le R^{-1}\} \]
in the sense that $\frac{1}{20}{\underline{\g_k}}\subset\g_k\subset 20\underline{\g_k}$ (where the dilations are taken with respect to the centroid of $\g_k$).
Then $\g_k-\g_k$ is contained in 
\[ \{A\g'(lr_k^{-1})+B\g''(lr_k^{-1})+C\g'''(lr_k^{-1}):|A|\lesssim r_k^{-1},\quad |B|\lesssim r_k^{-2},\quad |C|\lesssim R^{-1}\}. \]
Recall that $1-\rho_{\le r_{k+1}^{-1}}$ is supported outside ${\mc{C}}_{r_{k+1}^{-1}}\supseteq\{|(\xi_1,\xi_2)|\le r_{k+1}^{-1}\}$. Intersecting $\underline{C}_\e\g_k-\underline{C}_\e\g_k$ with the support of $1-\rho_{\le r_{k+1}^{-1}}$ forces the relation $A^2+(A2(lr_k^{-1})+2B)^2\ge r_{k+1}^{-2}$. Using the upper bounds $|A|\lesssim \underline{C}_\e r_k^{-1}$ and $|B|\lesssim\underline{C}_\e r_k^{-2}$, it follows that for $R$ large enough depending on $\e$, the support of the high-frequency part of $\widehat{|f_{\g_k}^{k+1}|^2}$ is contained in 
\begin{align}\label{set} \tilde{\g}_k:=\{A\g'(lr_k^{-1})+&B\g''(lr_k^{-1})+C\g'''(lr_k^{-1}):\\ &\qquad\frac{1}{2}r_{k+1}^{-1}\le |A|\lesssim \underline{C}_\e r_k^{-1},
|B|\lesssim\underline{C}_\e r_k^{-2},\quad |C|\lesssim \underline{C}_\e R^{-1}\}. \nonumber\end{align}

Our ``high lemmas" will require geometric properties that are recorded in the following propositions.

\begin{proposition} \label{geo1}The sets $\tilde{\g}_k$, varying over $\g_k$, are $\le C_\e R^\e $-overlapping. 
\end{proposition}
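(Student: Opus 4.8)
The plan is to show that if a point $\eta\in\R^3$ lies in many of the sets $\tilde\g_k$ (as $\g_k$ ranges over small caps at scale $r_k$, with $k$ fixed), then those small caps $\g_k$ must all share essentially the same index $l$, up to $C_\e R^\e$ possibilities. First I would use the parametrization in \eqref{set}: a point in $\tilde\g_k$ coming from the $l$th cap is of the form $\eta = A\g'(t_l) + B\g''(t_l) + C\g'''(t_l)$ with $t_l = l r_k^{-1}$, $\tfrac12 r_{k+1}^{-1}\le |A|\lesssim \underline C_\e r_k^{-1}$, $|B|\lesssim \underline C_\e r_k^{-2}$, $|C|\lesssim \underline C_\e R^{-1}$. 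Since $\g(t)=(t,t^2,t^3)$, we have $\g'(t)=(1,2t,3t^2)$, $\g''(t)=(0,2,6t)$, $\g'''(t)=(0,0,6)$, so the first coordinate of $\eta$ is exactly $A$; thus $A$ is \emph{determined} by $\eta$ (not just up to a cap), and in particular $|A|\ge \tfrac12 r_{k+1}^{-1}$ is forced. The second coordinate gives $2At_l + 2B = \eta_2$, i.e. $t_l = \tfrac{\eta_2 - 2B}{2A}$; since $|B|\lesssim\underline C_\e r_k^{-2}$ and $|A|\gtrsim r_{k+1}^{-1}\ge r_k^{-1}R^{-\e}$ (because $r_{k+1}\le r_k R^\e$, up to the $8^\N$/$2^\N$ rounding), the ambiguity in $t_l$ coming from the range of $B$ at a fixed $\eta$ is $\lesssim \underline C_\e r_k^{-2}/|A| \lesssim \underline C_\e R^\e r_k^{-1}$. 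Since the allowed values $t_l = l r_k^{-1}$ are $r_k^{-1}$-separated, only $\lesssim \underline C_\e R^\e$ many indices $l$ are compatible with a given $\eta$.

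The one subtlety is that $A$ is only confined to a \emph{range} $[\tfrac12 r_{k+1}^{-1}, \underline C_\e r_k^{-1}]$ rather than being a single point, and similarly $B$ ranges over an interval --- but the point is that once $\eta$ is fixed, $A=\eta_1$ is pinned exactly, and then the constraint $t_l = (\eta_2-2B)/(2A)$ with $B$ ranging over its (now $\eta$-independent in size) interval forces $t_l$ into an interval of length $\lesssim \underline C_\e R^\e r_k^{-1}$ as computed above. One should double-check the edge case $\b=1$ (where $r_M = R^\b = R$ and the vertical $R^{-1}$-neighborhood is not genuinely thinner than the canonical $r_k^{-3}$ scale near the top), and also verify that the lower bound $|A|\ge \tfrac12 r_{k+1}^{-1}$ survives the rounding of $r_k$ to the nearest power of $2$ and $R_k$ to the nearest power of $8$; in both cases the losses are $R^{O(\e)}$ and get absorbed into $C_\e R^\e$. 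I would also note for cleanliness that the third coordinate $6A t_l^2 + 6B t_l + 6C = \eta_3$ then determines $C$ up to the same tolerance, but this is not needed for the overlap count --- two coordinates already suffice.

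The main obstacle, and the only place real care is needed, is bookkeeping the chain of inequalities $|A|\gtrsim r_{k+1}^{-1}\gtrsim R^{-\e} r_k^{-1}$ and $|B|\lesssim \underline C_\e r_k^{-2}$ to get $|B|/|A|\lesssim \underline C_\e R^\e r_k^{-1}$, and making sure the constant is a fixed power $R^{C\e}$ with $C$ depending only on combinatorial factors (so that it can be renamed $\e$ at the end). Concretely I would write: compatible indices $l$ satisfy $|l r_k^{-1} - (\eta_2-2B_0)/(2\eta_1)| \lesssim \underline C_\e R^\e r_k^{-1}$ for a fixed $B_0$, hence there are at most $C \underline C_\e R^\e$ of them, and since for fixed $l$ the sets $\tilde\g_k$ with that index are literally a single set, the total overlap at $\eta$ is $\le C\underline C_\e R^\e =: C_\e R^\e$, which is the claim.
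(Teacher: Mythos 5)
Your argument is correct and is essentially the paper's own proof: both pin down $A$ from the first coordinate, then use the second coordinate together with $|B|\lesssim \underline{C}_\e r_k^{-2}$ and $|A|\gtrsim r_{k+1}^{-1}\gtrsim R^{-\e}r_k^{-1}$ to conclude that only $O(\underline{C}_\e R^\e)$ values of $l$ are compatible. The only cosmetic difference is that you fix a point $\eta$ and count admissible indices, while the paper compares two parametrizations of a common point; the inequalities are identical.
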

\begin{proof} Suppose that a point corresponding to parameters $A,B,C,l$ and $A',B',C',l'$ respectively is in the intersection of two sets as in \eqref{set}. By analyzing the first coordinate, we must have $A=A'$. By analyzing the second coordinate, we must have
\[ |A2lr_k^{-1}-A2l'r_k^{-1}|\lesssim \underline{C}_\e r_k^{-2}.\]
Therefore, since $A\gtrsim r_{k+1}^{-1}$, $|l-l'|\lesssim \underline{C}_\e R^\e$. 
\end{proof}


Next we describe the geometry of a small cap partition for the cone. Let $\b_1\in[\frac{1}{2},1]$ and $\rho\ge 1$. Let $S\in 2^\N$ a dyadic number closest to $\rho^{\b_1}$. For the (truncated) cone $\Gamma=\{\xi:\xi_1^2+\xi_2^3=\xi_3^2,\quad\frac{1}{2}\le \xi_3\le 1\}$, divide $[0,2\pi)$ into $S$ many intervals $I_S$ of length $2\pi/S$ and define the small cap partition 
\[ \mc{N}_{S^{-1}}(\Gamma)=\underset{I_S}{\sqcup}\mc{N}_{S^{-1}}(\Gamma)\cap\{(\rho \cos\zeta,\rho\sin\zeta,z):\zeta\in I_S)\} \]
corresponding to parameters $\b_1$ and $\b_2=0$, as in Theorem 3 from \cite{ampdep}. After a linear transformation, we will identify the high parts of sets $\g_k-\g_k$ as subsets of conical small caps.

\begin{proposition} \label{geo2} Let $r^{-1}\in[r_{k+1}^{-1},20\underline{C}_\e r_k^{-1}]$ be a dyadic value and write $\{\xi_3\sim r^{-1}\}:=\{(\xi_1,\xi_2,\xi_3)\in\R^3:\frac{r^{-1}}{2}\le \xi_3\le r^{-1}\}$. There is an affine transformation $T:\R^3\to\R^3$ so that the following holds. 
\begin{enumerate}
    \item If $r_k^{-1}\le R^{-\frac{1}{2}}$, then the collection of $\g_k$ may be partitioned into $\lesssim_\e R^{2\e}$ many subsets $\mc{S}_i$ which satisfy the following. For each $\mc{S}_i$, there is a conical small cap partition of $\sim  1\times \underline{C}_\e \frac{r}{R}\times \underline{C}_\e\frac{r}{R}$ blocks so that for each $\g_k\in\mc{S}_i$, $r[T(\tilde{\g}_k)\cap\{\xi_3\sim r^{-1}\}]$ is completely contained in one of the conical small caps. 
    \item If $R^{-\frac{1}{2}}\le r_k^{-1}$ and $(Rr_k^{-1})^{-\b_1}=r_k^{-1}$ for some $\b_1\in[\frac{1}{2},1]$, then the collection of $\g_k$ may be partitioned into $\lesssim_\e R^{2\e}$ many subsets $\mc{S}_i$ which satisfy the following. For each $\mc{S}_i$, there is a conical small cap partition of $\sim  1\times \underline{C}_\e (\frac{r}{R})^{\b_1}\times \underline{C}_\e^{\b_1^{-1}}\frac{r}{R}$ blocks so that each $r[T(\tilde{\g}_k)\cap\{\xi_3\sim r^{-1}\}]$, where $\g_k\subset\mc{S}_i$, is completely contained in one of the conical small caps. 
\end{enumerate}
\end{proposition}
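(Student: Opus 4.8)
The plan is to reduce the statement to a concrete change of variables that linearizes the osculating frame of the moment curve at the point $lr_k^{-1}$, and then to track how the small cap $\tilde\g_k$ sits inside a neighborhood of the cone after that transformation. First I would set $t_0 = l r_k^{-1}$ and write the frame vectors $\g'(t_0),\g''(t_0),\g'''(t_0)$ explicitly: $\g'(t_0)=(1,2t_0,3t_0^2)$, $\g''(t_0)=(0,2,6t_0)$, $\g'''(t_0)=(0,0,6)$. Then $\tilde\g_k$ consists of points $A\g'(t_0)+B\g''(t_0)+C\g'''(t_0)$ with $\tfrac12 r_{k+1}^{-1}\le|A|\lesssim\underline C_\e r_k^{-1}$, $|B|\lesssim\underline C_\e r_k^{-2}$, $|C|\lesssim\underline C_\e R^{-1}$. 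The affine map $T$ I would choose is the inverse of the linear map sending the standard basis to $(\g'(t_0),\g''(t_0),\g'''(t_0))$ — but this depends on $l$, which is not allowed since $T$ must be a single map. The fix, and the reason the $R^{2\e}$ many subsets $\cS_i$ appear, is to group the $\g_k$ into $\lesssim R^{2\e}$ families on which $t_0$ is essentially constant (lies in a fixed interval of length $\sim R^{-\e}$ or so), pick one representative $t_0$ per family, and use the single transformation adapted to that representative; the distortion across the family is then controlled by a power of $R^\e$.

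With $T$ fixed, the next step is to exhibit $T(\tilde\g_k)\cap\{\xi_3\sim r^{-1}\}$ inside a conical small cap. After applying $T$ the variable in the $\g'$-direction becomes the "radial" variable $\xi_1$, which is $\sim A$, so restricting to $\xi_3\sim r^{-1}$ — equivalently $A\sim r^{-1}$ after the rescaling by $r$ — the set becomes, in rescaled coordinates, something of the form $\{\xi_1\sim 1\}$ with the $\g''$ and $\g'''$ directions carrying widths $|B|r\cdot r/r_k^{-2}$-type quantities. Here I need to compare $\tilde\g_k$ for nearby $l$: two such sets differ by a shift in the $\g''$-direction of size $\sim A\cdot r_k^{-1}|l-l'|$, which after the normalization corresponds to an angular variable, i.e. the $l$-index plays the role of the angle $\zeta\in I_S$ on the cone. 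The key computation is that the $\g''(t_0)$ and $\g'''(t_0)$ components, after scaling by $r$, have sizes $\sim r\cdot r_k^{-2}$ and $\sim r\cdot R^{-1}$, and these must match the $\b_1$- and $\b_1^{-1}$-exponent widths $(\tfrac rR)^{\b_1}$ and $(\tfrac rR)^{1/\b_1}$ claimed in case (2), using the defining relation $(Rr_k^{-1})^{-\b_1}=r_k^{-1}$; in case (1) (when $r_k^{-1}\le R^{-1/2}$, forcing $\b_1=1$ in the natural parametrization, or rather the flat regime) both widths become $\sim \tfrac rR$. One checks that $r$ ranging over $[r_{k+1}^{-1},20\underline C_\e r_k^{-1}]$ keeps these consistent with a single conical small cap scale determined by $\rho\sim Rr^{-1}$ (or the appropriate dilate), matching the parametrization of $\cN_{S^{-1}}(\Gamma)$ with $S\sim (Rr^{-1})^{\b_1}$ from Theorem 3 of \cite{ampdep}.

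Concretely, the steps in order: (i) record the frame vectors and the explicit form of $\tilde\g_k$; (ii) pigeonhole the $\g_k$ into $\lesssim_\e R^{2\e}$ subsets $\cS_i$ on which the basepoint $t_0$ varies by at most a harmless $R^\e$-factor, and within a subset also pigeonhole the scale $r$ into dyadic values (this is where the $R^{2\e}$, i.e. two factors of $R^\e$, comes from); (iii) define $T$ as the affine map normalizing the osculating frame at the representative basepoint of $\cS_i$, composed with the dilation by $r$ and a recentering, so that $\{\xi_3\sim r^{-1}\}$ becomes $\{\xi_1\sim 1\}$; (iv) compute the image of $\tilde\g_k\cap\{\xi_3\sim r^{-1}\}$, showing its $\xi_2,\xi_3$-extents are $\lesssim\underline C_\e(\tfrac rR)^{\b_1}$ and $\lesssim\underline C_\e^{1/\b_1}\tfrac rR$ respectively (in case (1), both $\lesssim\underline C_\e\tfrac rR$), and that the $l$-dependence of the $\xi_2$-center traces out an arc of the cone of angular width $\lesssim S^{-1}$ with $S\sim(Rr^{-1})^{\b_1}$; (v) conclude that each such image lies in a single cap of the partition $\cN_{S^{-1}}(\Gamma)$ from Theorem 3 of \cite{ampdep}, possibly after absorbing a bounded dilation into the $\underline C_\e$ constants.

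The main obstacle I anticipate is step (iv): verifying that the quadratic-in-$t_0$ corrections to the frame vectors, combined with the ranges of $B$ and $C$, genuinely reproduce the \emph{anisotropic} $(\tfrac rR)^{\b_1}$ versus $\tfrac rR$ splitting of the conical small caps rather than a rounder or more distorted region — in particular that the $\g'''$ direction, which is vertical and carries the $R^{-1}$-vertical-neighborhood thickness of $\g_k$, lines up with the correct (short) axis of the conical cap and does not leak into the longer axis. This requires being careful that the relation $(Rr_k^{-1})^{-\b_1}=r_k^{-1}$ is used consistently and that the intermediate scale $r$ (as opposed to the endpoint scales $r_k^{-1}$, $r_{k+1}^{-1}$) is handled uniformly; the flat case $r_k^{-1}\le R^{-1/2}$ is a degenerate limit where $\b_1\to 1$ and both axes coincide, which is why it is stated separately and is slightly easier. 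The pigeonholing in step (ii) is routine but needs to be arranged so the transformation $T$ is truly independent of $\g_k$ within a subset, which is the only subtle bookkeeping point.
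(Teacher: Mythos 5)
Your scale bookkeeping in step (iv) is essentially the right computation (the ranges of $A,B,C$ do match the anisotropic cap widths via $(Rr_k^{-1})^{-\b_1}=r_k^{-1}$, with the $C$-direction landing on the flat axis), but the core construction has a genuine gap: the proposition demands a \emph{single} affine map $T$ that works for every $\g_k$ simultaneously, and your fix — normalizing the osculating frame at one representative basepoint per family $\mc{S}_i$ — produces a different $T$ for each $\mc{S}_i$, so it does not prove the statement as written (and the subsequent application in Lemma \ref{high1} performs one global change of variables $x\mapsto A^*x$ using this single $T$ before splitting into the $\mc{S}_i$). The missing idea is that no basepoint-dependent normalization is needed at all: the set $\{A\g'(t)+B\g''(t)+C\g'''(t)\}$, as $t$ ranges over $[0,1]$, sits along the \emph{fixed} quadric cone over the tangent indicatrix $\g'(t)=(1,2t,3t^2)$, and one explicit linear map — in the paper, $T(x,y,z)=(\tfrac{y}{2},\tfrac{x-z/6}{\sqrt2},\tfrac{x+z/6}{\sqrt2})$ — carries this cone to the standard light cone once and for all. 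After that, the basepoint $lr_k^{-1}$ enters only through an angle $\w(\g_k)\in[\tfrac\pi4,\tfrac\pi2]$ with $(\cos\w,\sin\w)=(\tfrac{2\sqrt2 lr_k^{-1}}{2+l^2r_k^{-2}},\tfrac{2-l^2r_k^{-2}}{2+l^2r_k^{-2}})$, the three frame directions become exactly the generator, angular, and normal directions of $\Gamma$ at angle $\w$, and the partition into $\lesssim_\e R^{2\e}$ subsets is only needed to sort the $r_k^{-1}$-separated angles $\w(\g_k)$ into the cap partition (absorbing the $\underline{C}_\e$ dilations and boundary overlaps), not to freeze the basepoint.

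Relatedly, even granting one map per family, your $T_i$ sends the frame at the representative $t_0$ to the standard basis, so the images lie near the quadric cone $\{\xi_2^2=2\xi_1\xi_3\}$, not near the circular cone $\Gamma=\{\xi_1^2+\xi_2^2=\xi_3^2,\ \tfrac12\le\xi_3\le1\}$ whose small cap partition $\mc{N}_{S^{-1}}(\Gamma)$ (Theorem 3 of \cite{ampdep}) you invoke in step (v); you would still need to compose with a fixed linear identification of the two cones and check that caps go to comparable caps — and once you add that map you discover the per-family normalization was superfluous. Two smaller points: the dyadic pigeonholing in $r$ is not part of this proposition (here $r$ is fixed in the hypothesis; the pigeonholing over $r$ happens inside Lemma \ref{high1}), so your accounting of the $R^{2\e}$ factor is off; and the localization $\{\xi_3\sim r^{-1}\}$ in the statement is applied \emph{after} $T$, so $T$ must be arranged so that the radial variable is the third coordinate, which your "$\{\xi_3\sim r^{-1}\}$ becomes $\{\xi_1\sim1\}$" normalization does not respect as stated.
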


\begin{proof} Let $T:\R^3\to\R^3$ be the affine transformation
\[ T(x,y,z):=(\frac{y}{2},\frac{x-\frac{z}{6}}{\sqrt{2}},\frac{x+\frac{z}{6}}{\sqrt{2}}). \]
The image of the set \eqref{set} under $T$ is
\begin{align*}
T(\tilde{\g}_k)=\{A(lr_k^{-1},\frac{1-\frac{l^2r_k^{-2}}{2}}{\sqrt{2}},\frac{1+\frac{l^2r_k^{-2}}{2}}{\sqrt{2}})+&B(1,\frac{-lr_k^{-1}}{\sqrt{2}},\frac{lr_k^{-1}}{\sqrt{2}})+C(0,\frac{-1}{\sqrt{2}},\frac{1}{\sqrt{2}}):\\
&\frac{1}{2}r_{k+1}^{-1}\le |A|\lesssim \underline{C}_\e r_k^{-1},\quad |B|\lesssim \underline{C}_\e r_k^{-2},\quad |C|\lesssim  \underline{C}_\e R^{-1}\}.
\end{align*}
Defining $\w\in[\pi/4,\pi/2]$ by $(\cos\w,\sin\w)=(\frac{2\sqrt{2}lr_k^{-1}}{2+l^2r_k^{-2}},\frac{2-l^2r_k^{-2}}{2+l^2r_k^{-2}})$, the set $T(\tilde{\g}_k)$ is contained in  
\begin{align} \label{set2}
\{A(\cos\w,\sin\w,1)+&B(\sin\w,-\cos\w,0)+C(\cos\w,\sin\w,-1):\\
&r_{k+1}^{-1}\le |A|\lesssim \underline{C}_\e r_k^{-1},\quad |B|\lesssim \underline{C}_\e(r_k^{-2}+R^{-1}),\quad |C|\lesssim \underline{C}_\e R^{-1}\} \nonumber.
\end{align}

Suppose that $r_k^{-1}\le R^{-\frac{1}{2}}$. Then 
\begin{align} \label{set3}
T(\tilde{\g_k})\cap\{\xi_3\sim r^{-1}\}\subset\{A(\cos\w,\sin\w,1)+&B(\sin\w,-\cos\w,0)+C(\cos\w,\sin\w,-1):\\
&\frac{r^{-1}}{2}\le |A|\le r^{-1},\quad |B|\lesssim  \underline{C}_\e R^{-1},\quad |C|\lesssim  \underline{C}_\e R^{-1}\} \nonumber.
\end{align}
The $\w=\w(\g_k)$ in \eqref{set2} form an $\sim r_k^{-1}$-separated subset of $[\frac{\pi}{4},\frac{\pi}{2}]$. For a dyadic $S$ closest to $\underline{C}_\e R/r$, we may sort the $\w(\g_k)$ into different intervals $I_S\subset[0,2\pi)$ and note that the $r$ dilation of $T(\tilde{\g}_k)\cap\{\xi_3\sim r^{-1}\}$ for $\w(\g_k)\in I_S$ is contained in a single $\sim1\times S^{-1}\times S^{-1}$ conical small cap.

Now suppose that $R^{-\frac{1}{2}}\le r_k^{-1}\le R^{-\frac{1}{3}}$. Then 
\begin{align} \label{set4}
T(\tilde{\g_k})\cap\{\xi_3\sim r^{-1}\}\subset\{A(\cos\w,\sin\w,1)+&B(\sin\w,-\cos\w,0)+C(\cos\w,\sin\w,-1):\\
&\frac{r^{-1}}{2}\le |A|\le r^{-1},\quad |B|\lesssim \underline{C}_\e r_k^{-2},\quad |C|\lesssim  \underline{C}_\e R^{-1}\} \nonumber.
\end{align}
Let $S\in 2^\N$ be chosen so $S^{-\b_1}$ is the smallest dyadic number satisfying $\underline{C}_\e R^{\e}r_k^{-1}\le S^{-\b_1}$ (recalling that $\b_1$ is defined by $(Rr_k^{-1})^{-\b_1}=r_k^{-1}$ in the proposition statement). Then $\underline{C}_\e^{\b_1^{-1}}R^{\e\b_1^{-1}}r_kR^{-1}\le S$ and so each $r$-dilation of $T(\tilde{\g_k})\cap\{\xi_3\sim r^{-1}\}$ is contained in a single approximate $1\times S^{-\b_1}\times S^{-1}$ conical small cap. 

\end{proof}

To analyze $G_k^h$, we need to understand the Fourier support of $\sum_{\tau_k}|F_{\tau_k}^{k+1}|^2$ outside of a low set ${\mc{C}}_{R_{k+1}^{-\frac{1}{3}}}$. By \eqref{item3} of Lemma \ref{pruneprop}, the support of $\widehat{|F_{\g_k}^{k+1}|^2}$ is contained in  $\underline{C}_\e\tau_k-\underline{C}_\e\tau_k$. 

\begin{proposition} \label{geo3} Let $r$ be a dyadic value, $R_{k+1}^{-\frac{1}{3}}\le r^{-1}\le \underline{C}_\e R_k^{-\frac{1}{3}}$. There is an affine transformation $T:\R^3\to\R^3$ so that the following holds. We may partition the $\tau_k$ into $\lesssim_\e R^\e$ many sets $\mc{S}_i$ which satisfy: there is a canonical partition of the cone into approximate $1\times \underline{C}_\e rR_k^{-\frac{2}{3}}\times \underline{C}_\e^2r^2R_k^{-\frac{4}{3}}$ blocks so that for each $\tau_k\in\mc{S}_i$, the $r$-dilation of the sets $T[(\underline{C}_\e \tau_k-\underline{C}_\e\tau_k)\setminus B_{R_{k+1}^{-\frac{1}{3}}}]\cap\{\xi_3\sim r^{-1}\}$ is contained in one of the canonical cone blocks. \end{proposition}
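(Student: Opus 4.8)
The plan is to follow the route of the proof of Proposition \ref{geo2}, now working with the canonical blocks $\tau_k$ in place of the small caps $\g_k$ and with the canonical (rather than small cap) decomposition of the cone. Write $\g(t)=(t,t^2,t^3)$ and suppose $\tau_k$ is the $l$-th piece of the canonical partition at scale $R_k^{1/3}$, with representative point $c=lR_k^{-1/3}$. As recalled in \textsection\ref{blocks}, $\tau_k$ is comparable to $\{\g(c)+A\g'(c)+B\g''(c)+C\g'''(c):0\le A\le R_k^{-1/3},\ |B|\le R_k^{-2/3},\ |C|\le R_k^{-1}\}$, so by \eqref{item3} of Lemma \ref{pruneprop} the difference set $\underline{C}_\e\tau_k-\underline{C}_\e\tau_k$ is contained in
\[ \{A\g'(c)+B\g''(c)+C\g'''(c):|A|\lesssim \underline{C}_\e R_k^{-1/3},\ |B|\lesssim \underline{C}_\e R_k^{-2/3},\ |C|\lesssim \underline{C}_\e R_k^{-1}\}. \]
I would then apply the same affine map $T(x,y,z)=(\tfrac y2,\tfrac{x-z/6}{\sqrt2},\tfrac{x+z/6}{\sqrt2})$ used in Proposition \ref{geo2}; the computation there (with $\w=\w(l)\in[\tfrac\pi4,\tfrac\pi2]$ determined by $(\cos\w,\sin\w)=(\tfrac{2\sqrt2\,c}{2+c^2},\tfrac{2-c^2}{2+c^2})$) shows that $T(\underline{C}_\e\tau_k-\underline{C}_\e\tau_k)$ lies inside the box
\[ \{A(\cos\w,\sin\w,1)+B(\sin\w,-\cos\w,0)+C(\cos\w,\sin\w,-1):|A|\lesssim \underline{C}_\e R_k^{-1/3},\ |B|\lesssim \underline{C}_\e R_k^{-2/3},\ |C|\lesssim \underline{C}_\e R_k^{-1}\}, \]
whose three sides point in the radial, angular, and normal directions of the cone $\Gamma$ along the generator in direction $\w$.

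Next I would intersect with $\{\xi_3\sim r^{-1}\}$ and discard $B_{R_{k+1}^{-1/3}}$; the latter only shrinks the set, so it can be ignored. Since the $\xi_3$-coordinate of the box above is $A-C$ and $|C|\lesssim \underline{C}_\e R_k^{-1}$ while $r^{-1}\ge R_{k+1}^{-1/3}$, the scale relations $R_{k+1}\sim R_kR^\e$, $R_k\gtrsim R^\e$, and $\underline{C}_\e\lesssim\e^{-2}$ give $|C|\ll r^{-1}$ for $R$ large depending on $\e$; hence $\xi_3\sim r^{-1}$ forces $|A|\sim r^{-1}$, which is consistent with $|A|\lesssim \underline{C}_\e R_k^{-1/3}$ precisely because $r^{-1}\le \underline{C}_\e R_k^{-1/3}$. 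Dilating by $r$, the resulting set has $|rA|\sim1$, $|rB|\lesssim \underline{C}_\e rR_k^{-2/3}$, $|rC|\lesssim \underline{C}_\e rR_k^{-1}$: it lies within distance $\lesssim \underline{C}_\e rR_k^{-1}$ of the tangent plane of $\Gamma$ along the generator $\w(l)$ and within angular distance $\lesssim \underline{C}_\e rR_k^{-2/3}$ of that generator. Because $r^{-1}\le \underline{C}_\e R_k^{-1/3}$ forces $\underline{C}_\e rR_k^{-1}\le \underline{C}_\e^2r^2R_k^{-4/3}=\underline{C}_\e^2(rR_k^{-2/3})^2$, and because $\Gamma$ deviates from its tangent plane by $O((\text{angular width})^2)$ over an arc of height $\sim1$, the whole dilated set sits in the $\underline{C}_\e^2r^2R_k^{-4/3}$-neighborhood of an angular sector of width $\lesssim \underline{C}_\e rR_k^{-2/3}$, i.e., inside a single canonical cone block at angular scale $S^{-1}\sim \underline{C}_\e rR_k^{-2/3}$.

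Finally, I would fix such a dyadic $S$ together with the associated canonical partition of $\Gamma$ into $\sim 1\times S^{-1}\times S^{-2}$ blocks. The center angles $\w(l)$ are $\sim R_k^{-1/3}$-separated in $[\tfrac\pi4,\tfrac\pi2]$, and each $r$-dilated set occupies an angular window of width $\lesssim \underline{C}_\e rR_k^{-2/3}$ about $\w(l)$; sorting the $\w(l)$ into the intervals of the partition, and applying the standard pigeonholing over $\lesssim_\e R^\e$ translated copies of the partition to handle windows that straddle an interval boundary (enlarging the corresponding cone blocks by a harmless constant), one obtains the desired decomposition of the $\tau_k$ into $\lesssim_\e R^\e$ classes $\mc{S}_i$. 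I expect the only real work to be this scale bookkeeping --- checking that the hypotheses $R_{k+1}^{-1/3}\le r^{-1}\le \underline{C}_\e R_k^{-1/3}$ are exactly what make $|C|$ negligible against $r^{-1}$ and make the normal thickness $\underline{C}_\e rR_k^{-1}$ fit inside the canonical thickness $\underline{C}_\e^2r^2R_k^{-4/3}$ --- together with the routine boundary pigeonholing; the geometry of $T$ and of the cone is already supplied by the proof of Proposition \ref{geo2}.
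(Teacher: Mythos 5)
Your proposal is correct and follows essentially the same route as the paper's proof: apply the affine map $T$ from Proposition \ref{geo2} to the box $\{A\g'(lR_k^{-1/3})+B\g''(lR_k^{-1/3})+C\g'''(lR_k^{-1/3})\}$ containing $\underline{C}_\e\tau_k-\underline{C}_\e\tau_k$, intersect with $\{\xi_3\sim r^{-1}\}$ to pin $|A|\sim r^{-1}$, dilate by $r$, and use the $\sim R_k^{-1/3}$-separation of the angles $\w(\tau_k)$ to sort the $\tau_k$ into $\lesssim_\e R^\e$ classes landing in single canonical $1\times \underline{C}_\e rR_k^{-2/3}\times \underline{C}_\e^2r^2R_k^{-4/3}$ cone blocks. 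Your added scale check $\underline{C}_\e rR_k^{-1}\le \underline{C}_\e^2 r^2R_k^{-4/3}$ (equivalent to $r^{-1}\le\underline{C}_\e R_k^{-1/3}$) makes explicit a point the paper leaves implicit, but the argument is the same.
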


\begin{proof} Suppose that $\tau_k$ is the $l$th piece, meaning that 
\[ \tau_k=\{(\xi_1,\xi_2,\xi_3): lR_k^{-\frac{1}{3}}\le \xi_1<(l+1)R_k^{-\frac{1}{3}},\,|\xi_2-\xi_1^2|\le R_k^{-\frac{2}{3}},\,|\xi_3-3\xi_1\xi_2+2\xi_1^3|\le R_k^{-1} \}   \]
where $l\in\{0,\ldots,R_k^{\frac{1}{3}}-1\}$. Let $T$ be the affine transformation from the proof of Proposition \ref{geo2}. Then $T[(\underline{C}_\e \tau_k-\underline{C}_\e\tau_k)\setminus B_{R_{k+1}^{-\frac{1}{3}}}]\cap\{\xi_3\sim r^{-1}\}$ is contained in the set 
\begin{align*}
\{A(\cos\w,\sin\w,1)+&B(\sin\w,-\cos\w,0)+C(\cos\w,\sin\w,-1):\\
&\frac{r^{-1}}{2}\le |A|\le r^{-1},\quad |B|\lesssim \underline{C}_\e R_k^{-\frac{2}{3}},\quad |C|\lesssim \underline{C}_\e R_k^{-1}\} \nonumber.
\end{align*}
where $\w\in[\frac{\pi}{4},\frac{\pi}{2}]$ is defined by $(\cos\w,\sin\w)=(\frac{2\sqrt{2}lR_k^{-\frac{1}{3}}}{2+l^2R_k^{-\frac{2}{3}}},\frac{2-l^2R_k^{-\frac{2}{3}}}{2+l^2R_k^{-\frac{2}{3}}})$. Since the $\w=\w(\tau_k)$ form an $\sim R_k^{-\frac{1}{3}}$-separated set, the $r$-dilation of each displayed set above is contained in a canonical cone block of approximate dimensions $1\times \underline{C}_\e rR_k^{-\frac{2}{3}}\times \underline{C}_\e^2r^2R_k^{-\frac{4}{3}}$.

\end{proof}

\subsection{Lemmas related to the high frequency parts of square functions \label{hisec}}

First we recall the small cap decoupling theorem for the cone from \cite{ampdep}. Subdivide the $R^{-1}$ neighborhood of the truncated cone $\Gamma=\{(\xi_1,\xi_2,\xi_3):\xi_1^2+\xi_2^2=\xi_3^2,\quad\frac{1}{2}\le \xi_3\le 1\}$ into $R^{-\b_2}\times R^{-\b_1}\times R^{-1}$ small caps $\g$, where $\b_1\in[\frac{1}{2},1]$ and $\b_2\in[0,1]$. Here, $R^{-\b_2}$ corresponds to the flat direction of the cone and $R^{-\b_1}$ corresponds to the angular direction. The $(\ell^p,L^p)$ small cap theorem for $\Gamma$ is the following. 
\begin{theorem}\label{conesmallcapthm}[Theorem 3 from \cite{ampdep}] Let $\b_1\in[\frac{1}{2},1]$ and $\b_2\in[0,1]$. For $p\ge 2$, 
\[ \int_{\R^3}|f|^p\le C_\e R^\e(R^{(\b_1+\b_2)(\frac{p}{2}-1)}+R^{(\b_1+\b_2)(p-2)-1}+R^{(\b_1+\b_2-\frac{1}{2})(p-2)})\sum_\g\|f_\g\|_{L^p(\R^3)}^p\]
for any Schwartz function $f:\R^3\to\C$ with Fourier transform supported in $\mc{N}_{R^{-1}}(\Gamma)$. 
\end{theorem}

\begin{lemma}[High lemma I]\label{high1} Suppose that $R^{-\b}\le r_k^{-1}\le R^{-\frac{1}{2}}$. Then
\[ \int|g_k^h|^{4} \le C_\e R^\e r_k^{-1}R\sum_{\z}\|\sum_{\g_k\subset\zeta}|f_{\g_k}^{k+1}|^2*\w_{\g_k}*\widecheck{\rho}_{>r_{k+1}^{-1}}\|_{L^4(\R^3)}^4 \]
where the $\z$ are disjoint collections of $r_k^2R^{-1}$ many adjacent $\g_k$. 
\end{lemma}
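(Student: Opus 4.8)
The claim is an $L^4$ decoupling estimate for the high-frequency part $g_k^h$ into groups $\zeta$ of $r_k^2 R^{-1}$ adjacent small caps. The strategy is to recognize $g_k^h$ as a function whose Fourier transform lives on (a linear image of) a piece of the $R^{-1}$-neighborhood of the cone, and to invoke the small cap decoupling theorem for the cone (Theorem \ref{conesmallcapthm}), combined with the overlap and geometric packaging statements from \textsection\ref{blocks}. First I would decompose $g_k^h = g_k - g_k^\ell$ dyadically in the $\xi_3$-variable: by the definition of $\rho_{\le r_{k+1}^{-1}}$ and the locally constant property, $\widehat{g_k^h}$ is supported where $|(\xi_1,\xi_2)| \gtrsim r_{k+1}^{-1}$, and since each $\widehat{|f_{\g_k}^{k+1}|^2 \ast \w_{\g_k}}$ is supported in the set $\tilde\g_k$ of \eqref{set} (after multiplying by $\widecheck\rho_{>r_{k+1}^{-1}}$), each such piece is supported in a region with $\xi_3$-extent $\sim R^{-1}$ and $|(\xi_1,\xi_2)|$ ranging over a dyadic band $|A|\sim r^{-1}$ with $r^{-1}\in[r_{k+1}^{-1}, 20\underline C_\e r_k^{-1}]$. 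There are only $O(\log R) = O_\e(R^\e)$ such dyadic bands, so by the triangle inequality and Hölder it suffices to prove the estimate for a single band $\{\xi_3 \sim r^{-1}\}$, at the cost of an $R^\e$ factor.

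**Main step: transfer to the cone.** Fix a dyadic band with parameter $r$. Applying the affine transformation $T$ from Proposition \ref{geo2}, the pieces $r[T(\tilde\g_k) \cap \{\xi_3 \sim r^{-1}\}]$ are each contained in a single conical small cap of the relevant scale (the $1 \times S^{-1} \times S^{-1}$ type when $r_k^{-1} \le R^{-1/2}$; here $r_k^{-1} \le R^{-1/2}$ is exactly the hypothesis of the lemma), and by Proposition \ref{geo1} these images are $O_\e(R^\e)$-overlapping; moreover by Proposition \ref{geo2} the $\g_k$ split into $O_\e(R^\e)$ subfamilies on each of which the images lie in distinct caps of an honest conical small cap partition with $S \sim \underline C_\e R/r$. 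Since affine transformations and dilations preserve $L^p$ ratios (up to Jacobian factors which are absorbed into the implicit constants), I apply Theorem \ref{conesmallcapthm} with $p=4$, angular parameter $\b_1$ determined by $R^{-\b_1}=S^{-1} \sim r/R$, and flat parameter $\b_2 = 0$. The three terms in the cone bound are $R^{\b_1(\frac p2 -1)} = S^{-1} \sim r/R$, $R^{\b_1(p-2)-1} = S^{-2}R^{-1}$, and $R^{(\b_1 - 1/2)(p-2)} = S^{-2}R$; in the regime $R^{-\b}\le r_k^{-1}\le R^{-1/2}$, i.e. $S = R/r$ with $r \in [r_k, r_{k+1}]$, one checks the dominant term is $S^{-2}R = r^2/R$. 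Tracking this through gives a decoupling constant $\sim r^2/R$ for each conical cap; since each conical cap at scale $S^{-1}$ collects the contributions of $\sim r_k^2 R^{-1} \cdot (\text{something})$—more precisely, the number of $\g_k$ whose angle $\w(\g_k)$ falls in a fixed $I_S$ is $\sim S^{-1}/r_k^{-1} = r_k/(R/r) = r r_k/R$, and after accounting for the width $r^{-1}$ vs $r_k^{-1}$—one identifies these collections with the $\zeta$ of the statement (disjoint unions of $r_k^2 R^{-1}$ adjacent $\g_k$). Multiplying the per-band decoupling constant $r^2/R$ by the worst case $r = r_k$ and reorganizing yields the claimed $r_k^{-1}R$; wait—more carefully, the product of the cone constant with the number of bands and the reindexing gives $\lesssim_\e R^\e r_k^{-1} R$, matching the statement.

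**Expected main obstacle.** The delicate point is the bookkeeping that matches the conical small cap partition produced by Proposition \ref{geo2} to the grouping into $\zeta$'s of exactly $r_k^2 R^{-1}$ adjacent $\g_k$, and verifying that the decoupling constant coming out of Theorem \ref{conesmallcapthm} (which of its three terms dominates, and how $S$, $r$, $r_k$, $r_{k+1}$, $R$ interrelate) really collapses to $r_k^{-1}R$ and not something larger. One must be careful that the $L^4$ norms on the right are of $\sum_{\g_k \subset \zeta}|f_{\g_k}^{k+1}|^2 \ast \w_{\g_k}\ast \widecheck\rho_{>r_{k+1}^{-1}}$ (the full $\zeta$-block of the high part), which is what the cone decoupling outputs once we undo the dyadic-band decomposition: reassembling the bands requires another application of the triangle inequality in $\ell^4$, harmless up to $R^\e$. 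A secondary subtlety, flagged already in \textsection\ref{highsec}, is the $\b = 1$ edge case where cylinders rather than balls are needed to excise the low set; but that issue is about the shape of ${\mc{C}}_{r_{k+1}^{-1}}$ and does not affect the $r_k^{-1}\le R^{-1/2}$ regime treated in this lemma. Everything else—Plancherel, the support computations, the locally-constant reductions—is routine given the machinery already assembled in \textsection\ref{tools} and \textsection\ref{blocks}.
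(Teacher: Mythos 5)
Your roadmap coincides with the paper's: conjugate by the affine map $T$ of Proposition \ref{geo2}, pigeonhole into dyadic bands $\{\xi_3\sim r^{-1}\}$ with $r^{-1}\in[r_{k+1}^{-1},20\underline{C}_\e r_k^{-1}]$, dilate by $r$, apply small cap decoupling for the cone with $\b_2=0$, and undo the cutoff and the changes of variables via Young's inequality. The gap is in the one step that actually produces the constant $r_k^{-1}R$: your application of Theorem \ref{conesmallcapthm} is mis-parametrized. After the $r$-dilation, the Fourier support of the band piece lies in a neighborhood of the truncated cone of thickness $\sim\underline{C}_\e r/R$, so the theorem must be applied with scale parameter $\sim R/r$ and with $\b_1=1$, $\b_2=0$ (this is exactly what the paper does); then all three exponents give the same factor $\sim R/r\lesssim_\e R^{O(\e)}r_k^{-1}R$, since $r\gtrsim\underline{C}_\e^{-1}r_k$. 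You instead keep the scale equal to $R$ and define $\b_1$ by $R^{-\b_1}=S^{-1}\sim r/R$. In the regime of this lemma $r\sim r_k\ge R^{1/2}$, so that $\b_1\le\frac12$, which violates the hypothesis $\b_1\in[\frac12,1]$ of Theorem \ref{conesmallcapthm}; moreover at scale $R$ the theorem requires Fourier support in the $R^{-1}$-neighborhood of the unit-height truncated cone, which is not the support you have either before or after the $r$-dilation.

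Second, even within your parametrization the three terms are evaluated incorrectly: with $R^{\b_1}=S$ and $p=4$ they are $R^{\b_1}=S$, $R^{2\b_1-1}=S^2R^{-1}$, and $R^{2\b_1-1}=S^2R^{-1}$ (you swapped $S$ and $S^{-1}$ throughout), and your claimed dominant term $r^2/R\sim r_k^2R^{-1}$ is not the constant in the lemma: $r_k^2R^{-1}=r_k^{-1}R$ only when $r_k=R^{2/3}$, and it is strictly larger when $r_k>R^{2/3}$, which occurs for $\b>\frac23$. The closing sentence ``reorganizing yields the claimed $r_k^{-1}R$'' is an assertion rather than a derivation; no accounting of the $O(\log R)$ bands or of the regrouping into $\z$'s converts $r^2/R$ into $R/r$. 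Once the application is reparametrized as above (scale $\underline{C}_\e^{-1}R/r$, $\b_1=1$, $\b_2=0$), your count of $\g_k$ per conical cap ($\sim rr_k/R\sim r_k^2R^{-1}$) does identify the groups $\z$ correctly, the $\le C_\e R^\e$ overlap from Proposition \ref{geo1} is as you say, and the remaining steps (removing $\eta_r$ by Young's inequality, undoing $T$) go through as in the paper.
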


\begin{proof} Let $T$ be the affine transformation from Proposition \ref{geo2} and write $Tx=Ax+b$ for a $3\times 3$ invertible matrix $A$ and $b\in\R^3$. Then 
\begin{equation}\label{formg} g_k^h(x)=|\det A|^{-1}e^{-2\pi i x\cdot b}(\widehat{g_k^h}\circ T^{-1})^{\widecheck{\,\,\,}}((A^{-1})^* x) . \end{equation}
Perform the change of variables $x\mapsto A^*x$ to get
\[ \int|g_k^h(x)|^4dx=|\det A|^{-3}\int|(\widehat{g_k^h}\circ T^{-1})^{\widecheck{\,\,\,}}(x)|^4dx. \]
Let $r$ be a dyadic parameter in the range $r_{k+1}^{-1}\le r^{-1}\le\underline{C}_\e r_k^{-1}$. Let $\eta_r:\R^3\to[0,\infty)$ be a smooth function with compact support in the set $\{(\xi_1,\xi_2,\xi_3):\frac{r^{-1}}{2}\le \xi_3\le r^{-1}\}=:\{\xi_3\sim r^{-1}\}$ and satisfying the property that the sum of $\eta_r$ over dyadic $r$ is identically $1$ on the support of $\widehat{g_k^h}\circ T^{-1}$. By dyadic pigeonholing, there is an $r$ so that
\[ |\det A|^{-3}\int|(\widehat{g_k^h}\circ T^{-1})^{\widecheck{\,\,\,}}(x)|^4dx\le C_\e (\log R)^4|\det A|^{-3}\int|((\widehat{g_k^h}\circ T^{-1})\eta_r)^{\widecheck{\,\,\,}}(x)|^4dx.\]
Finally, perform the change of variables $x\mapsto rx$ to get
\[ |\det A|^{-3}r^3\int|((\widehat{g_k^h}\circ T^{-1})\eta_r)^{\widecheck{\,\,\,}}(rx)|^4dx. \]
Now, note that 
\begin{align*}  
((\widehat{g_k^h}\circ T^{-1})\eta_r)^{\widecheck{\,\,\,}}(rx)&=\sum_{\g_k}[(\widehat{|f_{\g_k}^{k+1}|^2}\widehat{\w_{\g_k}}(1-\rho_{\le r_{k+1}^{-1}}))\circ T^{-1}\cdot \eta_r]^{\widecheck{\,\,\,}}(rx)\\
&=\sum_i\sum_{\g_k\in\mc{S}_i}[(\widehat{|f_{\g_k}^{k+1}|^2}\widehat{\w_{\g_k}}(1-\rho_{\le r_{k+1}^{-1}}))\circ T^{-1}\cdot\eta_r]^{\widecheck{\,\,\,}}(rx) \end{align*} 
where $\mc{S}_i$ is one of the $\lesssim_\e R^\e$ many sets partitioning the $\g_k$ from (1) of Proposition \ref{geo2}. Apply the triangle inequality in the first sum over $i$ and then apply Theorem \ref{conesmallcapthm} with parameters $\underline{C}_\e^{-1}\frac{R}{r}$, $\b_1=1$, and $\b_2=0$ to obtain 
\begin{align*}
    \int|g_k^h|^4&\lesssim_\e (\log R)R^{6\e} (r_k^{-1}R)|\det A|^{-3}r^3\sum_{\zeta}\int |\sum_{\g_k\subset\z}[(\widehat{|f_{\g_k}^{k+1}|^2}\widehat{\w_{\g_k}}(1-\rho_{\le r_{k+1}^{-1}}))\circ T^{-1}\cdot\eta_r]^{\widecheck{\,\,\,}}(rx)|^4dx
\end{align*}
where $\zeta$ are disjoint collections of $\sim r_k^{-2}R$ many neighboring $\g_k$. It remains to undo the initial steps which allowed us to apply small cap decoupling for the cone. First do the change of variables $x\mapsto r^{-1}x$:
\[ r^3\sum_{\zeta}\int |\sum_{\g_k\subset\z}[(\widehat{|f_{\g_k}^{k+1}|^2}\widehat{\w_{\g_k}}(1-\rho_{\le r_{k+1}^{-1}}))\circ T^{-1}\cdot\eta_r]^{\widecheck{\,\,\,}}(rx)|^4dx=\sum_{\zeta}\int |\sum_{\g_k\subset\z}[(\widehat{|f_{\g_k}^{k+1}|^2}\widehat{\w_{\g_k}}(1-\rho_{\le r_{k+1}^{-1}}))\circ T^{-1}\cdot\eta_r]^{\widecheck{\,\,\,}}(x)|^4dx. \]
By Young's convolution inequality (since multiplication on the Fourier side by $\eta_r$ is equivalent to convolution on the spatial side by $\widecheck{\eta}_r$, which is $L^1$-normalized),
\[ \sum_{\zeta}\int |\sum_{\g_k\subset\z}[(\widehat{|f_{\g_k}^{k+1}|^2}\widehat{\w_{\g_k}}(1-\rho_{\le r_{k+1}^{-1}}))\circ T^{-1}\cdot\eta_r]^{\widecheck{\,\,\,}}|^4 \lesssim \sum_{\zeta}\int |\sum_{\g_k\subset\z}[(\widehat{|f_{\g_k}^{k+1}|^2}\widehat{\w_{\g_k}}(1-\rho_{\le r_{k+1}^{-1}}))\circ T^{-1}]^{\widecheck{\,\,\,}}|^4. \]
Perform the change of variables $x\mapsto (A^{-1})^{*}x$ and use \eqref{formg} to get
\[ |\det A|^3\sum_{\zeta}\int |\sum_{\g_k\subset\z}[(\widehat{|f_{\g_k}^{k+1}|^2}\widehat{\w_{\g_k}}(1-\rho_{\le r_{k+1}^{-1}}))\circ T^{-1}]^{\widecheck{\,\,\,}}|^4\lesssim \sum_{\zeta}\int |\sum_{\g_k\subset\z}[\widehat{|f_{\g_k}^{k+1}|^2}\widehat{\w_{\g_k}}(1-\rho_{\le r_{k+1}^{-1}})]^{\widecheck{\,\,\,}}|^4, \]
which finishes the proof. 

\end{proof}

\begin{lemma}[High lemma II]\label{high2} Suppose that $\max(R^{-\b},R^{-\frac{1}{2}})\le r_k^{-1}\le R^{-\frac{1}{3}}$. Then
\[ \int|g_k^h|^{2+\frac{2}{\b_1}} \le C_\e R^{14\e} r_kR\sum_{\g_k}\|f_{\g_k}^{k+1}\|_{L^{4+\frac{4}{\b_1}}(\R^3)}^{4+\frac{4}{\b_1}} \]
where $\b_1\in[\frac{1}{2},1]$ satisfies $(r_kR)^{-\b_1}=r_k$. \end{lemma}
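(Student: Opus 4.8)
The plan is to run the proof of Lemma \ref{high1} essentially verbatim, replacing part (1) of Proposition \ref{geo2} by part (2) and invoking the small cap decoupling theorem for the cone, Theorem \ref{conesmallcapthm}, with flat parameter $\b_2=0$ at the critical exponent $p=2+\frac{2}{\b_1}$. First I would set up the geometric reduction exactly as in Lemma \ref{high1}: let $T$, $Tx=Ax+b$, be the affine transformation from Proposition \ref{geo2}; by \eqref{formg} and the change of variables $x\mapsto A^{*}x$, $\int|g_k^h|^{2+2/\b_1}$ equals a constant multiple of $\int|(\widehat{g_k^h}\circ T^{-1})^{\widecheck{\,\,\,}}|^{2+2/\b_1}$. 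Since $\widehat{g_k^h}$ is supported in $\bigcup_{\g_k}\tilde{\g}_k$ (see \eqref{set} and the discussion of \textsection\ref{blocks}), dyadic pigeonholing in the $\xi_3$-variable yields a dyadic $r$ with $r_{k+1}^{-1}\le r^{-1}\le\underline{C}_\e r_k^{-1}$ (in particular $r\sim r_k$ up to $R^\e$) so that, up to a $(\log R)^{O(1)}$ loss, the integral is dominated by $\int|((\widehat{g_k^h}\circ T^{-1})\eta_r)^{\widecheck{\,\,\,}}|^{2+2/\b_1}$, where $\eta_r$ cuts off to $\{\xi_3\sim r^{-1}\}$; rescaling $x\mapsto rx$ then puts the Fourier support of the rescaled function inside the $(R/r)^{-1}$-neighborhood of $\Gamma$.

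Next I would decouple. By part (2) of Proposition \ref{geo2}, after partitioning the $\g_k$ into $\lesssim_\e R^{2\e}$ families $\mc{S}_i$ and applying the triangle inequality over them, within each family every rescaled piece $r[T(\tilde{\g}_k)\cap\{\xi_3\sim r^{-1}\}]$ lies in a single conical small cap of $\Gamma$ at scale $\sim R/r\sim R/r_k$ with angular parameter $\b_1$ and flat parameter $\b_2=0$. The key point is to apply Theorem \ref{conesmallcapthm} at exactly $p=2+\frac{2}{\b_1}$, the critical exponent for such conical caps: here $\frac{p}{2}-1=\frac{1}{\b_1}$ and $p-2=\frac{2}{\b_1}$, so with $\b_2=0$ the three exponents $(\b_1+\b_2)(\frac{p}{2}-1)$, $(\b_1+\b_2)(p-2)-1$ and $(\b_1+\b_2-\frac12)(p-2)$ are all $\le 1$, and the decoupling constant is $\lesssim(R/r)^{1+\e}$. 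Moreover a conical cap at this scale has angular width $\sim(r/R)^{\b_1}$, and the defining relation between $\b_1$, $r_k$ and $R$ gives $(R/r_k)^{\b_1}=r_k$, so $(r/R)^{\b_1}\sim r_k^{-1}$ up to $R^\e$; since the directions $\w(\g_k)$ are $\sim r_k^{-1}$-separated, each conical cap captures only $\lesssim R^{\e}$ of the $\g_k$.

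Finally I would pass to individual small caps and undo the reductions. For each conical cap $\z$, the triangle inequality and H\"{o}lder bound $\|\sum_{\g_k\in\z}|f_{\g_k}^{k+1}|^2*\w_{\g_k}*\widecheck{\rho}_{>r_{k+1}^{-1}}\|_p^p$ by $R^{O(\e)}\sum_{\g_k\in\z}\||f_{\g_k}^{k+1}|^2*\w_{\g_k}*\widecheck{\rho}_{>r_{k+1}^{-1}}\|_p^p$ (using the cap count from the previous step), after which Young's inequality strips the $L^1$-normalized factor $\w_{\g_k}*\widecheck{\rho}_{>r_{k+1}^{-1}}$ and leaves $\||f_{\g_k}^{k+1}|^2\|_p^p=\|f_{\g_k}^{k+1}\|_{L^{4+4/\b_1}(\R^3)}^{4+4/\b_1}$ (note $2p=4+4/\b_1$). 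Summing over $\z$ and over $i$ collapses the right side to $\sum_{\g_k}\|f_{\g_k}^{k+1}\|_{L^{4+4/\b_1}(\R^3)}^{4+4/\b_1}$; undoing $x\mapsto rx$ cancels the $r^3$ Jacobian generated earlier, undoing $\eta_r$ costs $O(1)$ by Young (as $\widecheck{\eta}_r$ is $L^1$-normalized), and undoing $T$ restores the power of $|\det A|$ cancelled at the start, so collecting the cone decoupling constant with the rescaling factors and absorbing all $\underline{C}_\e$ and $R^{O(\e)}$ losses into $R^{14\e}$ gives the asserted estimate. The only step that is not routine bookkeeping is the decoupling step: confirming via Proposition \ref{geo2}(2) that after the linear change of variables and vertical localization the high-frequency pieces genuinely sit inside conical small caps with flat parameter $\b_2=0$ and the exact angular parameter $\b_1$, and checking that $p=2+\frac{2}{\b_1}$ is precisely the exponent at which Theorem \ref{conesmallcapthm} produces the decoupling constant $\sim R/r$ with no slack; the accompanying observation that a conical cap captures only $\lesssim R^\e$ of the $\g_k$ is what makes the final passage to the clean sum over individual $\g_k$ essentially free.
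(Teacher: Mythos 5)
Your proposal follows essentially the same route as the paper, whose proof of Lemma \ref{high2} is literally ``repeat the argument of Lemma \ref{high1}, using (2) in place of (1) from Proposition \ref{geo2} and applying Theorem \ref{conesmallcapthm} with $\b_1$ and $\b_2=0$,'' followed by the same Young's-inequality step to strip $\w_{\g_k}*\widecheck{\rho}_{>r_{k+1}^{-1}}$; your filled-in details (pigeonholing in $r$, the critical-exponent computation showing all three decoupling exponents are $\le 1$, and the $\lesssim R^\e$ cap-counting that yields a sum over individual $\g_k$) are exactly the implicit content of that argument. Note that your derivation produces the constant $r_k^{-1}R\sim R/r$, which is in fact the factor the paper uses when invoking this lemma in Subcase 1b (the $r_kR$ in the lemma statement appears to be a typo), and it implies the stated bound a fortiori.
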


\begin{proof} Repeat the argument from the proof of Lemma \ref{high1}, using (2) in place of (1) from Proposition \ref{geo2} and applying Theorem \ref{conesmallcapthm} with $\b_1$ as in the hypothesis of the lemma and $\b_2=0$. The result is 
\[ \int|g_k^h|^{2+\frac{2}{\b_1}}\lesssim_\e R^{14\e} \sum_{\g_k}\int |[\widehat{|f_{\g_k}^{k+1}|^2}\widehat{\w_{\g_k}}(1-\rho_{\le r_{k+1}^{-1}})]^{\widecheck{\,\,\,}}|^{2+\frac{2}{\b_1}} .\]
Since $1-\rho_{\le r_{k+1}^{-1}}=\rho_{\le \underline{C}_\e}-\rho_{\le r_{k+1}^{-1}}$ on the support of $\widehat{|f_{\g_k}^{k+1}|^2}$, by Young's convolution inequality, we have 
\[\int |[\widehat{|f_{\g_k}^{k+1}|^2}\widehat{\w_{\g_k}}(1-\rho_{\le r_{k+1}^{-1}})]^{\widecheck{\,\,\,}}|^{2+\frac{2}{\b_1}}\lesssim \int |(\widehat{|f_{\g_k}^{k+1}|^2})^{\widecheck{\,\,\,}}|^{2+\frac{2}{\b_1}}=\int |f_{\g_k}^{k+1}|^{4+\frac{4}{\b_1}} .\]

\end{proof}

\begin{lemma}\label{high3} For each $m$, $1\le m\le N$, 
\[ \int|G_m^h|^6\le C_\e R^\e \big(\sum_{\tau_m}\|F_{\tau_m}^{m+1}\|_{L^{12}(\R^3)}^4\big)^3. \]
\end{lemma}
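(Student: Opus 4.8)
The plan is to run the argument from the proof of Lemma~\ref{high1} essentially verbatim, with two substitutions: Proposition~\ref{geo3} in place of Proposition~\ref{geo2}, and the classical (lossless) $\ell^2L^6$ decoupling theorem for the cone in $\R^3$ (due to Bourgain and Demeter) in place of the small cap cone decoupling Theorem~\ref{conesmallcapthm}. Concretely, writing $H_{\tau_m}:=|F_{\tau_m}^{m+1}|^2*\w_{\tau_m}*\widecheck\rho_{>R_{m+1}^{-1/3}}$ so that $G_m^h=\sum_{\tau_m}H_{\tau_m}$, I would first transport to the standard cone using the (fixed, linear) affine map $T$ from Proposition~\ref{geo3}; since $|\det T|\sim 1$, transporting by $T$ and $T^{-1}$ costs nothing, and as in \eqref{formg} this reduces matters to $\int|(\widehat{G_m^h}\circ T^{-1})^{\widecheck{\,\,\,}}|^6$. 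By \eqref{item3} of Lemma~\ref{pruneprop}, $\widehat{G_m^h}\circ T^{-1}$ is supported in $\bigcup_{\tau_m}T[(\underline C_\e\tau_m-\underline C_\e\tau_m)\setminus{\mc C}_{R_{m+1}^{-1/3}}]$, whose third coordinate has size $\sim[R_{m+1}^{-1/3},\underline C_\e R_m^{-1/3}]$. I would then dyadically pigeonhole among the $O(\log R)$ dyadic values $r^{-1}\in[R_{m+1}^{-1/3},\underline C_\e R_m^{-1/3}]$, insert a smooth cutoff $\eta_r$ to the shell $\{\xi_3\sim r^{-1}\}$, and rescale by $x\mapsto rx$; after rescaling the function is Fourier supported in $\mc{N}_\delta(\Gamma)$ with $\delta\sim\underline C_\e^2 r^2R_m^{-4/3}$ and $\log\delta^{-1}\lesssim\log R$.

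Next I would invoke Proposition~\ref{geo3}: it partitions the $\tau_m$ into $\lesssim_\e R^\e$ subcollections $\mc{S}_i$ such that, within each $\mc{S}_i$, the rescaled Fourier piece $\widetilde H_{\tau_m}$ arising from $H_{\tau_m}$ is supported in a single canonical cone plank of dimensions $\sim 1\times\delta^{1/2}\times\delta$. Crucially, each such plank is hit by only $\lesssim_\e R^\e$ many $\tau_m$: the relevant angles $\w(\tau_m)$ are $\sim R_m^{-1/3}$-separated while a plank has angular width $\sim\underline C_\e rR_m^{-2/3}$, so the count per plank is $\lesssim_\e rR_m^{-1/3}\le (R_{m+1}/R_m)^{1/3}\lesssim R^{\e/3}$. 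I would then apply the triangle inequality over the $\lesssim_\e R^\e$ collections $\mc{S}_i$ and, to each, the classical $\ell^2L^6$ cone decoupling at scale $\delta^{-1}$ (whose loss is $\delta^{-\e}\lesssim R^\e$), writing each rescaled piece as $\sum_\theta g_\theta$ with $g_\theta$ the sum of the $\widetilde H_{\tau_m}$ landing in the plank $\theta$. This produces $\int|G_m^h|^6\lesssim_\e R^\e\,r^3\big(\sum_\theta\|g_\theta\|_{L^6}^2\big)^3$, where $r^3$ is the Jacobian from the rescaling. For each $\theta$, Cauchy--Schwarz over its $\lesssim_\e R^\e$ summands gives $\|g_\theta\|_{L^6}^2\lesssim_\e R^\e\sum_{\tau_m\,:\,\widetilde H_{\tau_m}\subset\theta}\|\widetilde H_{\tau_m}\|_{L^6}^2$, and summing over $\theta$ collapses the bound to $\lesssim_\e R^\e\,r^3\big(\sum_{\tau_m}\|\widetilde H_{\tau_m}\|_{L^6}^2\big)^3$.

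Finally I would undo the transformations and estimate $\|\widetilde H_{\tau_m}\|_{L^6}$. Reversing the rescaling contributes a factor $r^{-3}$ (exactly cancelling the $r^3$ above), removing $\eta_r$ costs nothing by Young's inequality ($\widecheck\eta_r$ is $L^1$-normalized), and transporting back by $T^{-1}$ only changes constants, so $\|\widetilde H_{\tau_m}\|_{L^6}^6\sim r^{-3}\|H_{\tau_m}\|_{L^6}^6$. Exactly as at the end of the proof of Lemma~\ref{high2}, on the Fourier support of $|F_{\tau_m}^{m+1}|^2*\w_{\tau_m}$ one has $1-\rho_{\le R_{m+1}^{-1/3}}=\rho_{\le\underline C_\e}-\rho_{\le R_{m+1}^{-1/3}}$, so by Young's inequality (the kernels $\w_{\tau_m},\widecheck\rho_{\le\underline C_\e},\widecheck\rho_{\le R_{m+1}^{-1/3}}$ being $L^1$-normalized) $\|H_{\tau_m}\|_{L^6}\lesssim\big\||F_{\tau_m}^{m+1}|^2\big\|_{L^6}=\|F_{\tau_m}^{m+1}\|_{L^{12}}^2$. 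Substituting $\|\widetilde H_{\tau_m}\|_{L^6}^2\lesssim r^{-1}\|F_{\tau_m}^{m+1}\|_{L^{12}}^4$ into the previous display makes the $r$-powers cancel, and relabeling $\e$ yields the claimed estimate. I do not expect a genuine obstacle here: this lemma is strictly cheaper than High Lemmas I and II precisely because $\ell^2L^6$ cone decoupling is lossless and, at the scales in play, each cone plank absorbs only $R^{O(\e)}$ moment-curve blocks $\tau_m$ (so the right-hand side can be kept as a sum over individual $\tau_m$); the only points requiring care are the verification of this plank count and the bookkeeping that the Jacobian powers of $r$ cancel, both indicated above.
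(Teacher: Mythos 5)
Your proposal is correct and follows essentially the same route as the paper: repeat the high-lemma-I argument with Proposition \ref{geo3} in place of Proposition \ref{geo2}, apply the lossless $\ell^2L^6$ Bourgain--Demeter cone decoupling instead of the small cap cone theorem, and finish with Young's convolution inequality to reduce to $\|F_{\tau_m}^{m+1}\|_{L^{12}}^4$. Your extra bookkeeping (the $R^{O(\e)}$ count of $\tau_m$ per cone plank and the cancellation of the $r$-Jacobians) is exactly what the paper's phrase ``repeat the argument'' implicitly relies on, so there is no gap.
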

\begin{proof}
Repeat the argument from the proof of Lemma \ref{high1}, using Proposition \ref{geo3} in place of Proposition \ref{geo2} and applying canonical $L^6$ cone decoupling \cite{BD} instead of small cap decoupling. The result is 
\[ \int|G_m^h|^{6}\lesssim_\e R^{8\e} \sum_{\tau_m}\int |[\widehat{|F_{\tau_m}^{m+1}|^2}\widehat{\w_{\tau_m}}(1-\rho_{\le R_{m+1}^{-\frac{1}{3}}})]^{\widecheck{\,\,\,}}|^{6} \]
Since $1-\rho_{\le R_{m+1}^{-\frac{1}{3}}}=\rho_{\le \underline{C}_\e}-\rho_{\le R_{m+1}^{-\frac{1}{3}}}$ on the support of $\widehat{|F_{\tau_m}^{m+1}|^2}$, by Young's convolution inequality, we have 
\[\int |[\widehat{|F_{\tau_m}^{m+1}|^2}\widehat{\w_{\tau_m}}(1-\rho_{\le R_{m+1}^{-\frac{1}{3}}})]^{\widecheck{\,\,\,}}|^{6}\lesssim \int |(\widehat{|F_{\tau_m}^{m+1}|^2})^{\widecheck{\,\,\,}}|^{6}=\int |F_{\tau_m}^{m+1}|^{12} .\]

\end{proof}

\begin{theorem}[Cylindrical Decoupling over $\P^1$] \label{cyldec}
Let $\mb{P}^1=\{(t,t^2):0\le t\le 1\}$ and for $\d>0$, let $\mc{N}_\d(\P^1)$ denote the $\d$-neighborhood of $\mc{P}^1$ in $\R^2$. If $h:\R^3\to\C$ is a Schwartz function with Fourier transform supported in $\mc{N}_\d(\P^1)\times\R$, then for each $4\le p\le 6$,
\[ \int_{\R^3}|h|^p\lesssim_\e d^{-\e}(\sum_{\z}\|h_\z\|_{L^p(\R^3)}^2)^\frac{p}{2}\]
where the $\z$ are products of $\sim \d^{1/2}\times\d$ rectangles that partition $\mc{N}_\d(\P^1)$ with $\R$.
\end{theorem}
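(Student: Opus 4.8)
The plan is to derive this from the sharp $\ell^2$ decoupling theorem for the parabola $\mathbb{P}^1$ in $\mathbb{R}^2$ (Bourgain--Demeter \cite{BD}), with the third variable playing the role of a passive spectator. Recall that the planar theorem asserts: if $g:\mathbb{R}^2\to\mathbb{C}$ is Schwartz with $\widehat g$ supported in $\mathcal N_\delta(\mathbb{P}^1)$, then for $2\le p\le 6$,
\[ \|g\|_{L^p(\mathbb{R}^2)}\lesssim_\e \delta^{-\e}\Big(\sum_{\theta}\|g_\theta\|_{L^p(\mathbb{R}^2)}^2\Big)^{1/2}, \]
where $\theta$ ranges over the $\sim\delta^{1/2}\times\delta$ caps partitioning $\mathcal N_\delta(\mathbb{P}^1)$ and $g_\theta$ denotes the Fourier restriction of $g$ to $\theta$. (Here $d$ and $\delta$ denote the same quantity.)

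First I would slice in the last variable. Since $\widehat h$ is supported in $\mathcal N_\delta(\mathbb{P}^1)\times\mathbb{R}$, for every fixed $x_3$ the function $x'\mapsto h(x',x_3)$ is Schwartz on $\mathbb{R}^2$ and its two-dimensional Fourier transform is supported in $\mathcal N_\delta(\mathbb{P}^1)$; this is a routine consequence of Fubini applied to the Fourier inversion formula. Moreover, writing $\zeta=\theta\times\mathbb{R}$, the two-dimensional Fourier restriction of $h(\cdot,x_3)$ to the cap $\theta$ is exactly $h_\zeta(\cdot,x_3)$. Applying the planar decoupling theorem to $h(\cdot,x_3)$ for each fixed $x_3$ and raising to the $p$-th power gives
\[ \int_{\mathbb{R}^2}|h(x',x_3)|^p\,dx'\lesssim_\e \delta^{-\e}\Big(\sum_{\zeta}\|h_\zeta(\cdot,x_3)\|_{L^p(\mathbb{R}^2)}^2\Big)^{p/2}. \]

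Next I would integrate in $x_3$ and swap the order of the norms. Setting $c_\zeta(x_3):=\|h_\zeta(\cdot,x_3)\|_{L^p(\mathbb{R}^2)}$, the integral of the right-hand side over $x_3$ equals $\delta^{-\e}\big\|\,\|c_\zeta(x_3)\|_{\ell^2_\zeta}\,\big\|_{L^p_{x_3}(\mathbb{R})}^p$; since $p\ge 2$, Minkowski's integral inequality yields
\[ \big\|\,\|c_\zeta\|_{\ell^2_\zeta}\,\big\|_{L^p_{x_3}}\le \big\|\,\|c_\zeta\|_{L^p_{x_3}}\,\big\|_{\ell^2_\zeta}=\Big(\sum_\zeta\Big(\int_{\mathbb{R}}\|h_\zeta(\cdot,x_3)\|_{L^p(\mathbb{R}^2)}^p\,dx_3\Big)^{2/p}\Big)^{1/2}. \]
By Fubini the inner integral is $\|h_\zeta\|_{L^p(\mathbb{R}^3)}^p$, and combining the last three displays (and absorbing the harmless factor $\delta^{-\e p}$ into $\delta^{-\e}$) gives $\int_{\mathbb{R}^3}|h|^p\lesssim_\e\delta^{-\e}\big(\sum_\zeta\|h_\zeta\|_{L^p(\mathbb{R}^3)}^2\big)^{p/2}$, which is the claim.

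The only step requiring genuine care is the justification of the slicing — that for each $x_3$ the function $h(\cdot,x_3)$ is a legitimate input to the planar theorem, with Fourier support contained in $\mathcal N_\delta(\mathbb{P}^1)$ and Fourier restriction to $\theta$ equal to $h_\zeta(\cdot,x_3)$ — together with getting the direction of Minkowski's inequality right, which is exactly where the hypothesis $p\ge 2$ (hence $p\ge 4$, comfortably) is used. The upper restriction $p\le 6$ is inherited verbatim from the sharp range of planar $\ell^2$ decoupling. I do not anticipate any essential difficulty beyond these bookkeeping points.
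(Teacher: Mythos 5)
Your proposal is correct and follows essentially the same route as the paper's proof: fix $x_3$, apply planar $\ell^2$ decoupling for $\P^1$ from \cite{BD} to the slice $h(\cdot,x_3)$, then integrate in $x_3$ and use Fubini together with Minkowski's/triangle inequality in $L^{p/2}_{x_3}$ (valid since $p\ge 2$) to interchange the $L^p_{x_3}$ and $\ell^2_\z$ norms. The bookkeeping points you flag (the slicing justification and the direction of Minkowski) are exactly the content of the paper's argument, so there is nothing missing.
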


\begin{proof} Begin by using Fourier inversion to write
\[ h(x',x_3)=\int_{\mc{N}_\d(\P^1)}\int_{\R}\widehat{h}(\xi',\xi_3)e^{2\pi i \xi\cdot x'}e^{2\pi i \xi_3x_3} d\xi_3d\xi' . \]
For each $x_3$, the function $x'\mapsto \int_{\mc{N}_\d(\P^1)}\int_\R\widehat{h}(\xi',\xi_3)e^{2\pi i \xi_3x_3} d\xi_3e^{2\pi i \xi\cdot x'}d\xi'$ satisfies the hypotheses of the decoupling theorem for $\P^1$. Use Fubini's theorem to apply the $\ell^2$ decoupling theorem for $\P^1$ from \cite{BD} to the inner integral
\[  \int_{\R}\int_{\R^2}|h(x',x_3)|^pdx'dx_3\lesssim_\e\int_{\R}\d^{-\e}\big(\sum_\nu\big(\int_{\R^2}\big|\int_{\nu}\int_{\R}\widehat{h}(\xi',\xi_3)e^{2\pi i \xi\cdot x'}e^{2\pi i \xi_3x_3} d\xi_3d\xi' \big|^pdx'\big)^{\frac{2}{p}}\big)^{\frac{p}{2}}  dx_3\]
where $\{\nu\}$ form a partition of $\mc{N}_\d(\P^1)$ into $\sim\d^{1/2}\times \d$ blocks. By the triangle inequality, the right hand side above (omitting $C_\e \d^{-\e}$) is bounded by 
\[ \big(\sum_\nu\big(\int_{\R}\int_{\R^2}\big|\int_{\nu}\int_{\R}\widehat{h}(\xi',\xi_3)e^{2\pi i \xi\cdot x'}e^{2\pi i \xi_3x_3} d\xi_3d\xi' \big|^pdx'dx_3\big)^{\frac{2}{p}}\big)^{\frac{p}{2}} . \]
The sets $\nu\times\R$ are the $\z$ in the statement of the lemma. 

\end{proof}

\begin{rmk}
The implicit upper bound in the statement of Theorem \ref{cyldec} is uniform in $4\le p\le 6$. For the specific exponent $p=4$, the implicit $C_\e \d^{-\e}$ upper bound may be replaced by an absolute constant $B$ which does not depend on $\d$. 
\end{rmk}

\subsection{Local trilinear restriction for $\mc{M}^3$ \label{trirestpf}}

The weight function $W_{B_r}$ in the following theorem decays by a factor of $10$ off of the ball $B_r$. It is specifically defined in Definition \ref{M3ballweight}. 
\begin{prop}\label{trirestprop}
Let $s\ge 10r\ge10$ and let $f:\R^3\to\C$ be a Schwartz function with Fourier transform supported in $\mc{N}_{r^{-1}}(\mc{M}^3)$. Suppose that $\tau_1^1,\tau_1^2,\tau_1^3$ are canonical moment curve blocks at scale $R_1^{\frac{1}{3}}$ which satisfy $\text{dist}(\tau_1^i,\tau_1^j)\ge s^{-1}$ for $i\not=j$. Then 
\[  \int_{B_r}|f_{\tau_1^1}f_{\tau_1^2}f_{\tau_1^3}|^2\lesssim s^3|B_r|^{-2}\big(\int|f_{\tau_1^1}|^2W_{B_r}\big)\big(\int|f_{\tau_1^2}|^2W_{B_r}\big)\big(\int|f_{\tau_1^3}|^2W_{B_r}\big) . \]
\end{prop}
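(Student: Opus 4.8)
The plan is to prove the trilinear restriction estimate by a standard Córdoba-type $L^2$ argument combined with the transversality of the three blocks. First I would reduce to the case where $f$ is Fourier supported exactly on the union of the three blocks $\tau_1^1,\tau_1^2,\tau_1^3$ (and, by a locally constant / uncertainty heuristic and the weight $W_{B_r}$, replace $B_r$-localization by integration against $W_{B_r}$ throughout). Expanding the product $f_{\tau_1^1}f_{\tau_1^2}f_{\tau_1^3}$ on the Fourier side, its Fourier transform is supported in the sumset $\tau_1^1+\tau_1^2+\tau_1^3$ (thickened by $O(r^{-1})$ in all directions from the $\mc{N}_{r^{-1}}$ neighborhood). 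The key geometric input is that for a point $\xi_1$ in $\tau_1^1$, $\xi_2$ in $\tau_1^2$, $\xi_3$ in $\tau_1^3$ with the $\tau_1^i$ being $s^{-1}$-separated canonical $R_1^{1/3}$-blocks, the three tangent directions $\g'(t_i)=(1,2t_i,3t_i^2)$ are quantitatively linearly independent: the relevant Jacobian (a Vandermonde-type determinant) is $\sim |t_1-t_2||t_1-t_3||t_2-t_3|\gtrsim s^{-3}$. This means the map $(\xi_1,\xi_2,\xi_3)\mapsto \xi_1+\xi_2+\xi_3$ has bounded multiplicity $\lesssim s^3$ as a map from $\tau_1^1\times\tau_1^2\times\tau_1^3$ (with the natural product measure coming from the curve parametrizations) onto its image.

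Concretely I would argue as follows. By Plancherel/Cauchy–Schwarz applied to the trilinear convolution, for $x$ fixed one has a pointwise bound
\[ |f_{\tau_1^1}f_{\tau_1^2}f_{\tau_1^3}(x)|^2 \lesssim \Big(\int |\widehat{f_{\tau_1^1}}* \widehat{f_{\tau_1^2}}* \widehat{f_{\tau_1^3}}|\Big)\cdot \big(\text{stuff}\big), \]
but the cleaner route is the $L^2$ route: write $g=f_{\tau_1^1}f_{\tau_1^2}f_{\tau_1^3}$, so $\widehat{g}$ is supported in the $O(r^{-1})$-thickening of $\tau_1^1+\tau_1^2+\tau_1^3$, and use the locally constant property (Lemma \ref{locconst}) to pass from $\int_{B_r}|g|^2$ to a smoothed-out integral against $W_{B_r}$ — here I use $s\ge 10 r$ so that the reciprocal thickening scale $r^{-1}$ is coarser than the resolution at which the transversality is visible. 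Then Plancherel in the convolution variables gives
\[ \int |g|^2 W_{B_r} \lesssim \int\!\!\int\!\!\int \prod_{i=1}^3 |f_{\tau_1^i}|^2(y_i)\, K(y_1,y_2,y_3)\, dy_1 dy_2 dy_3 \]
for a kernel $K$ that is essentially $W_{B_r}$ evaluated at a linear combination of the $y_i$; the transversality bound $\gtrsim s^{-3}$ on the Jacobian turns the one "wasted" integration into a gain of $s^3 |B_r|^{-2}$ after normalizing the three factors $\int |f_{\tau_1^i}|^2 W_{B_r}$. The factor $|B_r|^{-2}$ is exactly the $L^1$-normalization bookkeeping: two of the three $W_{B_r}$'s are $L^1$-normalized-up-to-$|B_r|$, and $W_{B_r}$ is $L^\infty$-normalized so carries a $|B_r|$ relative to $w_{B_r}$.

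I expect the main obstacle to be making the transversality argument clean at the right scale — i.e. checking that the $s^{-1}$-separation of the $\tau_1^i$ (which live at scale $R_1^{1/3}$, generally much finer than $s$) genuinely yields the Jacobian lower bound $\gtrsim s^{-3}$ uniformly over the blocks, and that the $O(r^{-1})$ Fourier-thickening from $\mc{N}_{r^{-1}}(\mc{M}^3)$ does not destroy the bounded-overlap count (this is where $r\le s/10$ enters, ensuring the thickening is absorbed into the transversal "tube" of width $\sim s^{-1}$). A secondary, purely technical nuisance is keeping track of the weight $W_{B_r}$ through the change of variables and the convolution so that the final constant is genuinely $s^3|B_r|^{-2}$ with the three factors appearing symmetrically; this is routine but must be done carefully. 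An alternative to the $L^2$ computation is to cite/adapt the standard biorthogonality argument for the moment curve (as in \cite{bdg} or the Bourgain–Demeter machinery), but the self-contained $L^2$ argument above is short enough to include directly.
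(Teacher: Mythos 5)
Your proposal follows essentially the same route as the paper's proof: a C\'ordoba-type $L^2$/Plancherel argument in which each $r^{-1}$-thickened block is parametrized as a curve point $\gamma(\xi_1^i)$ plus an $O(r^{-1})$ transverse error, the change of variables $\tilde{\xi}=\gamma(\xi_1^1)+\gamma(\xi_1^2)+\gamma(\xi_1^3)$ has Vandermonde Jacobian $\gtrsim s^{-3}$ (nondegenerate because $\mathrm{dist}(I_i,I_j)\ge s^{-1}-2r^{-1}>0$, which is where $s\ge 10r$ enters), and this yields the factor $s^3$ after Plancherel. The only cosmetic difference is in the bookkeeping for $|B_r|^{-2}$: in the paper it comes from Minkowski plus Cauchy--Schwarz over the three transverse thickening variables, whose domain has measure $\sim r^{-6}=|B_r|^{-2}$, rather than from weight-normalization accounting, but the mechanism is the same.
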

\noindent The weight function $W_{B_r}$ is the generic ball weight defined in Definition \ref{M3ballweight}.

\begin{proof} Let $\g(t)=(t,t^2,t^3)$ and let $B_{r^{-1}}$ be the ball of radius $r^{-1}$ in $\R^3$ centered at the origin. Then 
\begin{align*}
    W_{B_r}(x)f_{\tau_1^i}(x)&= \int_{\tau_1^i+B_{r^{-1}}}\widehat{W}_{B_r}*\widehat{f}_{\tau_1^i}(\xi^i)e^{2\pi i x\cdot \xi^i}d\xi^i\\
    &= \int_{\tau_1^i+B_{r^{-1}}}\widehat{W}_{B_r}*\widehat{f}_{\tau_1^i}(\xi_1^i,\xi_2^i,\xi_3^i)e^{2\pi i x\cdot (\xi_1^i,\xi_2^i,\xi_2^i)}d\xi_1^i\xi_2^i\xi_3^i \\
    &= \int_{|\{\w_i\in\R^2:|\w_i|\le 2r^{-1}\}}\int_{I_i}\widehat{W}_{B_r}*\widehat{f}_{\tau_1^i}(\g(\xi_1^i)+(0,\w_i))e^{2\pi i x\cdot(\g(\xi_1^i)+(0,\w_i))}d\xi_1^id\w^i. 
\end{align*}
where $B_{r^{-1}}+\supp f_{\tau_1^i}\subset\{\g(\xi_1^i)+(0,\w_i):\xi_1^i\in I_1,\quad|\w_i|\le r^{-1}\}$. Let $\{\w_i\in\R^2:|\w_i|\le 2r^{-1}\}=B_{r^{-1}}^{(2)}$. Then for $\w=(\w_1,\w_2,\w_3)$, we have
\begin{align}
\nonumber    \int|W_{B_r}(x)f_{\tau_1^1}(x)W_{B_r}(x)&f_{\tau_1^2}(x)W_{B_r}(x)f_{\tau_1^3}(x)|^2dx\\
\nonumber    &= \int_{B_r}\left|\prod_{i=1}^3\int_{B_{r^{-1}}^{(2)}}\int_{I_i}\widehat{W}_{B_r}*\widehat{f}_{\tau_1^i}(\g(\xi_1^i)+(0,\w_i))e^{2\pi i x\cdot(\g(\xi_1^i)+(0,\w_i))}d\xi_1^id\w_i\right|^2dx \\
\nonumber    &\le \int_{B_r}\left|\int_{(B_{r^{-1}}^{(2)})^3}\left|\prod_{i=1}^3\int_{I_i}\widehat{W}_{B_r}*\widehat{f}_{\tau_1^i}(\g(\xi_1^i)+(0,\w_i))e^{2\pi i x\cdot\g(\xi_1^i)}d\xi_1^i\right|d\w\right|^2dx \\
\label{mainline}    &\le \left(\int_{(B_{r^{-1}}^{(2)})^3}\left(\int_{B_r}\left|\prod_{i=1}^3\int_{I_i}\widehat{W}_{B_r}*\widehat{f}_{\tau_1^i}(\g(\xi_1^i)+(0,\w_i))e^{2\pi i x\cdot\g(\xi_1^i)}d\xi_1^i\right|^2dx\right)^{1/2}d\w \right)^2. 
\end{align}
For each $\w\in (B_{r^{-1}}^{(2)})^3$, analyze the inner integral in $x$. Use the abbreviation $\widehat{W}_{B_r}*\widehat{f}_{\tau_1^i}(\cdot+(0,\w_i))=\widehat{f}_{\tau_1^i}^{\w_i}(\cdot)$ and further manipulate the innermost integral as a function of $x$:
\begin{align*} 
\prod_{i=1}^3\int_{I_i}\widehat{W}_{B_r}*&\widehat{f}_{\tau_i}(\g(\xi_1^i)+(0,\w_i))e^{2\pi i x\cdot\g(\xi_1^i)}d\xi_1^i\\
&=\int_{I_1\times I_2\times I_3}\widehat{f}_{\tau_1^1}^{\w_1}(\g(\xi_1^1))\widehat{f}_{\tau_1^2}^{\w_2}(\g(\xi_1^2))\widehat{f}_{\tau_1^3}^{\w_3}(\g(\xi_1^3))e^{2\pi i x\cdot[\g(\xi_1^i)+\g(\xi_1^2)+\g(\xi_1^3)]}d\xi_1
\end{align*}
where $\xi_1=(\xi_1^1,\xi_1^2,\xi_1^3)$. Perform the change of variables $\tilde{\xi}=\g(\xi_1^1)+\g(\xi_1^2)+\g(\xi_1^3)$. The Jacobian factor is $\frac{1}{|\det J|}$ where $\det J$ is defined explicitly in terms of $\xi_1$ by
\begin{align*}
    \det \begin{bmatrix}
    1&1&1\\
    2\xi_1^1&2\xi_1^2&2\xi_1^3 \\
    3(\xi_1^1)^2&3(\xi_1^2)^2&3(\xi_1^3)^2 
    \end{bmatrix}&= 6(\xi_2-\xi_1)(\xi_3-\xi_1)(\xi_3-\xi_2),
\end{align*}
using the formula for the determinant of a Vandermonde matrix. Note that since $\text{dist}(I_i,I_j)\ge s^{-1}-2r^{-1}>0$, $|\det J|$ is nonzero. The change of variables yields 
\begin{align}\label{inthere}
    \int_{\g(I_1)+\g(I_2)+\g(I_3)} \widehat{f}_{\tau_1^1}^{\w_1}(\g(\xi_1^1))\widehat{f}_{\tau_1^2}^{\w_2}(\g(\xi_1^2))\widehat{f}_{\tau_1^3}^{\w_3}(\g(\xi_1^3))e^{2\pi i x\cdot\tilde{\xi}}\frac{1}{|\det J(\xi_1)|}d\tilde{\xi}
\end{align}
where we interpret $\xi_1$ in the integrand as implicitly depending on $\tilde{\xi}$.
Define $F^\w(\tilde{\xi})$ by 
\[ \chi_{\g(I_1)+\g(I_2)+\g(I_3)}(\tilde{\xi})\widehat{f}_{\tau_1^1}^{\w_1}(\g(\xi_1^1))\widehat{f}_{\tau_1^2}^{\w_2}(\g(\xi_1^2))\widehat{f}_{\tau_1^3}^{\w_3}(\g(\xi_1^3))\frac{1}{|\det J(\xi_1)|} \]
so that we may view the integral in \eqref{inthere} as the inverse Fourier transform of $F^\w$. The summary of the inequality so far, picking up from \eqref{mainline}  and using the change of variables and the definition of $F^\w$, is
\begin{align*}
\int_{B_r}|f_{\tau_1^1}(x)f_{\tau_1^2}(x)f_{\tau_1^3}(x)|^2dx&\lesssim \left(\int_{(B_{r^{-1}}^{(2)})^3}\left(\int\left|\widecheck{F}^\w(x)\right|^2dx\right)^{1/2}d\w \right)^2.
\end{align*}
By Plancherel's theorem, the right hand side above equals
\[\left(\int_{(B_{r^{-1}}^{(2)})^3}\left(\int\left|{F}^\w(\tilde{\xi})\right|^2d\tilde{\xi}\right)^{1/2}d\w \right)^2. \]
By Cauchy-Schwarz, this is bounded above by 
\[ |(B_{r^{-1}}^{(2)})^3|\int_{(B_{r^{-1}}^{(2)})^3}\int\left|{F}^\w(\tilde{\xi})\right|^2d\tilde{\xi}d\w\sim r^{-6}\int_{(B_{r^{-1}}^{(2)})^3}\int\left|{F}^\w(\tilde{\xi})\right|^2d\tilde{\xi}d\w. \]
Undo the change of variables, again writing $\tilde{\xi}=\g(\xi_1^1)+\g(\xi_1^2)+\g(\xi_1^3)$ 
to get
\[ r^{-6}\int_{(B_{r^{-1}}^{(2)})^3}\int_{I_1\times I_2\times I_3}\left|\widehat{f}_{\tau_1^1}^{\w_1}(\g(\xi_1^1))\widehat{f}_{\tau_1^2}^{\w_2}(\g(\xi_1^2))\widehat{f}_{\tau_1^3}^{\w_3}(\g(\xi_1^3))\right|^2|\det J(\xi_1)|^{-1}d{\xi}_1d\w  . \]
Note that $|\det J(\xi_1)|\gtrsim s^{-3}$, so the previous line is bounded by 
\[ r^{-6}s^3\int_{(B_{r^{-1}}^{(2)})^3}\int\left|\widehat{f}_{\tau_1^1}^{\w_1}(\g(\xi_1^1))\widehat{f}_{\tau_1^2}^{\w_2}(\g(\xi_1^2))\widehat{f}_{\tau_1^3}^{\w_3}(\g(\xi_1^3))\right|^2d{\xi}_1d\w \sim r^{-6}s^3\prod_{i=1}^3\int_{\mc{N}_{r^{-1}}(\tau_i)}\left|\widehat{W}_{B_r}*\widehat{f}_{\tau_1^i}(\xi)\right|^2d{\xi} . \]
By Plancherel's Theorem, this is bounded by
\[  r^{-6}s^3\prod_{i=1}^3\int_{\R^3}\left|f_{\tau_1^i}(x)\right|^2 W_{B_r}dx . \]

\end{proof}

\section{A weak version of Theorem \ref{main} for the critical exponent \label{alphapc}}

\subsection{The broad part of $U_\a$ \label{broad}}

For three canonical blocks $\tau_1^1,\tau_1^2,\tau_1^3$ (with dimensions $\sim R_1^{-\frac{1}{3}}\times R_1^{-\frac{2}{3}}\times R_1^{-1}$) which are pairwise $\ge 10\underline{C}_\e R^{-\frac{\e}{3}}$-separated, where $\underline{C}_\e$ is from Lemma \ref{pruneprop}, define the broad part of $U_\a$ to be
\[ \text{Br}_\a^K=\{x\in U_\a: \a\le K|f_{\tau_1^1}(x)f_{\tau_1^2}(x)f_{\tau_1^3}(x)|^{\frac{1}{3}},\quad\max_{\tau_1^i}|f_{\tau_1^i}(x)|\le \a\}. \]

We bound the broad part of $U_\a$ in the following proposition. 

\begin{prop}\label{mainprop} Let $R,K\ge1$. Suppose that $\|f_\g\|_{L^\infty(\R^3)}\le 2$ for all $\g$. Then \begin{equation*} 
\a^{6+\frac{2}{\b}}|\text{\emph{Br}}_\a^{K}|\lesssim_\e  K^{50}R^{10\e}A_\e^{10(M+N)} R^{2\b+1}\sum_{\g}\|f_\g\|_{L^2(\R^3)}^2 .
\end{equation*} 
\end{prop}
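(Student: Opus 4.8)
The plan is to run the high-low argument on the nested scales $r_k$ (small caps) and $R_k$ (canonical caps) exactly as in \cite{gmw}, with the cone small-cap decoupling of Theorem \ref{conesmallcapthm} inserted at the high-frequency steps. First I would split $\mathrm{Br}_\a^K$ using the partition $B_{R^{\max(2\b,1)}}=H\cup(\cup_k\Lambda_k)\cup(\cup_k\Omega_k)\cup L$ from Definition \ref{impsets}, so it suffices to bound $\a^{6+\frac2\b}|\mathrm{Br}_\a^K\cap X|$ for $X$ each of these $\sim\e^{-1}$ many pieces; the final bound then loses only an $\e^{-1}$ factor (absorbed into $R^\e$) plus the $A_\e^{O(M+N)}$ and $K^{O(1)}$ factors. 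On each piece, the pruning lemma (Lemma \ref{ftofk}) lets me replace $f$ by its pruned version $f^{k+1}$ (resp.\ $F^{k+1}$) at the relevant scale at the cost of a factor $\tfrac12$ in $\a$, since the pruned-off part is $\le \a/(A_\e^{1/2}K^3)$ pointwise; this is where the broadness hypothesis $\max_{\tau_1^i}|f_{\tau_1^i}|\le\a$ and the uniform amplitude normalization \eqref{unihyp} get used to control the number of surviving wave packets.

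On the high set $H$ and the $\Lambda_k$, the strategy is: (i) apply the local trilinear restriction estimate Proposition \ref{trirestprop} at scale $r=r_k$ (or $R^\b$ on $H$) to bound $\a^6|\mathrm{Br}_\a^K\cap X|$ by an integral of $(\sum_{\g_k}|f_{\g_k}^{k+1}|^2*\w_{\g_k})^3=g_k^3$ over a neighborhood of $X$, picking up the $s^3=R^{O(\e)}$ and the $K^{O(1)}$ factors; (ii) use Corollary \ref{highdom}, high-dominance on $\Lambda_k$, to replace $g_k$ by $|g_k^h|$; (iii) use the locally-constant property (Lemma \ref{locconst}) to upgrade the exponent from $3$ to the critical exponent $p_c/2=3+1/\b$ (or whatever the critical exponent of the relevant cone small-cap scale is), gaining powers of the low-scale $r_k$ in the denominator — this is the key observation flagged in \textsection\ref{highsec}; (iv) apply High Lemma I or II (Lemma \ref{high1}, Lemma \ref{high2}) to decouple $\int|g_k^h|^{2+2/\b_1}$ into $\sum_{\g_k}\|f_{\g_k}^{k+1}\|$ terms; (v) use Lemma \ref{pruneprop}(2) to convert the high $L^\infty$-power into $\|f_{\g_k}^{k+1}\|_2^2$ with the factor $(R^\b/\a)^{\text{power}}$, and sum over $\g_k\subset\g_{k-1}$ collapsing the scales back down, ending with $\sum_\g\|f_\g\|_2^2$ and the claimed powers of $R$, $\a$, $A_\e$, $K$. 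On the $\Omega_k$ one runs the same scheme with $G_k$, $F_{\tau_k}$, Proposition \ref{geo3} and Lemma \ref{high3} (canonical $L^6$ cone decoupling) in place of the small-cap versions. On the low set $L$, $g$ and $G$ are low-dominated at every scale, so iterating Lemma \ref{low} down to scale $R^{1/3}$ reduces everything to canonical $\ell^2 L^{p_c}$ decoupling for $\mc{M}^3$ at the finest scale (which we recover, or just cite \cite{bdg}), plus $L^2$-orthogonality (Lemma \ref{L2orth}).

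The main obstacle I expect is the bookkeeping in step (iii)-(iv): making sure the cone small-cap scale one feeds into Theorem \ref{conesmallcapthm} (governed by $\b_1$ with $(r_kR)^{-\b_1}=r_k$) lines up with the critical exponent $2+2/\b_1$ so that \emph{all three} terms $R^{(\b_1+\b_2)(p/2-1)}$, $R^{(\b_1+\b_2)(p-2)-1}$, $R^{(\b_1+\b_2-1/2)(p-2)}$ in the cone estimate are dominated by the target $R^{2\b+1}$ power after the powers of $r_k$ gained from the locally-constant step and the powers of $R^\b/\a$ from pruning are accounted for — and that this works uniformly across the whole range $\b\in[\tfrac12,1]$ (the $\b=1$ endpoint needs the cylinder-vs-ball fix mentioned in \textsection\ref{highsec}, handled via Theorem \ref{cyldec}). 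A secondary nuisance is tracking the constants: each of the $O(\e^{-1})$ scales contributes a bounded multiplicative loss, but the pruning thresholds stack multiplicatively as $A_\e^{M-k+1}$, so one must check the total is $A_\e^{O(M+N)}=A_\e^{O(\e^{-1})}$ as claimed, and similarly that the trilinear step and broadness contribute only $K^{O(1)}$.
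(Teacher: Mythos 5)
Your plan follows the paper's architecture: decompose $\mathrm{Br}_\a^K$ along the sets $\Lambda_k$, $\Omega_m$, $L$ of Definition \ref{impsets}, prune via Lemma \ref{ftofk}, run trilinear restriction (Proposition \ref{trirestprop}) with local $L^2$ orthogonality and the locally constant property, use high-dominance (Corollary \ref{highdom}) and the High Lemmas \ref{high1}, \ref{high2}, \ref{high3}, then unwind the pruning down to $\sum_\g\|f_\g\|_2^2$. In that sense it is the same route as the paper.

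But as a proof it has gaps, and a few mechanisms are misdescribed. First, the exponent is not raised by the locally constant property, and the gain is not powers of $r_k$: on $\Lambda_k$ one divides by powers of the threshold $A_\e^{M-k}R^\b$ from the definition of $\Lambda_k$, and the locally constant property only converts the resulting pointwise bound on $|g_k^h|^{3+p}$ into an integral over $\R^3$. Second, the part you defer as ``the main obstacle'' is in fact the bulk of the paper's proof: the preliminary reduction to $R^\b\le\a^2$ (without which Case 2 and part of Case 1 do not close), the choice $p=1$ with Lemma \ref{high1} when $R^{-\b}\le r_k^{-1}\le R^{-\frac12}$ versus $p=\frac{2}{\b_1}-1$ with Lemma \ref{high2} otherwise, the interpolation between the two $L^\infty$ bounds $\min(r_k^{-1}R^\b,\,K^3A_\e^{M-k}\frac{R^\b}{\a})$ from Lemma \ref{pruneprop}, and the unwinding of the pruning via cylindrical decoupling (Theorem \ref{cyldec}) — none of this is carried out in your proposal. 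Third, your treatment of $L$ is not the paper's and is not clearly viable as stated: the paper never invokes canonical decoupling from \cite{bdg} there; it bounds $\a^{6+\frac2\b}|\mathrm{Br}_\a^K\cap L|\lesssim K^{12}\int_{U_\a\cap L}|f|^2|F_1|^{4+\frac2\b}$, inserts $G_1$ by the locally constant property, uses $G_1\lesssim A_\e^{M+N}R^\b$ directly from the definition of $L$, and finishes with $L^2$ orthogonality; a reduction to $\ell^2 L^{p_c}$ decoupling over canonical caps $\theta$ would still leave the passage from $\theta$ to the small caps $\g$ unaddressed. Finally, Theorem \ref{cyldec} is used to unwind the pruning, not to fix the cylinder-versus-ball issue at $\b=1$; that issue is handled by taking the low-frequency cutoff $\rho_{\le s^{-1}}$ to be a cylinder in the $(\xi_1,\xi_2)$-variables.
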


\begin{proof}[Proof of Proposition \ref{mainprop}]
Begin by observing that we may assume that $R^\b\le \a^2$. Indeed, if $\a^2\le R^\b$, then we have 
\[ \a^{6+\frac{2}{\b}}|U_\a|\le R^{2\b+1}\|f\|_{L^2(\R^3)}^2\le R^{2\b+1}\sum_\g\|f_\g\|_2^2 \]
using $L^2$-orthogonality. Assume for the remainder of the argument that $R^\b\le\a^2$.


We bound each of the sets $\text{Br}_\a^K\cap\Lambda_k$, $\text{Br}_\a^K\cap\Omega_m$, and $\text{Br}_\a^K\cap L$ in separate cases. It suffices to consider the case that $R$ is at least some constant depending on $\e$ since if $R\le C_\e$, we may prove the proposition using trivial inequalities. 

\vspace{2mm}
\noindent\underline{Case 1: bounding $| \text{Br}_\a^{K}\cap\Lambda_k|$. } 
By Lemma \ref{ftofk}, 
\[ |\text{Br}_\a^K\cap\Lambda_k|\le|\{x\in  U_\a\cap\Lambda_k:\a\lesssim K|f_{\tau_1^1}^{k+1}(x)f_{\tau_1^2}^{k+1}(x)f_{\tau_1^3}^{k+1}(x)|^{\frac{1}{3}},\quad\max_{\tau_1^i}|f_{\tau_1^i}(x)|\le \a\}.\]
By Lemma \ref{pruneprop}, the Fourier supports of $f_{\tau_1^1}^{k+1},f_{\tau_1^2}^{k+1},f_{\tau_1^3}^{k+1}$ are contained in the $\underline{C}_\e r_k^{-1}$-neighborhood of $\underline{C_\e} \tau_1^1,\underline{C}_\e \tau_1^2,\underline{C}_\e\tau_1^3$ respectively, which are $\ge\underline{C}_\e R^{-\frac{\e}{3}}$-separated blocks of the moment curve. Let $\{B_{r_k}\}$ be a finitely overlapping cover of $\text{Br}_\a^{K}\cap\Lambda_k$ by $r_k$-balls. For $R$ large enough depending on $\e$, apply Proposition \ref{trirestprop} to get
\begin{align*}
    \int_{B_{r_k}}|f_{\tau_1^1}^{k+1}f_{\tau_1^2}^{k+1}f_{\tau_1^3}^{k+1}|^2&\lesssim_\e R^{\e} |B_{r_k}|^{-2}\Big(\int|f_{\tau_1^1}^{k+1}|^2W_{B_{r_k}}\Big)\Big(\int|f_{\tau_1^2}^{k+1}|^2W_{B_{r_k}}\Big)\Big(\int|f_{\tau_1^3}^{k+1}|^2W_{B_{r_k}}\Big).
\end{align*}
Using local $L^2$-orthogonality (Lemma \ref{L2orth}), each integral on the right hand side above is bounded by 
\[ C_\e \int\sum_{\tau_k}|f_{\g_k}^{k+1}|^2W_{B_{r_k}}. \]
If $x\in \text{Br}_\a^{K}\cap\Lambda_k\cap B_{r_k}$, then the above integral is bounded by 
\[ C_\e \int \sum_{\g_k}|f_{\g_k}^{k+1}|^2*\w_{\g_k}W_{B_{r_k}}\lesssim C_\e |B_{r_k}| \sum_{\g_k}|f_{\g_k}^{k+1}|^2*\w_{\g_k}(x) \]
by the locally constant property (Lemma \ref{locconst}) and properties of the weight functions. The summary of the inequalities so far is that 
\[ \a^6|\text{Br}_\a^{K}\cap\Lambda_k\cap B_{r_k}|\lesssim_\e K^6\int_{B_{r_k}}|f_{\tau_1^1}^{k+1}f_{\tau_1^2}^{k+1}f_{\tau_1^3}^{k+1}|^2\lesssim_\e R^\e K^6 |B_{r_k}|g_k(x)^3 \]
where $x\in \text{Br}_\a^{K}\cap\Lambda_k\cap B_{r_k}$. 

Recall that since $x\in\Lambda_k$, we have the lower bound $A_\e^{M-k}R^\b\le g_k(x)$ (where $A_\e$ is from Definition \ref{impsets}), which leads to the inequality
\[ \a^6|\text{Br}_\a^{K}\cap\Lambda_k\cap B_{r_k}|\lesssim_\e K^6 R^{\e} \frac{1}{(A_\e^{M-k}R^\b)^p}|B_{r_k}|g_k(x)^{3+p} \]
for any $p\ge 0$. By Corollary \ref{highdom}, we also have the upper bound $|g_k(x)|\le 2|g_k^h(x)|$, so that 
\[ \a^6|\text{Br}_\a^{K}\cap\Lambda_k\cap B_{r_k}|\lesssim_\e K^6 R^{\e} \frac{1}{(A_\e^{M-k}R^\b)^p}|B_{r_k}||g_k^h(x)|^{3+p} \]
for any $p\ge 0$. By the locally constant property applied to $g_k^h$, $|g_k^h|^{3+p}\lesssim_\e |g_k^h*w_{ B_{r_k}}|^{3+p}$ and by Cauchy-Schwarz, $|g_k^h*w_{ B_{r_k}}|^{3+p}\lesssim |g_k^h|^{3+p}*w_{B_{r_k}}$. Combine this with the previous displayed inequality to get 
\[ \a^6|\text{Br}_\a^{K}\cap\Lambda_k\cap B_{r_k}|\lesssim_\e K^6 R^{\e} \frac{1}{(A_\e^{M-k}R^\b)^p}\int|g_k^h|^{3+p}W_{ B_{r_k}} .\]
Summing over the balls $B_{r_k}$ in our finitely-overlapping cover of $\text{Br}_\a^{K}\cap\Lambda_k$, we conclude that
\begin{equation}\label{peqn} \a^6|\text{Br}_\a^{K}\cap\Lambda_k|\lesssim_\e K^6 R^{\e} \frac{1}{(A_\e^{M-k} R^\b)^{p}}\int_{\R^3}|g_k^h|^{3+p} .\end{equation}
We are done using the properties of the set $\text{Br}_\a^{K}\cap\Lambda_k$, which is why we now integrate over all of $\R^3$ on the right hand side. We will choose different $p>0$ and analyze the high part $g_k^h$ in two sub-cases which depend on the size of $r_k$.

\vspace{2mm}
\noindent\underline{Subcase 1a: $R^{-\b}\le r_k^{-1}\le R^{-\frac{1}{2}}$.}
This case only appears if $\frac{1}{2}\le \b$. Choose $p=1$ in \eqref{peqn} and use Lemma \ref{high1} to obtain
\[ \a^6|\text{Br}_\a^{K}\cap\Lambda_k|\lesssim_\e K^6 R^{\e}   \frac{1}{A_\e^{M-k}R^\b} C_\e R^\e r_k^{-1}R\sum_{\z}\|\sum_{\g_k\subset\z}|f_{\g_k}^{k+1}|^2*\w_{\g_k}*\widecheck{\rho}_{>r_{k+1}^{-1}}\|_{L^4(\R^3)}^4 \]
where $\z$ are collections of $r_k^2R^{-1}$ many adjacent $\g_k$. 

The Fourier supports of the terms in the $L^4$ norm are still approximately disjoint (actually $C_\e$-overlapping, see Proposition \ref{geo1}), so by Plancherel's theorem and $L^2$-orthogonality, we have
\begin{align}  
\|\sum_{\g_k\subset\z}|f_{\g_k}^{k+1}|^2*&\w_{\g_k}*\widecheck{\rho}_{>r_{k+1}^{-1}}\|_{L^4(\R^3)}^4 \nonumber\\
&\lesssim_\e R^\e \|\sum_{\g_k\subset\z}|f_{\g_k}^{k+1}|^2*\w_{\g_k}*\widecheck{\rho}_{>r_{k+1}^{-1}}\|_{L^\infty(\R^3)}^2\sum_{\g_k\subset\z}\||f_{\g_k}^{k+1}|^2*\w_{\g_k}*\widecheck{\rho}_{>r_{k+1}^{-1}}\|_{L^2(\R^3)}^2 \label{upp}
\end{align}
for each $\z$. First bound the $L^\infty$ norm by
\[ \|\sum_{\g_k\subset\z}|f_{\g_k}^{k+1}|^2*\w_{\g_k}*\widecheck{\rho}_{>r_{k+1}^{-1}}\|_{L^\infty(\R^3)}^2\lesssim (\#\g_k\subset\z)^2\max_{\g_k}\|f_{\g_k}^{k+1}\|_{L^\infty(\R^3)}^4\lesssim (r_k^2R^{-1})^2\max_{\g_k}\|f_{\g_k}^{k+1}\|_{L^\infty(\R^3)}^4\]
where we used that $\|\w_k*\widecheck{\rho}_{>r_{k+1}^{-1}}\|_1\sim 1$. 
To bound each of the $L^2$ norms in \eqref{upp}, we use cylindrical $L^4$-decoupling the parabola (Theorem \ref{cyldec}) and unravel the pruning process using properties from Lemma \ref{pruneprop}:
\begin{align*}
(\text{Young's inequality})\qquad\qquad  \quad \||f_{\g_k}^{k+1}|^2*&\w_{\g_k}*\widecheck{\rho}_{>r_{k+1}^{-1}}\|_{L^2(\R^3)}^2\lesssim \|f_{\g_k}^{k+1}\|_{L^4(\R^3)}^4\\
(\text{$L^4$ cyl. dec. for $\P^1$})\quad\qquad\qquad\qquad\qquad &\lesssim_\e R^{\e^2} \big(\sum_{\g_{k+1}\subset\g_k}\|f_{\g_{k+1}}^{k+1}\|_{L^4(\R^3)}^2\big)^2\\
(\text{\eqref{item1} from Lemma \ref{pruneprop}})\qquad\qquad\qquad\qquad\quad &\lesssim \big(\sum_{\g_{k+1}\subset\g_k}\|f_{\g_{k+1}}^{k+2}\|_{L^4(\R^3)}^2\big)^2\\
(\text{iterate previous two inequalities})\quad\qquad\qquad &\lesssim \cdots\lesssim  \big(\sum_{\g_N\subset\g_k}\|f_{\g_{N}}^{N}\|_{L^4(\R^3)}^2\big)^2\lesssim \big(\sum_{\g\subset\g_k}\|f_{\g}\|_{L^4(\R^3)}^2\big)^2.
\end{align*} 
Note that each application of $L^4$-decoupling involved an explicit constant $B$ in the upper bound, so does not depend on a scale $R$. The accumulated constant in the unwinding-the-pruning process above is $B^{C\e^{-1}}$ since there are fewer than $\sim \e^{-1}$ many different scales of $\g_k$ until we arrive at $\g$. Use Cauchy-Schwarz to bound the expression in the final upper bound above by 
\[ \#\g\subset\g_k\sum_{\g\subset\g_k}\|f_{\g}\|_{L^4(\R^3)}^4\lesssim (r_k^{-1}R^\b)\sum_{\g\subset\g_k}\|f_{\g}\|_{L^4(\R^3)}^4. \]
Using the assumption $\|f_\g\|_\infty\lesssim 1$ for each $\g$, $\|f_\g\|_{L^4(\R^4)}^4\lesssim \|f_{\g}\|_{L^2(\R^3)}^2$. The summary of the argument in this case so far is that
\begin{align*} 
\a^6|\text{Br}_\a^{K}\cap\Lambda_k|&\lesssim_\e K^6 R^{2\e} R^{-\b} r_k^{-1}R\sum_{\z}(r_k^2R^{-1})^2\max_{\g_k}\|f_{\g_k}^{k+1}\|_{\infty}^4(r_k^{-1}R^\b)\sum_{\g_k\subset\z}\|f_{\g}\|_2^2 \\
    &\lesssim_\e K^6 R^{2\e} r_k^{2}R^{-1}\max_{\g_k}\|f_{\g_k}^{k+1}\|_{\infty}^4\sum_{\g}\|f_{\g}\|_2^2 .
\end{align*}

It now suffices to verify that $r_k^2R^{-1}\max_{\g_k}\|f_{\g_k}^{k+1}\|_\infty^4\lessapprox R^{2\b+1}\a^{-\frac{2}{\b}}$. We will use the upper bounds $\|f_{\g_k}^{k+1}\|_\infty\lesssim \min(r_k^{-1}R^\b,K^3A_\e^{M-k}\frac{R^\b}{\a})$ (from \eqref{item1} and \eqref{item2} in Lemma \ref{pruneprop}). Suppose that $r_k<\a$. Use $\|f_{\g_k}^{k+1}\|_\infty\lesssim K^3 A_\e^{M-k}\frac{R^\b}{\a}$ and $\b\ge \frac{1}{2}$ to check:
\begin{align*}
(r_k)^{\frac{2}{\b}-2}{\le}(R^\b)^{\frac{2}{\b}-2}\qquad&\implies\qquad 
r_k^2 R^{-1+4\b}{\le}  R^{2\b+1}(r_k^{-1})^{4-\frac{2}{\b}}\\ 
&\implies \qquad r_k^2R^{-1}\big(\frac{R^{\b}}{\a}\big)^4{\le} R^{2\b+1} \a^{-\frac{2}{\b}} \\
&\implies \qquad r_k^2R^{-1}\max_{\g_k}\|f_{\g_k}^{k+1}\|_\infty^4{\lesssim} A_\e^{4(M-k)}R^{2\b+1} \a^{-\frac{2}{\b}} ,
\end{align*}
as desired. Now suppose that $r_k\ge \a$. Then use $\|f_{\g_k}^{k+1}\|_\infty\lesssim r_k^{-1}R^\b$ and check:
\begin{align*}
(r_k)^{\frac{2}{\b}-2}\le (R^\b)^{\frac{2}{\b}-2}\qquad&\implies\qquad r_k^2R^{-1}(r_k^{-1}R^\b)^4\le R^{2\b+1}(r_k)^{-\frac{2}{\b}}\\
&\implies \qquad r_k^2R^{-1}\max_{\g_k}\|f_{\g_k}^{k+1}\|_\infty^4\lesssim R^{2\b+1}(\a)^{-\frac{2}{\b}}, 
\end{align*}
which finishes this subcase.

\vspace{2mm}
\noindent\underline{Subcase 1b: $\max(R^{-\b},R^{-\frac{1}{2}})\le r_k^{-1}\le R^{-\frac{1}{3}}$. } In this case, let $\b_1\in[\frac{1}{2},1]$ satisfy $(r_k^{-1}R)^{-\b_1}=r_k^{-1}$ and take $p=\frac{2}{\b_1}-1$ in \eqref{peqn}. Then by Lemma \ref{high2}
\[\a^6|\text{Br}_\a^{K}\cap\Lambda_k|\lesssim_\e K^6 R^{\e} \frac{1}{R^{\b(\frac{2}{\b_1}-1)}} C_\e R^\e r_k^{-1}R\sum_{\g_k}\|f_{\g_k}^{k+1}\|_{L^{4+\frac{4}{\b_1}}(\R^3)}^{4+\frac{4}{\b_1}}. \]
Majorize each $L^{4+\frac{4}{\b_1}}$ norm by a combination of $L^\infty $ and $L^6$ norms to get
\[\a^6|\text{Br}_\a^{K}\cap\Lambda_k|\lesssim_\e K^6 R^{2\e}\frac{1}{R^{\b(\frac{2}{\b_1}-1)}}  r_k^{-1}R\sum_{\g_k}\max_{\g_k}\|f_{\g_k}^{k+1}\|_\infty^{\frac{4}{\b_1}-2}\|f_{\g_k}^{k+1}\|_{L^{6}(\R^3)}^{6}. \]

Repeat the ``unwinding the pruning" argument from Subcase 1a to obtain 
\[\|f_{\g_k}^{k+1}\|_{L^{6}(\R^3)}^{6} \lesssim B_{\e^5}R^{\e^4}\big(\sum_{\g\subset\g_k}\|f_{\g}\|_{L^{6}(\R^3)}^{2}\big)^3\lesssim B_{\e^5}R^{\e^4}(r_k^{-1}R^\b)^2\sum_{\g\subset\g_k}\|f_{\g}\|_{L^{2}(\R^3)}^{2}  \]
where we used Cauchy-Schwarz and the assumption $\|f_\g\|_\infty\lesssim 1$ in the final inequality. Note that we have the additional constant $B_{\e^5}^{\e^{-1}}R^{\e^4}$ due the accumulation of $\le \e^{-1}$ many factors of the upper bound $B_{\e^5}R^{\e^5}$ for $L^6$ decoupling of the parabola with small parameter $\e^5$. In summary, \[\a^6|\text{Br}_\a^{K}\cap\Lambda_k|\lesssim_\e K^6 R^{3\e} \frac{1}{R^{\b(\frac{2}{\b_1}-1)}}  r_k^{-1}R\sum_{\g_k}\max_{\g_k}\|f_{\g_k}^{k+1}\|_\infty^{\frac{4}{\b_1}-2}(r_k^{-1}R^\b)^2\sum_{\g\subset\g_k}\|f_{\g}\|_{L^{2}(\R^3)}^{2}. \]

It suffices to check that $\frac{1}{R^{\b(\frac{2}{\b_1}-1)}}r_k^{-1}R\max_{\g_k}\|f_{\g_k}^{k+1}\|_\infty^{\frac{4}{\b_1}-1}(r_k^{-1}R^\b)^2\lessapprox R^{2\b+1}\a^{-\frac{2}{\b}}$, which simplifies to $R^{\b(1-\frac{2}{\b_1})}r_k^{-3}\max_{\g_k}\|f_{\g_k}\|_\infty^{\frac{4}{\b_1}-2}\lessapprox \a^{-\frac{2}{\b}}$.
Using $\|f_{\g_k}^{k+1}\|_\infty\le K^3A_\e^{(M-k)}\frac{R^\b}{\a}$, it further suffices to verify the inequality $r_k^{-3}R^{\b(\frac{2}{\b_1}-1)}\lessapprox \a^{\frac{4}{\b_1}-2-\frac{2}{\b}}$.

Suppose that the exponent $\frac{4}{\b_1}-2-\frac{2}{\b}\ge 0$. Use $r_k^{-1}\le R^{-1/3}$ and $R^\b\le \a^2$ to verify   
\begin{align*}
(R^\b)^{\frac{2}{\b_1}-1-\frac{1}{\b}}\le (\a^2)^{\frac{2}{\b_1}-1-\frac{1}{\b}}\qquad &\implies\qquad (R^{-1})R^{\b(\frac{2}{\b_1}-1)}\le \a^{\frac{4}{\b_1}-2-\frac{2}{\b}}. 
\end{align*}
Now suppose that the exponent $\frac{4}{\b_1}-2-\frac{2}{\b}<0$. Using Cauchy-Schwarz, the locally constant property, and the definition of $\Lambda_k$, for $x\in U_\a\cap\Lambda_k$, we have $\a^2\lesssim \#\g_{k+1}\sum_{\g_k}|f_{\g_{k+1}}^{k+2}|^2\lesssim R^\e r_kg_{k+1}(x)\lesssim R^\e r_kA_\e^{(M-k-1)}R^\b$. Also use $r_k^{1/\b_1}=r_k^{-1}R$ to verify
\begin{align*}
R^{-1}\le r_k^{-\frac{1}{\b}}\qquad&\implies\qquad r_k^{-3}R\le (r_k^{-1}R)^2r_k^{-1-\frac{1}{\b}}\\
&\implies \qquad r_k^{-3}R\le r_k^{\frac{2}{\b_1}-1-\frac{1}{\b}}\\
&\implies \qquad r_k^{-3}R(R^\e A_\e^{(M-k-1)} R^\b)^{\frac{2}{\b_1}-1-\frac{1}{\b}}\le (\a^2)^{\frac{2}{\b_1}-1-\frac{1}{\b}}\\
&\implies \qquad r_k^{-3}R^{\b(\frac{2}{\b_1}-1)}\le (R^\e A_\e^{(M-k-1)} R^\b)^{8}\a^{\frac{4}{\b_1}-2-\frac{2}{\b}}, 
\end{align*}
as desired.

\vspace{2mm}
\noindent\underline{Case 2: bounding $|\text{Br}_\a^{K}\cap\Omega_m|$.} Repeat the reasoning at the beginning of Case 1. By Lemma \ref{ftofk}, 
\[ |\text{Br}_\a^K\cap\Omega_m|\le|\{x\in  U_\a\cap\Omega_m:\a\lesssim K|F_{\tau_1^1}^{m+1}(x)F_{\tau_1^2}^{m+1}(x)F_{\tau_1^3}^{m+1}(x)|^{\frac{1}{3}},\quad\max_{\tau_1^i}|f_{\tau_1^i}(x)|\le \a\}.\]
Let $\{B_{R_m^{\frac{1}{3}}}\}$ be a finitely overlapping cover of $\text{Br}_\a^{K}\cap\Omega_m$ by $R_m^{\frac{1}{3}}$-balls. Then by Proposition \ref{trirestprop}, for $R$ large enough depending on $\e$, 
\begin{align*}
    \int_{B_{R_m^{\frac{1}{3}}}} |F_{\tau_1^1}^{m+1}F_{\tau_1^2}^{m+1}F_{\tau_1^3}^{m+1}|^2&\lesssim_\e R^{\e} |B_{R_m^{\frac{1}{3}}}|^{-2}\Big(\int|F_{\tau_1^1}^{m+1}|^2W_{B_{R_m^{\frac{1}{3}}}}\Big)\Big(\int|F_{\tau_1^2}^{m+1}|^2W_{B_{R_m^{\frac{1}{3}}}}\Big)\Big(\int|F_{\tau_1^3}^{m+1}|^2W_{B_{R_m^{\frac{1}{3}}}}\Big).
\end{align*}
The integrals on the right hand side are bounded by 
\[ C_\e \int\sum_{\tau_m}|F_{\tau_m}^{m+1}|^2W_{B_{R_m^{\frac{1}{3}}}} \]
using local $L^2$-orthogonality (Lemma \ref{L2orth}). 
If $x\in \text{Br}_\a^{K}\cap\Omega_m\cap B_{R_m^{\frac{1}{3}}}$, then the above integral is bounded by 
\[ C_\e \int \sum_{\tau_m}|F_{\tau_m}^{m+1}|^2*\w_{\tau_m}W_{B_{R_m^{\frac{1}{3}}}}\lesssim C_\e  \sum_{\tau_m}|F_{\tau_m}^{m+1}|^2*\w_{\tau_m}(x)=C_\e G_m(x) \]
by the locally constant property. Recall that since $x\in\Omega_m$, we have the lower bound $A_\e^{M+N-m}R^\b\le G_m(x)$. Also, by Corollary \ref{highdom}, $G_m(x)\le 2|G_m^h(x)|$. Combining the information so far yields
\[ \a^6|\text{Br}_\a^{K}\cap\Omega_m\cap B_{R_m^{\frac{1}{3}}}|\lesssim_\e K^6R^\e \frac{1}{(A_\e^{M+N-m}R^\b)^3}|B_{R_m^{\frac{1}{3}}}||G_m^h(x)|^6. \]
Use the locally constant property for $G_m^h$ and sum over all $B_{R_m^{\frac{1}{3}}}$ to get
\begin{equation*}\label{Peqn} \a^6|\text{Br}_\a^{K}\cap\Omega_m|\lesssim_\e K^6R^\e  \frac{1}{R^{3\b}}\int_{\R^3}|G_m^h|^6 .\end{equation*}
Note that we dropped the unnecessary factors of $A_\e^{M+N-m}\ge 1$ and that we are done using the properties of the set $\text{Br}_\a^{R_m^{\frac{1}{3}}}(\tau,\tau',\tau'')$, which is why we now integrate over all of $\R^3$ on the right hand side. 

By Lemma \ref{high3}, 
\[ \int_{\R^3}|G_m^h|^6\lesssim_\e R^\e \big(\sum_{\tau_m}\|F_{\tau_m}^{m+1}\|_{L^{12}(\R^3)}^4\big)^3. \]
Use Cauchy-Schwarz and then \eqref{item2} (with $F_{\tau_{m+1}}^{m+1}$) of Lemma \ref{pruneprop} to bound the $L^{12}$ norm by a combination of $L^\infty$ and $L^6$ norms:
\begin{align*}
\big(\sum_{\tau_{m}}\|F_{\tau_{m}}^{m+1}\|_{L^{12}(\R^3)}^4\big)^3\le R^\e K^6\big( K^3 A_\e^{M+N-m}\frac{R^\b}{\a}\big)^6\big(\sum_{\tau_{m+1}}\|F_{\tau_{m+1}}^{m+1}\|_{L^6(\R^3)}^2\big)^3.  
\end{align*}
Next, we use cylindrical $L^6$ decoupling over the parabola to unwind the pruning process. For each $\tau_{m+1}$, we have
\begin{align*}
(\text{\eqref{item1} of Lemma \ref{pruneprop}})\qquad\qquad \|F_{\tau_{m+1}}^{m+1}\|_{L^6(\R^3)}^6&\le \|F_{\tau_{m+1}}^{m+2}\|_{L^6(\R^3)}^6 \\
(\text{$L^6$ cyl. dec. for $\P^1$})\qquad\quad \,\,\qquad \qquad\qquad &\le  B_{\e^5}R^{\e^5}\big(\sum_{\tau_{m+2}\subset\tau_{m+1}}\|f_{\tau_{m+2}}^{m+2}\|_{L^6(\R^3)}^2\big)^3  \\
(\text{iterate previous two inequalities})\qquad\qquad\qquad &\le\cdots\le (B_{\e^5}R^{\e^5})^N \big(\sum_{\tau_{N}\subset\tau_{m+1}}\|f_{\tau_{N}}^{N+1}\|_{L^6(\R^3)}^2\big)^3 . 
\end{align*}
Note that $\{\tau_N\}$ are canonical blocks of the moment curve. Our goal is to have an expression involving the small caps $\g$. We defined the $\g$ so that they lie in the cylindrical region over canonical $R^{-\b}\times R^{-2\b}$ blocks of $\P^1$. Therefore, we may continue unwinding the pruning process using Theorem \ref{cyldec}, ultimately obtaining 
\[ \big(\sum_{\tau_{m+1}}\|F_{\tau_{m+1}}^{m+1}\|_{L^6(\R^3)}^2\big)^3\le(B_{\e^5}R^{\e^5})^{M+N} \big(\sum_{\g}\|f_{\g}\|_{L^6(\R^3)}^2\big)^3 . 
 \]
By Cauchy-Schwarz and using the assumption $\|f_\g\|_\infty\lesssim 1$, we have 
\[ \big(\sum_{\g}\|f_{\g}\|_{L^6(\R^3)}^2\big)^3\le \#\g^2\sum_\g\|f_\g\|_{L^6(\R^3)}^6\lesssim R^{2\b}\sum_\g\|f_\g\|_{L^2(\R^3)}^2. \]
The summary in this case is that 
\[ \a^6|\text{Br}_\a^{K}\cap\Omega_m|\lesssim_\e K^{30}   R^{3\e}A_\e^{10(M+N)} \frac{1}{R^{3\b}}\Big(\frac{R^\b}{\a}\Big)^6(R^{2\b})\sum_\g\|f_\g\|_{L^2(\R^3)}^2.  \]
It suffices to verify that $R^{5\b}\a^{-6}\le R^{2\b+1}\a^{-\frac{2}{\b}}$. This follows immediately from the relation $R^\b\le \a^2$. 

\noindent\underline{Case 3: bounding $|U_\a\cap L|$.} 
Begin by using Lemma \ref{ftofk} to bound
\[ \a^{6+\frac{2}{\b}}|\text{Br}_\a^K\cap L|\lesssim K^{12} \int_{U_\a\cap L}|f|^2|F_1|^{4+\frac{2}{\b}}. \]
Then use Cauchy-Schwarz and the locally constant property for $G_1$ :
\[ \int_{U_\a\cap L}|f|^2|F_1|^{4+\frac{2}{\b}}\lesssim_\e R^\e \int_{U_\a\cap L}|f|^2G_1^{2+\frac{1}{\b}}.\]
Using the definition the definition of $L$, we bound the factors of $G_1$ by
\[ \int_{U_\a\cap L}|f|^2 (A_\e^{M+N}R^\b)^{2+\frac{1}{\b}}. \]
Finally, use $L^2$ orthogonality to conclude
\[    \a^{6+\frac{2}{\b}}|\text{Br}_\a^K\cap L|\lesssim_\e K^{12}R^{2\e}A_\e^{10(M+N)}R^{2\b+1}\sum_\g\|f_\g\|_{L^2(\R^3)}^2. \]

\end{proof}

\subsection{Wave packet decomposition and pigeonholing \label{M3pigeon}}

To prove Theorem \ref{main}, it suffices to prove a local version presented in the next lemma. 

\begin{lemma}\label{loc} Let $\frac{1}{3}\le \b\le 1$ and $p\ge 2$. Then for any $R^{\max(2\b,1)}$-ball $B_{R^{\max(2\b,1)}}\subset\R^3$, suppose that 
\[     \|f\|_{L^p(B_{R^{\max(2\b,1)}})}^p\le C_\e R^\e(R^{\b(\frac{p}{2}-1)}+R^{\b(p-4)-1})\sum_\g\|f_\g\|_{L^p(\R^3)}^p \]
for any Schwartz function $f:\R^3\to\C$ with Fourier transform supported in $\mc{M}^3(R^\b,R)$.  Then Theorem \ref{main} is true. 
\end{lemma}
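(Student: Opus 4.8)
The plan is to deduce the global estimate of Theorem~\ref{main} from its local form by covering $\R^3$ with balls at the natural scale $\rho:=R^{\max(2\b,1)}$ and attaching to each ball a smooth cutoff. First I would fix a Schwartz function $\eta:\R^3\to[0,\infty)$ with $\eta\gtrsim 1$ on the unit ball, with $\widehat\eta$ supported in the unit ball, and with rapid decay; then, letting $\{c_B\}=\rho\Z^3$ index a cover of $\R^3$ by the balls $B=B(c_B,\rho)$, I would set $\eta_B(x):=\eta\big((x-c_B)/\rho\big)$. Then $\widehat{\eta_B}$ is supported in $B(0,\rho^{-1})$, one has $\eta_B\gtrsim 1$ on $B$, and, by the rapid decay of $\eta$, $\sum_B|\eta_B(x)|^p\lesssim 1$ for every $x$.

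Next I would apply the hypothesis of the lemma to $f\eta_B$ for each $B$. Since $\rho^{-1}=\min(R^{-2\b},R^{-1})$ is no larger than the thicknesses of $\mc{M}^3(R^\b,R)$ in the $\xi_2$ and $\xi_3$ directions and no larger than $R^{-\b}$ in the $\xi_1$ direction, the Fourier support $\mc{M}^3(R^\b,R)+B(0,\rho^{-1})$ of $f\eta_B$ is contained in an $O(1)$-dilate of $\mc{M}^3(R^\b,R)$, so the local inequality applies after an $O(1)$-dilation whose cost is absorbed into $C_\e$ (alternatively one runs the argument at scale $R/C$). Likewise each $f_\g\eta_B$ has Fourier support in $\g+B(0,\rho^{-1})\subset C\g$, and the dilates $C\g$ have $O(1)$ overlap, so the small-cap pieces of $f\eta_B$ obey $\sum_\g\|(f\eta_B)_\g\|_{L^p(\R^3)}^p\lesssim\sum_\g\|f_\g\eta_B\|_{L^p(\R^3)}^p$. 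Hence the local hypothesis would give
\[ \|f\eta_B\|_{L^p(B)}^p\le C_\e R^\e\big(R^{\b(\frac p2-1)}+R^{\b(p-4)-1}\big)\sum_\g\|f_\g\eta_B\|_{L^p(\R^3)}^p\qquad\text{for every }B. \]

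Finally, I would sum over $B$. On the left, using $\eta_B\gtrsim1$ on $B$ and that the $B$'s cover $\R^3$, $\sum_B\|f\eta_B\|_{L^p(B)}^p\gtrsim\sum_B\|f\|_{L^p(B)}^p\ge\|f\|_{L^p(\R^3)}^p$. On the right, Tonelli's theorem gives $\sum_B\sum_\g\|f_\g\eta_B\|_{L^p(\R^3)}^p=\sum_\g\int_{\R^3}|f_\g|^p\big(\sum_B|\eta_B|^p\big)\lesssim\sum_\g\|f_\g\|_{L^p(\R^3)}^p$. Combining these and renaming constants would yield Theorem~\ref{main}. The only step needing care is the Fourier-support bookkeeping in passing from $f$ to $f\eta_B$ --- verifying that enlarging $\mc{M}^3(R^\b,R)$ by $\rho^{-1}$ costs only an $O(1)$ factor and that regrouping the caps $\g$ is lossless up to $O(1)$ overlap --- but this is routine precisely because $\rho=R^{\max(2\b,1)}$ was chosen so that $\rho^{-1}\le R^{-2\b}$; there is no genuine analytic obstacle here, the substance being entirely in the local statement.
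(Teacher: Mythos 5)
Your proposal is correct and follows essentially the same route as the paper: cover $\R^3$ by finitely overlapping $R^{\max(2\beta,1)}$-balls, multiply by a cutoff whose Fourier transform is supported near the origin so each $f_\gamma\phi_B$ lies in a slightly enlarged cap, apply the local estimate on each ball, and sum using $\sum_B|\phi_B|^p\lesssim 1$. The only cosmetic difference is that the paper regroups the enlarged caps into $O(1)$ subfamilies and decouples the pieces $f_\gamma\phi_B$ directly, rather than passing through the Fourier projections $(f\eta_B)_\gamma$ as you do.
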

\begin{proof} Write 
\[ \|f\|_{L^p(\R^3)}^p\lesssim \sum_{B_{R^{\max(2\b,1)}}}\int_{B_{R^{\max(2\b,1)}}}|f|^p\]
where the sum is over a finitely overlapping cover of $\R^3$ by $R^{\max(2\b,1)}$-balls. Let $\phi_{B}$ be a weight function decaying by order $100$ away from $B_{R^{\max(2\b,1)}}$, satisfying $\phi_{B}\gtrsim 1$ on $B_{R^{\max(2\b,1)}}$, and with Fourier transform supported in an $R^{-2}$ neighborhood of the origin. The Fourier support of each $f_\g\phi_{B}$ is contained in a $2R^{-\b}\times 4R^{-2\b}\times 2^{\frac{1}{\b}}R^{-1}$ small cap. By the triangle inequality, there is a subset $\mc{S}$ of the small caps $\g$ so that for each $\g\in\mc{S}$, the Fourier support of $f_{\g}\phi_{B}$ is contained in a unique small cap and  
\[ \|f\|_{L^p(B_R)}^p\lesssim \|\sum_{\g\in\mc{S}}f_\g\phi_{B}\|_{L^p(B_{R^2})}^p. \]
Then by applying the hypothesized local version of small cap decoupling, 
\[ \|\sum_{\g\in\mc{S}}f_\g\phi_{B}\|_{L^p(B_{R^2})}^p\le C_\e R^\e(R^{\b(\frac{p}{2}-1)}+R^{\b(p-4)-1})\sum_{\g\in\mc{S}}\|f_\g\phi_{B}\|_{L^p(\R^3)}^p .\]
It remains to note that $\sum_{B_{R^2}}\int |f_\g|^p\phi_{B}^p\lesssim \int|f_\g|^p$. 
\end{proof}

It further suffices to prove a weak, level-set version of Theorem \ref{main}. 
\begin{lemma}\label{alph} Let $p\ge 2$. For each $B_{R^2}$ and Schwartz function $f:\R^3\to\C$ with Fourier transform supported in $\mc{M}^3(R^\b,R)$, there exists $\a>0$ such that
\[ \|f\|_{L^p(B_{R^2})}^p\lesssim_p (\log R)\a^p|\{x\in B_{R^2}:\a\le |f(x)|\}|+R^{-500p}\sum_\g\|f_\g\|_{L^p(\R^3)}^p. \]
\end{lemma}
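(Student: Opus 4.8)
The plan is to prove Lemma \ref{alph} by a routine dyadic pigeonholing of $|f|$ on $B_{R^2}$. I would partition $B_{R^2}$ into the set where $|f|<\lambda$, for a small threshold $\lambda$ to be chosen below, together with the dyadic level sets $E_k=\{x\in B_{R^2}:2^k\lambda\le|f(x)|<2^{k+1}\lambda\}$, $k\ge0$. Since $\int_{\{|f|<\lambda\}}|f|^p\le\lambda^p|B_{R^2}|\lesssim\lambda^pR^6$, choosing $\lambda:=R^{-500-6/p}\big(\sum_\g\|f_\g\|_{L^p(\R^3)}^p\big)^{1/p}$ makes this contribution at most $R^{-500p}\sum_\g\|f_\g\|_{L^p(\R^3)}^p$. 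For the level sets one has
\[ \|f\|_{L^p(B_{R^2})}^p\le\lambda^p|B_{R^2}|+\sum_{k\ge0}(2^{k+1}\lambda)^p|E_k|\le\lambda^p|B_{R^2}|+\big(\#\{k:E_k\ne\emptyset\}\big)\max_{k\ge0}(2^{k+1}\lambda)^p|E_k|, \]
so if at most $O_p(\log R)$ of the $E_k$ are nonempty, pigeonholing picks a single $k_0$, and setting $\a:=2^{k_0}\lambda>0$ (note $E_{k_0}\subset\{x\in B_{R^2}:\a\le|f(x)|\}$) yields $\|f\|_{L^p(B_{R^2})}^p\lesssim_p R^{-500p}\sum_\g\|f_\g\|_{L^p(\R^3)}^p+(\log R)\,\a^p|\{x\in B_{R^2}:\a\le|f(x)|\}|$, as desired.

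So the only point that really needs an argument is that at most $O_p(\log R)$ of the $E_k$ are nonempty, i.e.\ that $\|f\|_{L^\infty(B_{R^2})}/\lambda\lesssim R^{O(1)}$. For this I would establish the crude bound $\|f\|_{L^\infty(\R^3)}\lesssim_pR\big(\sum_\g\|f_\g\|_{L^p(\R^3)}^p\big)^{1/p}$. Fix for each $\g$ a smooth bump $\psi_\g$ equal to $1$ on $\g$ and supported in $2\g$, so that $f_\g=f_\g*\widecheck{\psi_\g}$. Since $\g$ is comparable to an affine image of a fixed box (the relevant linear maps being a diagonal matrix composed with the bounded-determinant frame $\{\g'(t),\g''(t),\g'''(t)\}$) and $|\g|\sim R^{-3\b-1}\le1$, a change of variables gives $\|\widecheck{\psi_\g}\|_{L^{p'}(\R^3)}\lesssim_p|\g|^{1/p}\le1$, whence Young's inequality yields $\|f_\g\|_{L^\infty}\le\|f_\g\|_{L^p}\|\widecheck{\psi_\g}\|_{L^{p'}}\lesssim_p\|f_\g\|_{L^p}$. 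Summing over $\g$ and using Hölder together with $\#\{\g\}\sim R^\b\le R$,
\[ \|f\|_{L^\infty(\R^3)}\le\sum_\g\|f_\g\|_{L^\infty}\lesssim_p\sum_\g\|f_\g\|_{L^p}\le(\#\{\g\})^{1/p'}\Big(\sum_\g\|f_\g\|_{L^p(\R^3)}^p\Big)^{1/p}\le R\Big(\sum_\g\|f_\g\|_{L^p(\R^3)}^p\Big)^{1/p}. \]
Hence $\|f\|_{L^\infty(B_{R^2})}/\lambda\lesssim_pR^{507}$, so $E_k\ne\emptyset$ forces $2^k\lesssim_pR^{507}$ and $\#\{k\ge0:E_k\ne\emptyset\}\lesssim_p\log R$ (for $R\ge2$; bounded $R$ is trivial after adjusting constants).

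I do not anticipate a serious obstacle. The only mildly delicate step is the uniform-in-$\g$ estimate $\|\widecheck{\psi_\g}\|_{L^{p'}}\lesssim_p|\g|^{1/p}$, which is just the statement that $\widecheck{\psi_\g}$ is the dual-affine image of a single Schwartz bump, so its $L^{p'}$ norm scales exactly like $|\g|^{1-1/p'}=|\g|^{1/p}$. One should also dispatch the degenerate cases separately: if $\sum_\g\|f_\g\|_{L^p(\R^3)}^p=0$ then $f\equiv0$ and any $\a>0$ works, and if no $E_k$ is nonempty then the second term of the conclusion is vacuous and one takes $\a=\lambda$; both are immediate.
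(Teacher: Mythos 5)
Your proof is correct and follows essentially the same route as the paper: a dyadic pigeonholing of $|f|$ over $\lesssim_p\log R$ levels, with the negligible low part absorbed into the $R^{-500p}\sum_\g\|f_\g\|_{L^p}^p$ term via a crude per-cap Bernstein bound $\|f_\g\|_\infty\lesssim_p\|f_\g\|_p$ (the paper gets this from the locally constant property plus H\"older) together with the trivial loss $\#\{\g\}\lesssim R$. The only difference is cosmetic: the paper pigeonholes levels $\lambda\|f\|_{L^\infty(B_{R^2})}$ with dyadic $\lambda\in[R^{-1000},1]$, whereas you fix the threshold in terms of $\big(\sum_\g\|f_\g\|_{L^p}^p\big)^{1/p}$ and then control the number of levels by an explicit $L^\infty$ bound.
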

\begin{proof} Split the integral as follows:
\[ \int_{B_{R^2}}|f|^p=\sum_{R^{-1000}\le \lambda\le 1}\int_{\{x\in B_{R^2}:\a\|f\|_{L^\infty(B_{R^2})}\le |f(x)|\le 2\a \|f\|_{L^\infty(B_{R^2})}\}}|f|^p+\int_{\{x\in B_{R^2}:|f(x)|\le R^{-1000}\|f\|_{L^\infty(B_{R^2})}\}}|f|^p\]
in which $\lambda$ varies over dyadic values in the range $[R^{-1000},1]$. If one of the $\lesssim \log R$ many terms in the first sum dominates, then we are done. Suppose instead that the second expression dominates:
\[ \int_{B_{R^2}}|f|^p\le 2\int_{\{x\in B_{R^2}:|f(x)|\le R^{-1000}\|f\|_{L^\infty(B_{R^2})}\}}|f|^p\lesssim R^3R^{-1000p}\|f\|_{L^\infty(B_{R^2})}^p. \]
Then by H\"{o}lder's inequality, we have
\[ \int_{B_{R^2}}|f|^p\lesssim R^3R^{-1000p+(p-1)}\sum_\g\|f_\g\|_{L^\infty(B_{R^2})}^p. \]
Finally, by the locally constant property and H\"{o}lder's inequality, 
\[\|f_\g\|_\infty(B_{R^2})^p\lesssim \||f_\g|*\w_{\g^*}\|_{L^\infty(B_{R^2})}^p\lesssim_p \||f_\g|^p*\w_{\g^*}\|_{L^\infty(B_{R^2})}\lesssim \int_{\R^3}|f_\g|^p. \]

\end{proof}

Use the notation 
\[ U_\a=\{x\in B_{R^2}:\a\le |f(x)|\}. \]
We will show that to estimate the size of $U_\a$, it suffices to replace $f$ with a version whose wave packets have been pigeonholed. Write 
\begin{align}\label{sum} f=\sum_\g\sum_{T\in\T_\g}\s_Tf_\g \end{align}
where for each $\g$, $\{\s_T\}_{T\in\T_\g}$ is the partition of unity from a partition of unity from \textsection\ref{prusec}. If $\a\le C_\e (\log R)R^{-500}\max_\g\|f_\g\|_{\infty}$, then by an analogous argument as dealing with the low integral over $\{x:|f(x)|\le R^{-1000}\|f\|_\infty\}$ in the proof of Lemma \ref{alph}, bounding $\a^p|U_\a|$ by the right hand side of the small cap decoupling theorem is trivial. Let $\phi_{B}$ be the weight function from Lemma \ref{loc}. 

\begin{prop}[Wave packet decomposition] \label{wpd} Let $\a>C_\e (\log R) R^{-100}\max_\g\|f_\g\|_{L^\infty(\R^3)}$. There exist subsets $\mc{S}\subset\{\g\}$ and  $\tilde{\T}_\g\subset\T_\g$, as well as a constant $A>0$ with the following properties:
\begin{align} 
|U_\a|\lesssim (\log R)|\{x\in U_\a:\,\,\a&\lesssim |\sum_{\g\in\mc{S}}\sum_{T\in\tilde{\T}_\g}\s_T(x)\phi_{B}(x)f_\g (x)|\,\,\}|, \\
\|\sum_{T\in\tilde{\T}_\g}\s_T\phi_{B}f_\g\|_{L^\infty(\R^3)}&\sim A\qquad\text{for all}\quad  \g\in\mc{S}, \\
\text{and } \quad \#\tilde{\T}_\g A^p R^{\b+2\b+1}\lesssim\|\sum_{T\in\tilde{\T}_\g}\s_T&\phi_{B}f_\g\|_{L^p(\R^3)}\lesssim R^{3p\e }\#\tilde{\T}_\g A^p R^{\b+2\b+1}\quad\text{ for all}\quad \g\in{\mc{S}}. \label{Lpprop}
\end{align}
\end{prop}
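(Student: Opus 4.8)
The plan is to obtain the statement by a short sequence of dyadic pigeonholing steps on the wave packets $\s_T\phi_B f_\g$, the only input beyond bookkeeping being the locally constant property (Lemma~\ref{locconst}). Write $B$ for the ball on which $\phi_B\gtrsim1$ and off which $\phi_B$ decays, and $M_0:=\max_\g\|f_\g\|_{L^\infty(\R^3)}$; the hypothesis $\a>C_\e(\log R)R^{-100}M_0$ is used only through $M_0\lesssim \a R^{100}$, to discard a negligible part. First I would note that, since $\sum_{T\in\T_\g}\s_T\equiv1$ and $\phi_B\gtrsim1$ on $B\supseteq U_\a$, on $U_\a$ one has $|f|\lesssim|\phi_B f|=\big|\sum_\g\sum_{T\in\T_\g}\s_T\phi_B f_\g\big|$. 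As $\phi_B$ decays off $B$ and each $\s_T$ decays rapidly off $T$, only $\lesssim R^{O(1)}$ pairs $(\g,T)$ have $\|\s_T\phi_B f_\g\|_{L^\infty(\R^3)}\ge\tau:=\a R^{-100}$, and the total pointwise contribution of all other (``light'') pairs is $\lesssim(\#\g)\,\tau+R^{O(1)}M_0R^{-1000}\ll\a$. Thus on $U_\a$ we may replace $\phi_B f$ by the sum over the remaining (``heavy'') pairs, whose wave-packet $L^\infty$-sizes all lie in the window $[\tau,CM_0]$ of $\lesssim\log R$ dyadic scales.

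Next I would pigeonhole the heavy pairs by $\|\s_T\phi_B f_\g\|_{L^\infty(\R^3)}\in[2^j,2^{j+1})$: for $x\in U_\a$ the heavy sum at $x$ is $\gtrsim\a$, so one of the $\lesssim\log R$ scales contributes $\gtrsim\a/\log R$ there, and hence there is a single $A=2^{j_0}$ for which this holds on a subset of $U_\a$ of measure $\gtrsim|U_\a|/\log R$. I then set $\mc{S}:=\{\g:\ \exists\,T,\ \|\s_T\phi_B f_\g\|_{L^\infty(\R^3)}\in[A,2A)\}$ and, for $\g\in\mc{S}$, $\tilde\T_\g:=\{T\in\T_\g:\ \|\s_T\phi_B f_\g\|_{L^\infty(\R^3)}\in[A,2A)\}$. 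This gives the first displayed inequality (with the stated $\log R$; strictly one loses a further factor $\log R$ here, which I would absorb into the implied constant, equivalently reading $\a$ there as $\a/\log R$).

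For (2) and (3) the point is that for a \emph{fixed} $\g$ there is no cancellation among the surviving wave packets: since the $\s_T$ are nonnegative with $\sum_{T\in\T_\g}\s_T\equiv1$, we have $\sum_{T\in\tilde\T_\g}\s_T\phi_B f_\g=\big(\sum_{T\in\tilde\T_\g}\s_T\big)\phi_B f_\g$ with $0\le\sum_{T\in\tilde\T_\g}\s_T\le1$. By the locally constant property $\phi_B f_\g$ varies slowly at the scale $\g^*$ of the tubes, so $\|\s_T\phi_B f_\g\|_{L^\infty(\R^3)}\in[A,2A)$ forces $|\phi_B f_\g|\sim A$ on the core of $T$ and $\lesssim A$ on $T$ and its neighbours; hence $\big(\sum_{T\in\tilde\T_\g}\s_T\big)|\phi_B f_\g|\lesssim A$ everywhere (away from $\cup_{T\in\tilde\T_\g}2T$ the factor $\sum\s_T$ is super-polynomially small, which kills the remaining region since there $|\phi_B f_\g|\le M_0\lesssim\a R^{100}\ll A\,R^{1000}$), while it is $\gtrsim A$ on the core of each $T\in\tilde\T_\g$. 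Taking a supremum gives (2). For (3), using $|T|=R^{\b}R^{2\b}R=R^{\b+2\b+1}$: integrating the pointwise bound $\sim A$ over the essentially disjoint cores gives $\|\sum_{T\in\tilde\T_\g}\s_T\phi_B f_\g\|_{L^p(\R^3)}^p\gtrsim\#\tilde\T_\g\,A^p R^{\b+2\b+1}$, and $\int|\s_T\phi_B f_\g|^p\lesssim A^p|T|$ together with the $\lesssim R^{3\e}$ overlap of the dilates $R^\e T$ (plus a negligible tail) gives the matching upper bound $\lesssim R^{3p\e}\#\tilde\T_\g\,A^p R^{\b+2\b+1}$. (The middle quantity in the statement should read $\|\cdot\|_{L^p(\R^3)}^p$.)

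The main obstacle, such as it is, is exactly the verification of (2) and the lower bound in (3): one must be sure that pigeonholing the amplitudes does not destroy the reconstruction, i.e.\ that the surviving $\sim A$-size wave packets of a given $\g$ neither cancel (they cannot, being nonnegative multiples of a common $\phi_B f_\g$) nor conspire to be much larger than $A$ (ruled out by the slow variation of $\phi_B f_\g$ at the tube scale, from Lemma~\ref{locconst}). Everything else --- counting the $R^{O(1)}$ relevant tubes, the $O(\log R)$ dyadic scales, and tracking the harmless $\log R$ losses --- is routine.
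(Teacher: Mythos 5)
Your overall route is the same as the paper's: discard far/small wave packets using the lower bound on $\a$, dyadically pigeonhole the amplitudes $\|\s_T\phi_Bf_\g\|_{L^\infty}$ into $O(\log R)$ scales, pick the dominant scale, and verify the $L^p$ property via the locally constant heuristic and the $R^{3\e}$ overlap of the dilated tubes. Properties (1) and (3) are argued at essentially the same level of rigor as in the paper. But there is a genuine gap in your verification of (2), i.e.\ of the upper bound $\|\sum_{T\in\tilde{\T}_\g}\s_T\phi_Bf_\g\|_{L^\infty}\lesssim A$ with $A$ taken to be the common wave-packet amplitude. Your argument covers two regimes: on $T$ and its immediate neighbours, where membership $\|\s_T\phi_Bf_\g\|_\infty<2A$ together with $\s_T\gtrsim 1$ there forces $|\phi_Bf_\g|\lesssim A$; and far from $\cup_{T\in\tilde{\T}_\g}T$, where you claim $\sum_{T\in\tilde{\T}_\g}\s_T$ is super-polynomially small. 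The latter claim is false as stated: just outside $\cup 2T$ (and more generally at distance between $O(1)$ and polynomially many tube-widths from $\tilde{\T}_\g$) the functions $\s_T$ have only rapid-decay \emph{upper} bounds, so $\sum_{T\in\tilde{\T}_\g}\s_T(x)$ need not be tiny, while $|\phi_Bf_\g(x)|$ is only controlled by $\max_\g\|f_\g\|_\infty\lesssim \a R^{100}$, not by $A$. In this intermediate region the membership constraints only give $\s_T(x)\le 2A/|\phi_Bf_\g(x)|$ per tube, and summing over the $\sim D^3$ tubes within distance $D$ plus tails yields a bound of the form $D^3A + C_nD^{-(2n-3)}\max_\g\|\phi_Bf_\g\|_\infty$; optimizing only gives $\|\sum_{T\in\tilde{\T}_\g}\s_T\phi_Bf_\g\|_\infty\lesssim R^{O(\e)}A$, not $\lesssim A$ with absolute constants.

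This is exactly the point where the paper does something you skipped: it proves only the two-sided bound $\lambda M\lesssim\|\sum_{T\in\T^c_{\g,\lambda}}\s_T\phi_Bf_\g\|_\infty\lesssim R^{3\e}\lambda M$ (via the pointwise splitting into tubes with $x\in R^\e T$ and a negligible remainder), and then runs one further dyadic pigeonholing over $\mu\in[1,R^{O(\e)}]$, defining $\mc{S}$ as the set of $\g$ with $\|\sum_T\s_T\phi_Bf_\g\|_\infty\sim\mu M$ and setting $A=\mu M$. So to repair your argument you should either add this extra pigeonholing step (costing another harmless $\log R$), or weaken (2) to $A\lesssim\|\cdot\|_\infty\lesssim R^{O(\e)}A$ — which would still serve the later applications but is not the statement as written. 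The rest of your write-up (the heavy/light reduction using $\a>C_\e(\log R)R^{-100}\max_\g\|f_\g\|_\infty$, the non-cancellation of the selected packets of a fixed $\g$, and the $L^p$ bounds with the $R^{3p\e}$ loss from the overlap of $R^\e T$) matches the paper's proof.
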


\begin{proof} 
Split the sum (\ref{sum}) into 
\begin{equation}\label{step1} \phi_{B}f=\sum_\g\sum_{T\in\T_\g^c}\s_T\phi_{B}f_\g+\sum_\g\sum_{T\in\T_\g^f}\s_T\phi_{B}f_\g\end{equation} 
where the close set is
\[ \T_\g^c:=\{T\in\T_\g:T\cap R^{10} B_{R^2}\not=\emptyset\}\]
and the far set is 
\[ \T_\g^f:=\{T\in\T_\g:T\cap R^{10}B_{R^2}=\emptyset\} . \]
Using decay properties of the partition of unity, for each $x\in B_{R^2}$, 
\[ |\sum_\g\sum_{T\in\T_\g^f}\s_T(x)\phi_{B}(x)f_\g(x)|\lesssim R^{-1000}\max_\g\|\phi_{B}f_\g\|_{L^\infty(B_{R^2})}. \] 
Therefore, using the assumption that $\a$ is at least $R^{-100}\max_\g\|f_\g\|_{L^\infty(B_{R^2})}$, 
\[ |U_\a|\le 2|\{x\in U_\a:\,\,\a\le 2 |\sum_\g\sum_{T\in\tilde{\T}_\g^c}\s_T(x)\phi_{B}(x)f_\g(x)|\,\,\}|. \]
The close set  has cardinality $|\T_{\g}^c|\leq R^{33}$.  Let
 \begin{equation}\label{eq: defM}
	M=\max_\g\max_{T\in\T_\g^c}\|\s_T\phi_{B}f_\g\|_{L^\infty(\R^3)}.
	\end{equation} 
Split the remaining wave packets into 
\begin{equation} \label{step2}
    \sum_\g\sum_{T\in\T_\g^c}\s_T\phi_{B}f_\g=\sum_\g\sum_{R^{-10^3}\le \lambda\le 1}\sum_{T\in\T_{\g,\lambda}^c}\s_T\phi_{B}f_\g+\sum_\g\sum_{T\in\T_{\g,s}^c}\s_T\phi_{B}f_\g
\end{equation}
where $\lambda$ is a dyadic number in the range $[R^{-10^3},1]$,  
\[ \T_{\g,\lambda}^c:=\{T\in\T_\g^c:\|\s_T\phi_{B}f_\g\|_{L^\infty(\R^3)}\sim \lambda M \},\]
and
\[ \T_{\g,s}^c:= \{T\in\T_\g^c:\|\s_T\phi_{B}f_\g\|_{L^\infty(\R^3)}\le R^{-1000}M \} . \]
Again using the lower bound for $\a$, the small wave packets cannot dominate and we have 
\[ |U_\a|\le 4|\{x\in U_\a:\,\,\a\le 4|\sum_\g \sum_{R^{-10^3}\le \lambda\le 1}\sum_{T\in\T_{\g,\lambda}^c}\s_T(x)\phi_{B}(x)f_\g(x)|\,\,\}|.\]
By dyadic pigeonholing, for some $\lambda\in [R^{-1000},1]$, 
\[ |U_\a|\lesssim (\log R)|\{x\in U_\a:\,\,\a\lesssim (\log R)|\sum_\g \sum_{T\in\T_{\g,\lambda}^c}\s_T(x)\phi_{B}(x)f_\g(x)|\,\,\}|. \]
Finally, we analyze the $L^p$ norm for each $p\ge 2$ and each $\g$. Note that we have the pointwise inequality 
\begin{align*} 
|\sum_{T\in\T_{\g,\lambda}^c}\s_T(x)\phi_{B}(x)f_\g(x)|&= |\sum_{\substack{T\in\T_{\g,\lambda}^c\\ x\in R^\e T}}\s_T(x)\phi_{B}(x)f_\g(x)|+|\sum_{\substack{T\in\T_{\g,\lambda}^c\\ x\not\in R^\e T}}\s_T(x)\phi_{B}(x)f_\g(x)| \\
&\le  |\sum_{\substack{T\in\T_{\g,\lambda}^c\\ x\in R^\e T}}\s_T(x)\phi_{B}(x)f_\g(x)|+C_\e R^{-1000}|\phi_{B}(x)f_\g(x)| .
\end{align*}
Let $\mc{S}'$ be the subset of $\{\g\}$ for which 
\[ \|\sum_{T\in\T_{\g,\lambda}^c}\s_T\phi_{B} f_\g\|_{L^\infty(\R^3)}\ge C_\e R^{-500}\max_\g\|\phi_{B}f_\g\|_{L^\infty(\R^3)}.  \]
Using the lower bound for $\a$, we then have 
\[ |U_\a|\lesssim (\log R)|\{x\in U_\a:\,\,\a\lesssim (\log R)|\sum_{\g\in\mc{S}'} \sum_{T\in\T_{\g,\lambda}^c}\s_T(x)\phi_{B}(x)f_\g(x)|\,\,\}|.\]
It follows from the pointwise inequality above that for each $\g\in\mc{S}'$, 
\[ \lambda M\lesssim \|\sum_{T\in\T_{\g,\lambda}^c}\s_T \phi_{B}f_\g\|_{L^\infty(\R^3)}\lesssim R^{3\e} \lambda M.\]
Perform one more dyadic pigeonholing step to obtain a dyadic $\mu\in[1,R^\e]$ for which 
\[ |U_\a|\lesssim (\log R)^2|\{x\in U_\a:\a\lesssim (\log R)^2|\sum_{\g\in\mc{S}}\sum_{T\in\T_{\g,\lambda}^c}\s_T(x)\phi_{B}(x)f_\g(x)|\}| \]
where $\mc{S}$ is the set of $\g$ satisfying $\|\sum_{T\in\T_{\g,\lambda}^c}\s_T\phi_{B} f_\g\|_{L^\infty(\R^3)}\sim\mu M$. 

It remains to check the property about the $L^p$ norms. For each $\g\in\mc{S}$, using the locally constant property, we have
\begin{align*}
\#\T_{\g,\lambda}^cR^{\b+2\b+1}(\mu M)^p&\lesssim \sum_{T\in\T_{\g,\lambda}^c}\int|\s_T\phi_{B}f_\g|^p \lesssim  \int|\sum_{T\in\T_{\g,\lambda}^c}\s_T\phi_{B}f_\g|^p \\
    &\lesssim \int|\sum_{\substack{T\in\T_{\g,\lambda}^c\\ x\in R^\e T}}\s_T(x)f_\g(x)|^pdx+C_\e R^{-1000p}\|\phi_{B}f_\g\|_{L^p(\R^3)}^p \\
    &\lesssim R^{3p\e}\#\T_{\g,\lambda}^c R^{\b+2\b+1}(\mu M)^p+C_\e R^{-1000p}\|\phi_{B}f_\g\|_{L^p(\R^3)}^p.
\end{align*}
By construction, we have $M\ge C_\e R^{-501}\max_\g\|f_\g\|_{L^\infty(\R^3)}$. It follows that 
\[ C_\e R^{-1000p}\|\phi_{B}f_\g\|_{L^p(\R^3)}^p\lesssim R^{-100}\#\T_{\g,\lambda}^c R^{\b+2\b+1}(\mu M)^p \]
which concludes the proof.

\end{proof}

\subsection{Trilinear reduction \label{broadnarrow}}

We will present a broad/narrow analysis to show that Proposition \ref{mainprop} implies the following level set version of Theorem \ref{main}, for the critical $p=6+\frac{2}{\b}$. 

\begin{theorem}\label{alphamain} For any $R\ge 2$, $\frac{1}{3}\le\b\le1$, and $\a>0$, 
\[ \a^{6+\frac{2}{\b}}|U_\a|\lesssim_\e R^{O(\e)} R^{2\b+1} \sum_\g\|f_\g\|_2^2\]
for any Schwartz function $f:\R^3\to\C$ with Fourier transform supported in $\mc{M}^3(R^\b,R)$ and satisfying $\|f_\g\|_\infty\le 2$ for all $\g$. 
\end{theorem}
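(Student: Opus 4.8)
The plan is to deduce Theorem~\ref{alphamain} from the broad estimate Proposition~\ref{mainprop} by a Bourgain--Guth broad/narrow analysis at the coarsest scale $R_1=R^\e$, together with parabolic rescaling of the moment curve and an induction on $R$, run simultaneously over all $\b\in[\tfrac13,1]$ with the trivial range $R\lesssim_\e1$ as base case. Fix $K\sim R^{\e/3}$. For each $x\in U_\a$ I would sort the canonical caps $\{\tau_1\}$ --- there are only $\lesssim R^{\e/3}$ of them, of size $R^{-\e/3}$ --- by the size of $|f_{\tau_1}(x)|$ and run the usual pigeonholing: either there exist three pairwise $\ge 10\underline{C}_\e R^{-\e/3}$-separated caps $\tau_1^1,\tau_1^2,\tau_1^3$ with $|f_{\tau_1^i}(x)|\gtrsim R^{-\e/3}\a$ and all $\le\a$, so that $x\in\mathrm{Br}_\a^{K}$ for that triple, or no such separated triple exists (equivalently, the caps with $|f_{\tau_1}(x)|\gtrsim R^{-\e/3}\a$ lie in $O_\e(1)$ clusters of consecutive caps), in which case summing over caps and controlling the tail by $\tfrac12\a$ gives $\a\le|f(x)|\lesssim_\e\max_{\tau_1}|f_{\tau_1}(x)|$. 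Hence $U_\a\subset\big(\bigcup_{\text{triples}}\mathrm{Br}_\a^{K}\big)\cup\cN$, where $\cN=\{x\in B_{R^{\max(2\b,1)}}:\a\lesssim_\e\max_{\tau_1}|f_{\tau_1}(x)|\}$ and there are $\lesssim R^\e$ triples.

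\textbf{Broad part.} Summing Proposition~\ref{mainprop} over the $\lesssim R^\e$ triples and using $\|f_\g\|_\infty\le2$ gives
\[ \a^{6+\frac2\b}\Big|U_\a\cap\bigcup_{\text{triples}}\mathrm{Br}_\a^{K}\Big|\lesssim_\e R^\e K^{50}R^{10\e}A_\e^{10(M+N)}R^{2\b+1}\sum_\g\|f_\g\|_2^2\lesssim_\e R^{O(\e)}R^{2\b+1}\sum_\g\|f_\g\|_2^2, \]
since $M+N\lesssim_\e1$, $A_\e$ is a fixed $\e$-dependent constant, and $K^{50}=R^{O(\e)}$ for $K\sim R^{\e/3}$.

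\textbf{Narrow part.} Here $\a^{6+\frac2\b}|\cN|\lesssim_\e\sum_{\tau_1}\a^{6+\frac2\b}\big|\{x\in B_{R^{\max(2\b,1)}}:c_\e\a\le|f_{\tau_1}(x)|\}\big|$. For each $\tau_1$ I would apply the moment-curve parabolic rescaling $T_{\tau_1}$ carrying the canonical $R^{-\e/3}$-block $\tau_1$ to the standard neighbourhood $\mc{M}^3(R^{\b-\e/3},R^{1-\e})$; the (sup-norm--preserving) rescaled function $g$ satisfies $\|g_{\g'}\|_\infty=\|f_\g\|_\infty\le2$, where the small caps $\g\subset\tau_1$ correspond to the small caps $\g'$ at parameters $R'=R^{1-\e}$, $\b'=\frac{\b-\e/3}{1-\e}$, and the Jacobian factors relating $\|g_{\g'}\|_2^2$, $|U^{g}|$ to $\|f_\g\|_2^2$, $|U^{f_{\tau_1}}|$ cancel. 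When $\b<1$ one has $\b'\in[\tfrac13,1]$ (for $\b$ in the shrinking window $(1-\tfrac{2\e}3,1)$ one only rescales partway, to caps of size $R^{-\d/3}$ with $\d=\tfrac{3(1-\b)}2>0$, so the rescaled parameter is exactly $1$), so the inductive hypothesis --- Theorem~\ref{alphamain} at scale $R'$ and its own critical exponent $6+\tfrac2{\b'}$ --- applies. Covering $B_{R^{\max(2\b,1)}}$ by $\lesssim R^{\e/3}$ balls of radius $(R')^{\max(2\b',1)}$ (the number needed, after rescaling, to cover the image ellipsoid), applying the hypothesis on each, using the crude bound $\a\lesssim\|f_{\tau_1}\|_\infty\lesssim R^{\b-\e/3}$ to pass from exponent $6+\tfrac2{\b'}$ to $6+\tfrac2\b$, and unwinding the rescaling, I expect
\[ \a^{6+\frac2\b}\big|\{c_\e\a\le|f_{\tau_1}|\}\big|\lesssim_\e\mathrm{Dec}_{\b'}(R')\,R^{\e/3}\,R^{-c_0\e}\,R^{2\b+1}\sum_{\g\subset\tau_1}\|f_\g\|_2^2, \]
where $\mathrm{Dec}_\b(R)$ is the best constant in Theorem~\ref{alphamain}. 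The decisive point is that the crude $L^\infty$ bound forces the $R$-power on the right to be $R^{2\b+1-c_0\e}$ with $c_0=\tfrac{2-\b}{3\b}>\tfrac13$ for $\b<1$, strictly beating the $R^{\e/3}$ ball count. Summing over $\tau_1$ (each $\g$ lies in a unique $\tau_1$, so $\sum_{\tau_1}\sum_{\g\subset\tau_1}\|f_\g\|_2^2=\sum_\g\|f_\g\|_2^2$) and combining with the broad part yields the recursion
\[ \mathrm{Dec}_\b(R)\lesssim_\e R^{O(\e)}+R^{(\frac13-c_0)\e}\,\mathrm{Dec}_{\b'}(R^{1-\e}), \]
which (for $R$ large depending on $\e$, and trivially otherwise) unrolls to $\mathrm{Dec}_\b(R)\lesssim_\e R^{O(\e)}$, proving the theorem.

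\textbf{Main obstacle.} The genuinely delicate point is the endpoint $\b=1$: there $\b'=\frac{1-\e/3}{1-\e}>1$ leaves the admissible range, and no rescaling fixes this because the vertical $R^{-1}$-slab of the small caps cannot be compressed (and $c_0=\tfrac13$, so the recursion no longer strictly contracts). This case must be handled separately, outside the induction --- for $\b=1$ the small caps inside $\tau_1$ factor, after rescaling only the two curved variables, through a small-cap decoupling problem for the parabola, which one finishes using cylindrical decoupling over $\P^1$ (Theorem~\ref{cyldec}) together with the known sharp small-cap decoupling for $\P^1$. The remaining issue, as always, is keeping the accumulated $\e$-dependent constants and $R^{O(\e)}$ losses under control so that the recursion genuinely closes; this is carried out exactly as in \cite{bdg} and \cite{gmw}.
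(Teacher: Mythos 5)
Your broad step and the $\b<1$ rescaling bookkeeping are essentially sound (the contraction exponent $c_0=\frac{2-\b}{3\b}$ does check out), and your architecture is genuinely different from the paper's: you do one broad/narrow step at scale $R^{\e/3}$ and then induct on $R$ with Theorem \ref{alphamain} itself (at parameter $\b'$) as the inductive hypothesis, whereas the paper never inducts on the theorem --- it runs an algorithm through all intermediate scales $R_k$, applying the broad estimate Proposition \ref{mainprop} to rescaled functions at every scale, and disposes of the ``narrow at every scale'' remainder by a trivial bound.

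The genuine gap is the regime where the rescaled small-cap parameter reaches $1$, and in your scheme this regime is unavoidable and carries the whole weight of the proof. The map $\b\mapsto\b'=\frac{\b-\e/3}{1-\e}$ has its fixed point at $\frac13$ and is expanding, so for every starting $\b\in(\frac13,1]$ the iteration enters the window $\b'\ge 1-\frac{2\e}{3}$ after finitely many steps; your ``rescale partway so the new parameter is exactly $1$'' then delegates everything to your separate $\b=1$ argument. That argument --- rescale the two curved variables and finish with cylindrical decoupling over $\P^1$ (Theorem \ref{cyldec}) plus sharp small-cap (here in fact canonical, since the caps are $R^{-1}\times R^{-2}$) decoupling for the parabola --- cannot give the claimed bound. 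At $\b=1$ the critical exponent is $p=8$ and the target is $\a^{8}|U_\a|\lesssim_\e R^{3+\e}\sum_\g\|f_\g\|_2^2$; any argument that treats $\xi_3$ cylindrically is blind to the torsion of the moment curve and is capped by the sharp two-dimensional constant. Concretely, in exponential-sum form the $\b=1$, $p=8$ case is Theorem \ref{mainexp} with $\sigma=2$, $s=4$, i.e. the bound $N^{2+\e}$ (Bombieri--Iwaniec); freezing $x_3$ and applying the sharp parabola estimate with unimodular coefficients gives only $\int_{[0,1]^2\times[0,N^{-2}]}|\sum_{k\le N}e(kx_1+k^2x_2+k^3x_3)|^{8}dx\lesssim N^{-2}\cdot N^{5+\e}=N^{3+\e}$, a full power of $N$ worse, and no support-based (decoupling) input in the first two variables can recover that power. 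This is precisely why the paper's handling of the corresponding regime ($R_{k-1}^{2/3}\ge R^{1-\b}$) is different in kind: it still applies the three-dimensional broad estimate, but with coarser caps $\z$ of dimensions $R_{k-1}^{2/3}R^{-1}\times R_{k-1}^{4/3}R^{-2}\times R^{-1}$ after a wave-packet pigeonholing, and cylindrical $L^6$ decoupling is used only to descend from $\z$ to $\g$ inside an already-established bound with the $\|\cdot\|_\infty^6\|\cdot\|_2^2$ structure --- it is never asked to produce the main power $R^{2\b+1}$. Without some genuinely three-dimensional input at parameter $1$ (in the paper, ultimately the small-cap cone decoupling driving Proposition \ref{mainprop}), your recursion does not close.
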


\begin{proof}[Proposition \ref{mainprop} implies Theorem \ref{alphamain}]

We present an algorithm incorporating a broad-narrow argument. For each $k$, $1\le k\le N$, recall that $\{\tau_k\}$ is a collection of canonical $\sim R_k^{-\frac{1}{3}}\times R_k^{-\frac{2}{3}}\times R_k^{-1}$ moment curve blocks. Write $\ell(\tau)=r^{-1}$ to denote that $\tau$ is a canonical $r^{-1}\times r^{-2}\times r^{-3}$ moment curve block. 

Step 1 of the algorithm is as follows. Let $E_\e$ be a constant we choose to be larger than $10 \underline{C}_\e$, where $\underline{C}_\e$ is from Lemma \ref{pruneprop}. We have the broad/narrow inequality
\begin{align}\label{brnar}
    |f(x)|&\le 4E_\e\max_{\tau_1}|f_{\tau_1}(x)|+R^{2\e} \max_{\substack{d(\tau_1^i,\tau_1^j)\ge E_\e R_1^{-\frac{1}{3}}}}|f_{\tau_1^1}(x)f_{\tau_1^2}(x)f_{\tau_1^3}(x)|^{\frac{1}{3}} 
\end{align}
where the second term is the maximum over 3-tuples of $\tau_1$ which are pairwise $\ge E_\e R_1^{-\frac{1}{3}}$-separated. 
Indeed, suppose that the set $\{\tau_1:|f_{\tau_1}(x)|\ge R_1^{-\frac{1}{3}}\max_{\tau_1'}|f_{\tau_1'}(x)|\}$ has at least $3E_\e$ elements. Then we can find three $\tau_1^1,\tau_1^2,\tau_1^3$ which are pairwise $\ge E_\e R_1^{-\frac{1}{3}}$-separated and satisfy $|f(x)|\le R^{2\e}|f_{\tau_1^1}(x)f_{\tau_1^2}(x)f_{\tau_1^3}(x)|^{\frac{1}{3}}$. If there are fewer than $3E_\e$ elements, then $|f(x)|\le 3E_\e \max_{\tau_1'}|f_{\tau_1'}(x)|+\max_{\tau_1'}|f_{\tau_1'}(x)|$.

Suppose that
\[ |U_\a|\le 2|\{x\in U_\a:\max_{\tau_1}|f_{\tau_1}(x)|\le\a\}|. \]
If this does not hold, then proceed to Step 2 of the algorithm. Further suppose that there are blocks $\tau_1^i$ which satisfy $d(\tau_1^i,\tau_1^j)\ge E_\e R_1^{-\frac{1}{3}}$ and  
\begin{align}\label{step1alg}
    |U_\a|\lesssim R^{3\e} |\{x\in U_\a:\a\le 2R^{2\e}|f_{\tau_1^1}(x)f_{\tau_1^2}(x)f_{\tau_1^3}(x)|^{\frac{1}{3}},\quad\max_{\tau_1}|f_{\tau_1}(x)|\le \a\}|. 
\end{align}
If \eqref{step1alg} does not hold, then proceed to Step 2 of the algorithm. 
Assuming \eqref{step1alg}, apply Proposition \ref{mainprop} to get the inequality
\[ \a^{6+\frac{2}{\b}}|U_\a|\lesssim_\e R^{O(\e)} R^{2\b+1}\sum_{{\g}}\|{f}_\g\|_2^2 ,\]
which terminates the algorithm. 

Next, we describe step $k$ of the algorithm for $k\ge 2$ and $ R_{k-1}^{\frac{2}{3}} \le R^{1-\b}$. The input for step $k$ is 
\begin{align}\label{stepkalg}
    |U_\a|\lesssim_\e(\log R)^{k-1}|\{x\in U_\a:\a\lesssim (\log R)^{k-1}\max_{{\tau}_{k-1}}|f_{{\tau}_{k-1}}(x)|\}| .
\end{align}
For each $\tau_{k-1}$, we have the broad-narrow inequality 
\begin{align*}
|f_{{\tau}_{k-1}}(x)|&\le 2E_\e\max_{\substack{{\tau}_k\subset{\tau}_{k-1}}}|f_{{\tau}_k}(x)|+R^{2\e}\max_{\substack{\tau_k^i\subset{\tau}_{k-1} \\d(\tau_k^i,\tau_k^j)\ge E_\e R_k^{-\frac{1}{3}}}}|f_{\tau_k^1}(x)f_{\tau_k^2}(x)f_{\tau_k^3}(x)|^{\frac{1}{3}}. 
\end{align*} 
Either proceed to Step $k+1$ or assume that 
\[ |U_\a|\lesssim (\log R)^{k-1} |\{x\in U_\a:\a\lesssim (\log R)^{k-1}\max_{\tau_{k-1}}|{f}_{\tau_{k-1}}(x)|,\quad \max_{\tau_k}|f_{\tau_k}(x)|\le \a\}| . \]
Again, either proceed to Step $k+1$ or assume further that 
there are $\tau_k^i\subset\tau_{k-1}$ which are pairwise $\ge E_\e R_k^{-\frac{1}{3}}$-separated  and satisfy 
\[ |U_\a|\le (\log R)^kR^{3\e}\sum_{\tau_{k-1}}|\{x\in U_\a:\a\lesssim (\log R)^{k-1}R^{\e}|f_{{\tau}_{k}^1}(x)f_{\tau_k^2}(x)f_{\tau_k^3}(x)|^{\frac{1}{3}},\quad \max_{\tau_k}|f_{\tau_k}(x)|\le \a\}|. \]
By rescaling for the moment curve, there exists a linear transformation $T$ so that $|f_{\tau_k^i}\circ T|=|g_{\underline{\tau}_k^i}|$ where the $\underline{\tau}_k^i$ are pairwise $\gtrsim E_\e R_1^{-\frac{1}{3}}$-separated blocks and $g$ is Fourier supported in the anisotropic neighborhood $\mc{M}^3(R_{k-1}^{-\frac{1}{3}}R^\b,R_{k-1}^{-1}R)$. Indeed, suppose that ${\tau}_{k-1}$ is the $l$th piece
\[ {\tau}_{k-1}=\{(\xi_1,\xi_2,\xi_3): lR_{k-1}^{-\frac{1}{3}}\le \xi_1<(l+1)R_{k-1}^{-\frac{1}{3}},\,|\xi_2-\xi_1^2|\le R_{k-1}^{-\frac{2}{3}},\,|\xi_3-3\xi_1\xi_2+2\xi_1^3|\le R_{k-1}^{-1} \}  .\]
Since the Fourier support of $f$ is in $\mc{M}^3(R^\b,R)$ by hypothesis, the Fourier support of $f_{{\tau}_{k-1}}$ is in ${\tau}_{k-1}\cap\mc{M}^3(R^\b,R)$. Define the affine transformation $L(\xi_1,\xi_2,\xi_3)$ by 
\begin{align*}
\xi_1&\mapsto \,\,R_{k-1}^{\frac{1}{3}}(\xi_1-lR_{k-1}^{-\frac{1}{3}})\\
\xi_2&\mapsto \,\,R_{k-1}^{\frac{2}{3}}(\xi_2-l^2R_{k-1}^{-\frac{2}{3}})-2lR_{k-1}^{\frac{1}{3}}(\xi_1-lR_{k-1}^{-\frac{1}{3}})\\
\xi_3&\mapsto \,\,R_{k-1}(\xi_3-l^3R_{k-1}^{-1})-3lR_{k-1}^{\frac{2}{3}}(\xi_2-l^2R_{k-1}^{-\frac{2}{3}})+3l^2R_{k-1}^{\frac{1}{3}}(\xi_1-lR_{k-1}^{-\frac{1}{3}}) .
\end{align*}
This affine map satisfies $L({\tau}_{k-1}\cap\mc{M}^3(R^\b,R))=\mc{M}^3(R_{k-1}^{-\frac{1}{3}}R^\b,R_{k-1}^{-1}R)$. If we write $L^{-1}(\xi_1,\xi_2,\xi_3)=A(\xi_1,\xi_2,\xi_3)+b$ where $A$ is a linear map, then the rescaling map $T$ above is equal to $(A^{-1})^*$. In this step, we have assumed that $R_{k-1}R^{-1}\le R_{k-1}^{\frac{1}{3}}R^{-\b}$. One may then verify that $L(\g)=\underline{\g}$ are $\sim R_{k-1}^{\frac{1}{3}}R^{-\b}\times R_{k-1}^{\frac{2}{3}}R^{-2\b}\times R_{k-1}R^{-1}$ small caps partitioning $\mc{M}^3(R_{k-1}^{-\frac{1}{3}}R^\b,R_{k-1}^{-1}R)$. Apply Proposition \ref{mainprop} to the rescaled functions to obtain the inequality
\begin{align*}
    \a^{6+\frac{2}{\b'}}|\{x\in U_\a:\a\lesssim (\log R)^{k-1}R^{\e}|{g}_{\underline{\tau}_k^1}(x){g}_{\underline{\tau}_k^2}(x){g}_{\underline{\tau}_k^3}(x)|^{\frac{1}{3}},&\quad \max_{\underline{\tau}_k\subset\underline{\tau}_{k-1}}|{g}_{\underline{\tau}_k}(x)|\le \a\}| \\
    &\lesssim_\e R^{3\e+10\e} (R_{k-1}^{-1}R)^{2\b'+1}\sum_{\underline{\g}}\|g_{\underline{\g}}\|_2^2.
\end{align*} 
where $\b'\in[\frac{1}{3},1]$ is defined by $(R_{k-1}R^{-1})^{\b'}=R_{k-1}^{\frac{1}{3}}R^{-\b}$. By undoing the rescaling change of variables and summing over $\tau_{k-1}$, this implies 
\[ \a^{6+\frac{2}{\b'}}|U_\a|\lesssim_\e R^{13\e} (R_{k-1}^{-1}R)^{2\b'+1}\sum_{\g}\|f_{\g}\|_2^2 .\]
It suffices to verify that $(R_{k-1}^{-1}R)^{2\b'+1}\lessapprox \frac{R^{2\b+1}}{\a^{\frac{2}{\b}-\frac{2}{\b'}}}$. Use the upper bound $\a\lessapprox R_{k-1}^{-\frac{1}{3}}R^\b$ from the step we are considering so that it suffices to verify $(R_{k-1}^{-1}R)^{2\b'+1}(R_{k-1}^{-\frac{1}{3}}R^\b)^{\frac{2}{\b}-\frac{2}{\b'}}\lessapprox R^{2\b+1} $, which simplifies to $R_{k-1}^{-2\b'-1-\frac{2}{3\b}+\frac{2}{3\b'}}\lessapprox R^{2\b-2\b'-2+\frac{2\b}{\b'}}$. Using the definition of $\b'$, this further simplifies to $R_{k-1}^{-2\b'-1-\frac{2}{3\b}+\frac{2}{3\b'}}\lessapprox R_{k-1}^{(-\b'+\frac{1}{3})(2+\frac{2}{\b'})}$, which is true since $\b\le 2$. In this case, the algorithm terminates. 


Next, we describe step $k$ with $k\ge 2$ and $R_{k-1}^{\frac{2}{3}}\ge R^{1-\b}$. The input for step $k$ is 
\begin{align}\label{stepkbalg}
    |U_\a|\le (\log R)^{k-1}|\{x\in U_\a:\a\lesssim (\log R)^{k-1}\max_{\tau_{k-1}}|f_{{\tau}_{k-1}}(x)|\}|. 
\end{align}
Let $\{{\z}\}$ be a partition of $\mc{M}^3(R^\b,R)$ into $\sim R_{k-1}^{\frac{2}{3}}R^{-1}\times R_{k-1}^{\frac{4}{3}}R^{-2}\times R^{-1}$ small caps. By Lemma \ref{wpd}, we may assume that there are versions $\tilde{f}_{\tau_{k-1}}$ of the $f_{\tau_{k-1}}$ whose wave packets corresponding to $\z$ have been localized and pigeonholed and which satisfy
\[ |U_\a|\lesssim (\log R)^k |\{x\in U_\a:\a\lesssim (\log R)^{k} \max_{\tau_{k-1}}|\tilde{f}_{\tau_{k-1}}(x)|\}|.\]
As in the previous case, either we proceed to Step $k+1$ or we have
\[ |U_\a|\le (\log R)^kR^{3\e}\sum_{\tau_{k-1}}|\{x\in U_\a:\a\lesssim (\log R)^{k}|\tilde{f}_{{\tau}_{k}^1}(x)\tilde{f}_{\tau_k^2}(x)\tilde{f}_{\tau_k^3}(x)|^{\frac{1}{3}},\quad \max_{\tau_k\subset\tau_{k-1}}|\tilde{f}_{\tau_k}(x)|\le \a\}|. \]
By the same rescaling argument as above, let $T$ be the linear transformation so that $|\tilde{f}_{\tau_k^i}\circ T|=|g_{\underline{\tau}_k^i}|$ and the $\underline{\tau}_k^i$ are pairwise $\gtrsim E_\e R_1^{-\frac{1}{3}}$-separated blocks and $g$ is Fourier supported in the anisotropic neighborhood $\mc{M}^3(R_{k-1}^{-\frac{1}{3}}R^\b,R_{k-1}^{-1}R)$. Note that each $|\tilde{f}_{\z}\circ T|=|g_{\underline{\z}}|$ where $\underline{\z}$ is an $R_{k-1}R^{-1}\times R_{k-1}^2R^{-2}\times R_{k-1}R^{-1}$ small cap. Apply Proposition \ref{mainprop} to the rescaled functions $(\max_{\underline{\z}}\|{g}_{\underline{\z}}\|_\infty)^{-1}(g_{\underline{\tau}_k^1}+g_{\underline{\tau}_k^2}+g_{\underline{\tau}_k^3})$ to obtain the inequality
\begin{align*}
    \a^{8}|\{x\in U_\a:\a\le (\log R)^{k}|{g}_{\underline{\tau}_k^1}(x){g}_{\underline{\tau}_k^2}(x){g}_{\underline{\tau}_k^3}(x)|^{\frac{1}{3}},&\quad \max_{\underline{\tau}_k}|{g}_{\underline{\tau}_k}(x)|\le \a\}| \\
    &\lesssim_\e R^{10\e} (R_{k-1}^{-1}R)^{2(1)+1}\max_{\underline{\z}}\|g_{\underline{\z}}\|_\infty^6\sum_{\underline{\z}}\|g_{\underline{\z}}\|_2^2.
\end{align*} 
By undoing the rescaling change of variables and summing over $\tau_{k-1}$, this implies 
\[ \a^{8}|U_\a|\lesssim_\e R^{10\e} (R_{k-1}^{-1}R)^{3}(\max_{\z}\|\tilde{f}_\z\|_\infty)^6\sum_{\z}\|\tilde{f}_{\z}\|_2^2 .\]
By properties of the pigeonholing lemma, for each $\z$, $(\max_{\z}\|\tilde{f}_\z\|_\infty)^6\|\tilde{f}_{\z}\|_2^2\lesssim_\e R^{3\e}(R_{k-1}^{\frac{2}{3}}R^{-1}R^\b)^2\|f_\z\|_6^6$. By cylindrical $L^6$ decoupling (Theorem \ref{cyldec}), for each $\z$, $\|f_\z\|_6^6\lesssim_\e R^\e(\sum_{\g\subset\z}\|f_\g\|_6^2)^3\lesssim_\e R^\e(R_{k-1}^{\frac{2}{3}}R^{-1}R^\b)^2\sum_{\g\subset\z}\|f_\g\|_2^2$. The summary of step $k$ in this case is that
\[ \a^8|U_\a|\lesssim_\e R^{3\e+20\e} (R_{k-1}^{-1}R)^{3}(R_{k-1}^{\frac{2}{3}}R^{-1}R^\b)^{4}\sum_{\g}\|\tilde{f}_{\g}\|_2^2. \]
It remains to verify that $R_{k-1}^{-\frac{1}{3}}R^{4\b-1}\lessapprox\frac{R^{2\b+1}}{\a^{\frac{2}{\b}-2}}$. This is true since $R_{k-1}^{\frac{1}{3}}\ge 1$ and $\a\le R^\b$. The algorithm terminates in this case.

The final step, if the algorithm has not terminated yet, gives the case
\[ |U_\a|\lesssim (\log R)^{N}|\{x\in U_\a:\a\lesssim (\log R)^N\max_{{\tau_N}}|f_{{\tau}_N}(x)|\}|. \]
Write $\tau_N=\theta$ and use trivial inequalities: 
\begin{align*}
\a^{6+\frac{2}{\b}}|\{x\in U_\a:\a\lesssim (\log R)^{N}\max_{\theta}|f_{\theta}(x)|\}|&\lesssim_\e (\log R)^N\sum_\theta\int|f_\theta|^{6+\frac{2}{\b}} \\
&\lesssim_\e (\log R)^N\sum_\theta\max_\theta\|f_\theta\|_\infty^{4+\frac{2}{\b}}\int|f_\theta|^{2}\\
&\lesssim_\e (\log R)^N\sum_\theta\max_\theta(\#\g\subset\theta)^{4+\frac{2}{\b}}\int\sum_{\g\subset\theta}|f_\g|^{2}\\
&\lesssim_\e (\log R)^NR^{(\b-\frac{1}{2})(4+\frac{2}{\b})}\sum_\g\|f_\g\|_2^2
\end{align*}
where we used Lemma \ref{pruneprop} for the $L^\infty$ bound. Technically, our algorithm could give us a version of $f$ whose wave packets have been pigeonholed at a few scales. In that case, we incorporate an analogous process as ``unwinding the pruning" from the proof of Proposition \ref{mainprop} into the trivial argument above. Noting that $N\sim\e^{-1}$, and $(\log R)^N(\log R)^N\lesssim_\e R^\e$, we are done since $(\b-\frac{1}{2})(4+\frac{2}{\b})\le 2\b+1$, which is equivalent to $\b\le 1$. 

\end{proof}

\subsection{Proof that Theorem \ref{alphamain} implies Theorem \ref{main}}

We divide the work into two propositions. First, in Proposition \ref{critexp}, we show that Theorem \ref{alphamain} implies the critical exponent $p=6+\frac{2}{\b}$ version of Theorem \ref{main}. Then, we show that the general Theorem \ref{main} follows from the critical exponent case in Proposition \ref{genp}. 

\begin{proposition}\label{critexp} Theorem \ref{alphamain} implies Theorem \ref{main} for the critical exponent $p=6+\frac{2}{\b}$. 
\end{proposition}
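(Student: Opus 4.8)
The plan is to chain the reductions already available and close the loop with Theorem~\ref{alphamain}. By Lemma~\ref{loc} it suffices to prove the $p=p_c$ case of Theorem~\ref{main} locally on an $R^2$-ball $B_{R^2}$; by Lemma~\ref{alph} it then suffices, for the single $\a>0$ that dominates $\|f\|_{L^{p_c}(B_{R^2})}^{p_c}$, to bound $\a^{p_c}|U_\a|$ by the right-hand side of Theorem~\ref{main}; and Proposition~\ref{wpd} lets us replace $f$ on $B_{R^2}$ by a wave-packet-pigeonholed model whose surviving blocks are ``flat''. The reason the critical exponent is exactly the one that works is that at $p=p_c=6+\frac{2}{\b}$ both exponents $\b(\tfrac{p}{2}-1)$ and $\b(p-4)-1$ in Theorem~\ref{main} equal $2\b+1$, which is also the exponent appearing in Theorem~\ref{alphamain}; so converting the $\ell^2$-based superlevel-set bound into the $\ell^{p_c}$-based decoupling bound can be done with no genuine loss.

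Concretely, I would fix $B_{R^2}$ and use Lemma~\ref{alph} to choose $\a$ with $\|f\|_{L^{p_c}(B_{R^2})}^{p_c}\lesssim (\log R)\,\a^{p_c}|U_\a|+R^{-500p_c}\sum_\g\|f_\g\|_{p_c}^{p_c}$; the error term is harmless since $R^{-500p_c}\le R^{2\b+1}$, and if $\a\le C_\e(\log R)R^{-100}\max_\g\|f_\g\|_\infty$ the bound on $\a^{p_c}|U_\a|$ is trivial (as noted before Proposition~\ref{wpd}), so I may assume $\a$ exceeds that threshold. Proposition~\ref{wpd} then supplies $\mc{S}\subset\{\g\}$, $\tilde{\T}_\g\subset\T_\g$, an amplitude $A>0$, and $h:=\sum_{\g\in\mc{S}}\sum_{T\in\tilde{\T}_\g}\s_T\phi_{B}f_\g$ with $|U_\a|\lesssim(\log R)\,|\{x:\a\lesssim|h(x)|\}|$, with $\|\sum_{T\in\tilde{\T}_\g}\s_T\phi_{B}f_\g\|_\infty\sim A$ for $\g\in\mc{S}$, and with the two-sided control \eqref{Lpprop}:
\[ \#\tilde{\T}_\g\,A^p\,R^{\b+2\b+1}\ \lesssim\ \Big\|\sum_{T\in\tilde{\T}_\g}\s_T\phi_{B}f_\g\Big\|_{L^p(\R^3)}^p\ \lesssim\ R^{3p\e}\,\#\tilde{\T}_\g\,A^p\,R^{\b+2\b+1},\qquad p\ge 2. \]
Setting $g:=(cA)^{-1}h$ for a large absolute constant $c$, each block $g_\g$ (equal to $(cA)^{-1}\sum_{T\in\tilde{\T}_\g}\s_T\phi_{B}f_\g$ for $\g\in\mc{S}$ and $0$ otherwise) satisfies $\|g_\g\|_\infty\le 2$; moreover $\widehat{g}$ is supported in a bounded dilate of $\mc{M}^3(R^\b,R)$, which after splitting $\mc{S}$ into $O(1)$ subcollections — exactly as in the proof of Lemma~\ref{loc} — may be taken to be $\mc{M}^3(R^\b,R)$ itself. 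Applying Theorem~\ref{alphamain} to $g$ at threshold $\a'\sim\a/A$ gives $(\a')^{p_c}|U_{\a'}(g)|\lesssim_\e R^{O(\e)}R^{2\b+1}\sum_{\g\in\mc{S}}\|g_\g\|_2^2$.

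The last step is to undo the normalization. The $p=2$ case of \eqref{Lpprop} gives $\|g_\g\|_2^2\lesssim R^{O(\e)}\,\#\tilde{\T}_\g\,R^{\b+2\b+1}$; the $p=p_c$ case gives $\#\tilde{\T}_\g\,A^{p_c}\,R^{\b+2\b+1}\lesssim\|\sum_{T\in\tilde{\T}_\g}\s_T\phi_{B}f_\g\|_{p_c}^{p_c}\le\|\phi_{B}f_\g\|_{p_c}^{p_c}$, where the last inequality uses $0\le\sum_{T\in\tilde{\T}_\g}\s_T\le 1$ pointwise. Together with $\a^{p_c}\sim A^{p_c}(\a')^{p_c}$ this yields
\[ \a^{p_c}|U_\a|\ \lesssim\ (\log R)\,A^{p_c}(\a')^{p_c}|U_{\a'}(g)|\ \lesssim_\e\ R^{O(\e)}R^{2\b+1}\sum_{\g\in\mc{S}}A^{p_c}\|g_\g\|_2^2\ \lesssim_\e\ R^{O(\e)}R^{2\b+1}\sum_\g\|\phi_{B}f_\g\|_{p_c}^{p_c}. \]
Plugging this back into the Lemma~\ref{alph} inequality and summing over a finitely overlapping cover of $\R^3$ by $R^2$-balls — using $\sum_B\phi_{B}^{p_c}\lesssim 1$, just as in the proof of Lemma~\ref{loc} — gives $\|f\|_{L^{p_c}(\R^3)}^{p_c}\lesssim_\e R^{O(\e)}R^{2\b+1}\sum_\g\|f_\g\|_{p_c}^{p_c}$, which is Theorem~\ref{main} at $p=p_c$ after rescaling $\e$.

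The single non-bookkeeping point, and the one that ties the argument to the critical exponent, is the lossless conversion $A^{p_c}\|g_\g\|_2^2\approx\|\sum_{T\in\tilde{\T}_\g}\s_T\phi_{B}f_\g\|_{p_c}^{p_c}$ (up to $R^{O(\e)}$). It works because after Proposition~\ref{wpd} each surviving block is of height $\sim A$ on a union of $\#\tilde{\T}_\g$ congruent tubes, so once normalized all of its $L^p$ norms coincide; this is precisely what promotes the $\ell^2$ right-hand side of Theorem~\ref{alphamain} to the $\ell^{p_c}$ right-hand side of Theorem~\ref{main}. For $p\ne p_c$ the two exponents in Theorem~\ref{main} are distinct and one must interpolate between the $p_c$ estimate and a trivial $L^2$ or $L^\infty$ estimate, which is postponed to Proposition~\ref{genp}. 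I do not anticipate a real obstacle: the care needed is in tracking the power of $A$ together with the volume factor $R^{\b+2\b+1}$ consistently on the two sides, and in checking that the pigeonholed $g$ meets the ($L^\infty$-normalization and Fourier support) hypotheses of Theorem~\ref{alphamain}.
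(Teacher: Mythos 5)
Your proposal is correct and follows essentially the same route as the paper: reduce via Lemma \ref{loc} and Lemma \ref{alph} to a superlevel-set bound, pigeonhole with Proposition \ref{wpd}, apply Theorem \ref{alphamain} to the $A$-normalized pigeonholed function, and use the two-sided $L^p$ control \eqref{Lpprop} (at $p=2$ and $p=p_c$) to convert $A^{p_c-2}\|\tilde f_\g\|_2^2$ into $\|f_\g\|_{p_c}^{p_c}$, exactly as the paper does. The extra details you supply (Fourier support of the pigeonholed function lying in a bounded dilate of the caps, the summation over balls) are points the paper leaves implicit, not a different argument.
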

\begin{proof} Fix $p=6+\frac{2}{\b}$. By Lemma \ref{loc}, it suffices to bound the $L^p$ norm of $f$ on a fixed ball $B_{R^{\max(2\b,1)}}$. By Lemma \ref{alph}, there is a constant $\a>0$ (which we may assume is $\ge C_\e (\log R)R^{-100}\max_\g\|f_\g\|_\infty$) so that it suffices to bound $\a^p|U_\a|$ for $U_\a=\{x\in B_{R^{\max(2\b,1)}}:\a\le|f(x)|\}$. Finally, by Proposition \ref{wpd}, we may replace $f$ by a pigeonholed and localized version $\tilde{f}$. One of the properties of the pigeonholed version is that for all $\g$, either $\|\tilde{f}_\g\|_\infty\sim A$ or $\|\tilde{f}_\g\|_\infty=0$, for some constant $A$. 

Apply Theorem \ref{alphamain} to the function $\tilde{f}/A$ to obtain the inequality
\[ (\a/A)^p|U_\a|\lesssim_\e R^{20\e}R^{2\b+1} \sum_\g\|\tilde{f}_\g/A\|_{L^2(\R^3)}^2.  \]
It remains to note that by \eqref{Lpprop} from the pigeonholing proposition, 
\[ A^{p-2}\|\tilde{f}_\g\|_{L^2(R^{\max(2\b,1)})}^2\lesssim R^{6\e} A^p\#\tilde{\T}_\g R^{\b+2\b+1}\lesssim R^{6\e}\|\tilde{f}_\g\|_{L^p(\R^3)}^p. \]
Since $|\tilde{f}_\g|\lesssim |f_\g|$ for each $\g$, this concludes the proof. 

\end{proof}

\begin{proposition}\label{genp} Proposition \ref{critexp} implies Theorem \ref{main} . 
\end{proposition}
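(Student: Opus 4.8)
The plan is to deduce the full range $p\ge 2$ of Theorem \ref{main} from the critical exponent case $p_c=6+\frac{2}{\b}$ (Proposition \ref{critexp}) by interpolating the critical estimate with the two trivial endpoint estimates $p=2$ (from $L^2$-orthogonality) and $p=\infty$ (from the triangle inequality), keeping careful track of which of the two terms $R^{\b(\frac p2-1)}$, $R^{\b(p-4)-1}$ in the bound is dominant in each sub-range. The natural framework is the standard interpolation argument for $(\ell^p,L^p)$ decoupling constants: define $D(p)$ to be the best constant in $\|f\|_{L^p(\R^3)}^p\le D(p)\sum_\g\|f_\g\|_{L^p}^p$ over admissible $f$ (Fourier support in $\mc M^3(R^\b,R)$), and prove the two submultiplicative-type inequalities $D(p_0)\le D(p_1)^{\theta'}\big(\#\g\big)^{(1-\theta')(\cdots)}D(p_2)^{\cdots}$ coming from H\"older, together with the trivial seed values $D(2)\lesssim 1$ and $D(\infty)\le \#\g^{\,?}$. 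In fact the cleanest route is: for $2\le p\le p_c$ interpolate between $p=2$ and $p=p_c$, and for $p\ge p_c$ interpolate between $p=p_c$ and $p=\infty$.

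First I would record the two endpoint bounds precisely. For $p=2$, Plancherel (Lemma \ref{L2orth}, global form) gives $\|f\|_{L^2}^2=\sum_\g\|f_\g\|_{L^2}^2$, i.e. $D(2)\le 1$, which matches $R^{\b(\frac22-1)}=R^0$. For $p=\infty$, the triangle inequality gives $\|f\|_{L^\infty}\le\sum_\g\|f_\g\|_{L^\infty}\le\#\g\,\max_\g\|f_\g\|_{L^\infty}$, and since $\#\g\sim R^\b$ this is the ``$p=\infty$'' version of the constructive-interference term $R^{\b(p-4)-1}$ (as $p\to\infty$ the exponent $\b(p-4)-1$ divided by $p$ tends to $\b$, consistent with one factor of $R^\b$ per power). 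For $p=p_c$ we have from Proposition \ref{critexp} that $D(p_c)\lesssim_\e R^\e\big(R^{\b(\frac{p_c}2-1)}+R^{\b(p_c-4)-1}\big)$; one checks that at $p=p_c=6+\frac2\b$ the two exponents $\b(\frac{p_c}{2}-1)=2\b+1-\b=\,2\b+1$... more precisely $\b(\tfrac{p_c}{2}-1)=\b(2+\tfrac1\b)=2\b+1$ and $\b(p_c-4)-1=\b(2+\tfrac2\b)-1=2\b+1$, so the two terms coincide at the critical exponent, which is exactly why $p_c$ is the right pivot.

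Then the interpolation itself: for $2\le p\le p_c$ write $\frac1p=\frac{1-\theta}{2}+\frac{\theta}{p_c}$ and apply H\"older/interpolation of the decoupling inequality in the standard way (decompose $f=\sum_\g f_\g$, use the $\ell^2$-$L^2$ and $\ell^{p_c}$-$L^{p_c}$ inequalities, and Minkowski), yielding $D(p)\lesssim_\e R^\e D(2)^{1-\theta}D(p_c)^{\theta}\cdot(\text{a power of }\#\g)$, and then verify by a direct exponent computation that the resulting bound is $\lesssim_\e R^\e\big(R^{\b(\frac p2-1)}+R^{\b(p-4)-1}\big)$ — in this range the first term $R^{\b(\frac p2-1)}$ is the dominant one once $p\le p_c$, and it is exactly the $L^2\!-\!L^{p_c}$ interpolant of $R^0$ and $R^{2\b+1}$. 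Symmetrically, for $p_c\le p$ interpolate between $p_c$ and $\infty$: $\frac1p=\frac{1-\theta}{p_c}+\theta\cdot 0$, obtaining $D(p)\lesssim_\e R^\e\,(\#\g)^{p\theta}\,D(p_c)^{p(1-\theta)/p_c}$ after normalizing, and check this equals $\lesssim_\e R^\e R^{\b(p-4)-1}$ (now the dominant term), which is the $L^{p_c}\!-\!L^\infty$ interpolant of $R^{2\b+1}$ and $R^{\b\cdot(\text{per-cap})}$. Combining the two ranges gives $D(p)\lesssim_\e R^\e\big(R^{\b(\frac p2-1)}+R^{\b(p-4)-1}\big)$ for all $p\ge 2$, which is Theorem \ref{main}.

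The main obstacle I anticipate is purely bookkeeping rather than conceptual: making the interpolation of the $(\ell^p,L^p)$-decoupling inequality rigorous (one cannot literally interpolate the constants of a single inequality, so one argues via a normalization — assume WLOG $\sum_\g\|f_\g\|_{L^p}^p$ is fixed, split into $\|f_\g\|_{L^p}\sim$ dyadic-constant pieces by pigeonholing, reduce to the case $\|f_\g\|_{L^p}$ comparable across all nonzero $\g$, and then relate $L^p$ norms of each $f_\g$ to $L^2$ and $L^{p_c}$ via H\"older using the support localization to $B_{R^{\max(2\b,1)}}$ balls exactly as in Lemma \ref{loc} and Proposition \ref{wpd}), and then carefully matching the arithmetic of the three exponent regimes so that the ``$+$'' of the two terms in the final bound comes out correctly with only an $R^{O(\e)}$ loss. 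None of these steps is deep, but they must be done with care to avoid losing powers of $R^\b$.
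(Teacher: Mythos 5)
Your overall strategy is the right one and is, in substance, the same as the paper's: the paper also deduces the general case from the critical exponent $p_c=6+\frac{2}{\b}$ by playing it off against the two trivial inputs, $L^2$ orthogonality below $p_c$ and the triangle inequality above $p_c$, and your exponent arithmetic (including the observation that $\b(\frac{p_c}{2}-1)=\b(p_c-4)-1=2\b+1$, and that each target term is the correct ``interpolant'') is correct. The difference is in implementation: the paper never interpolates norms. After the reductions of Lemma \ref{loc}, Lemma \ref{alph} and Proposition \ref{wpd} it works with the level-set quantity $\a^p|U_\a|$ and the common wave-packet amplitude $A\sim\max_\g\|f_\g\|_\infty$; for $p\ge p_c$ it uses $\a\le\|f\|_\infty\lesssim R^\b A$ (your $L^\infty$ endpoint), and for $2\le p\le p_c$ it splits on whether $\a\gtrsim R^{\b/2}A$ (apply Proposition \ref{critexp} and convert $\|f_\g\|_{p_c}^{p_c}$ to $\|f_\g\|_p^p$ via \eqref{Lpprop}) or $\a\lesssim R^{\b/2}A$ (apply plain $L^2$ orthogonality to $\a^p|U_\a|\le\a^{p-2}\|f\|_2^2$). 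So the ``interpolation'' is carried out entirely through the amplitude $A$, with no mixed-norm interpolation machinery.

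Two concrete soft spots in your sketch. First, the per-cap conversion step: normalizing so that the $\|f_\g\|_{L^p}$ are comparable is not sufficient, and H\"older over $B_{R^{\max(2\b,1)}}$ is too lossy (it costs a factor of $(|B_{R^{\max(2\b,1)}}|/|\{|f_\g|\gtrsim \|f_\g\|_\infty\}|)^{\text{positive power}}$). For instance, two caps with $|f_{\g_1}|\sim 1$ on measure $1$ and $|f_{\g_2}|\sim t$ on measure $t^{-p}$ have equal $L^p$ norms, but $(\sum_\g\|f_\g\|_2^2)^{\frac{1-\theta}{2}}(\sum_\g\|f_\g\|_{p_c}^{p_c})^{\frac{\theta}{p_c}}$ exceeds $(\sum_\g\|f_\g\|_p^p)^{1/p}$ by an unbounded power of $t$. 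What actually closes the argument is the \emph{constant amplitude} property from Proposition \ref{wpd}, i.e.\ $\|f_\g\|_\infty\sim A$ for all contributing $\g$ together with \eqref{Lpprop}, which gives $A^{q-p}\|f_\g\|_{q}^{q}\lesssim R^{O(\e)}\|f_\g\|_p^p$ for the exponents you need; you cite Proposition \ref{wpd}, but the H\"older-over-the-ball phrasing should be replaced by this. Second, if you instead go the abstract Riesz--Thorin route for the map $(g_\g)\mapsto\sum_\g g_\g$, note that sharp Fourier projections onto the curved caps $\g$ are not uniformly bounded on $L^q$ for $q\ne 2$, so interpolating on the Fourier-support-constrained subspace of $\ell^q(L^q)$ is not automatic; one needs smooth projections onto bounded-overlap enlargements of the caps (or the hands-on amplitude argument above, which avoids the issue entirely). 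With either fix the proof goes through and matches the paper's bounds up to $R^{O(\e)}$.
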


\begin{proof}[Proposition \ref{critexp} implies Theorem \ref{main}] Let $p\ge 2$. Repeat the initial steps in the proof of Proposition \ref{critexp} so that it suffices to prove 
\[ \a^p|U_\a|\lesssim_\e R^\e (R^{\b(\frac{p}{2}-1)}+R^{\b(p-4)-1})\sum_\g\|f_\g\|_{L^p(\R^3)}^p\]
where $f$ has been pigeonholed and localized as in Proposition \ref{wpd}. First suppose that $2\le p\le 6+\frac{2}{\b}$. By Proposition \ref{critexp}, we have
\[ \a^{6+\frac{2}{\b}}|U_\a|\lesssim_\e R^\e R^{2\b+1}\sum_\g\|f_\g\|_{L^{6+\frac{2}{\b}}(\R^3)}^{6+\frac{2}{\b}}  . \]
Write $A\sim\max_\g\|f_\g\|_\infty$. We would be done if $R^{2\b+1}A^{6+\frac{2}{\b}-p}\lesssim R^{\b(\frac{p}{2}-1)}\a^{6+\frac{2}{\b}-p}$, which simplifies to $R^{\frac{\b}{2}}A\lesssim \a$. If this does not hold, then using $L^2$ orthogonality, 
\[\a^p|U_\a|\lesssim R^{\b(\frac{p}{2}-1)}A^{p-2}\sum_\g\|f_\g\|_2^2.  \]
By \eqref{Lpprop}, $A^{p-2}\|f_{\g}\|_2^2\lesssim R^{3\e}\|f_\g\|_p^p$, which finishes this case.

Next, assume that $6+\frac{2}{\b}\le p$. Then by Proposition \ref{critexp}, 
\[ \a^p|U_\a|\lesssim_\e R^\e  R^{2\b+1}\sum_\g\a^{p-6-\frac{2}{\b}}\|f_\g\|_{6+\frac{2}{\b}}^{6+\frac{2}{\b}}. \]
We would be done if $ R^{2\b+1}\a^{p-6-\frac{2}{\b}}\lesssim R^{\b(p-4)-1}A^{p-6-\frac{2}{\b}}$, which simplifies to $\a\lesssim R^\b A$. Since $\a\lesssim |f(x)|=|\sum_\g f_\g(x)|$ and $\#\g\lesssim R^\b$, this is true. 

\end{proof}

\bibliographystyle{alpha}
\bibliography{AnalysisBibliography}

\begin{thebibliography}{GMW20}

\bibitem[BD15]{BD}
Jean Bourgain and Ciprian Demeter.
\newblock The proof of the {$l^2$} decoupling conjecture.
\newblock {\em Ann. of Math. (2)}, 182(1):351--389, 2015.

\bibitem[BDG16]{bdg}
Jean Bourgain, Ciprian Demeter, and Larry Guth.
\newblock Proof of the main conjecture in {V}inogradov's mean value theorem for
  degrees higher than three.
\newblock {\em Ann. of Math. (2)}, 184(2):633--682, 2016.

\bibitem[BI86]{bombiwa}
E.~Bombieri and H.~Iwaniec.
\newblock Some mean-value theorems for exponential sums.
\newblock {\em Ann. Scuola Norm. Sup. Pisa Cl. Sci. (4)}, 13(3):473--486, 1986.

\bibitem[Bou17a]{smallcapb2}
J.~Bourgain.
\newblock Decoupling, exponential sums and the {R}iemann zeta function.
\newblock {\em J. Amer. Math. Soc.}, 30(1):205--224, 2017.

\bibitem[Bou17b]{smallcapb1}
Jean Bourgain.
\newblock Decoupling inequalities and some mean-value theorems.
\newblock {\em J. Anal. Math.}, 133:313--334, 2017.

\bibitem[DGW20]{smallcap}
Ciprian Demeter, Larry Guth, and Hong Wang.
\newblock Small cap decouplings.
\newblock {\em Geom. Funct. Anal.}, 30(4):989--1062, 2020.
\newblock With an appendix by D. R. Heath-Brown.

\bibitem[GM22]{ampdep}
Larry Guth and Dominique Maldague.
\newblock Amplitude dependent wave envelope estimates for the cone in
  $\mathbb{R}^3$, 2022.

\bibitem[GMW20]{gmw}
Larry Guth, Dominique Maldague, and Hong Wang.
\newblock Improved decoupling for the parabola, 2020.

\bibitem[Jun20]{hjung}
Hongki Jung.
\newblock A sharp $l^{10}$ decoupling for the twisted cubic, 2020.

\bibitem[Woo16]{woocubic}
Trevor~D. Wooley.
\newblock The cubic case of the main conjecture in {V}inogradov's mean value
  theorem.
\newblock {\em Adv. Math.}, 294:532--561, 2016.

\end{thebibliography}

\end{document}